\newcommand{\res}{\!\!\mathop{\hbox{
			\vrule height 7pt width .5pt depth 0pt
			\vrule height .5pt width 6pt depth 0pt}}
	\nolimits}
\def\z{{\bf z}}
\newtheorem{theorem}{Theorem}[section]
\newtheorem{definition}[theorem]{Definition}
\newtheorem{proposition}[theorem]{Proposition}
\newtheorem{corollary}[theorem]{Corollary}
\newtheorem{remark}[theorem]{Remark}
\newtheorem{example}[theorem]{Example}
\newtheorem*{theorem*}{\it Theorem}
\def\vint_#1{\mathchoice%
	{\mathop{\kern 0.2em\vrule width 0.6em height 0.69678ex depth -0.58065ex
			\kern -0.8em \intop}\nolimits_{\kern -0.4em#1}}%
	{\mathop{\kern 0.1em\vrule width 0.5em height 0.69678ex depth -0.60387ex
			\kern -0.6em \intop}\nolimits_{#1}}%
	{\mathop{\kern 0.1em\vrule width 0.5em height 0.69678ex
			depth -0.60387ex
			\kern -0.6em \intop}\nolimits_{#1}}%
	{\mathop{\kern 0.1em\vrule width 0.5em height 0.69678ex depth -0.60387ex
			\kern -0.6em \intop}\nolimits_{#1}}}
\def\vintslides_#1{\mathchoice%
	{\mathop{\kern 0.1em\vrule width 0.5em height 0.697ex depth -0.581ex
			\kern -0.6em \intop}\nolimits_{\kern -0.4em#1}}%
	{\mathop{\kern 0.1em\vrule width 0.3em height 0.697ex depth -0.604ex
			\kern -0.4em \intop}\nolimits_{#1}}%
	{\mathop{\kern 0.1em\vrule width 0.3em height 0.697ex depth -0.604ex
			\kern -0.4em \intop}\nolimits_{#1}}%
	{\mathop{\kern 0.1em\vrule width 0.3em height 0.697ex depth -0.604ex
			\kern -0.4em \intop}\nolimits_{#1}}}
\def\R{\mathbb R}
\def\N{\mathbb N}
\numberwithin{equation}{section}
\def\RR{{\mathbb{R}}}
\def\NN{{\mathbb{N}}}
\def\D{\displaystyle}
\def\1{\raisebox{2pt}{\rm{$\chi$}}}
\def\Xint#1{\mathchoice
	{\XXint\displaystyle\textstyle{#1}}%
	{\XXint\textstyle\scriptstyle{#1}}%
	{\XXint\scriptstyle\scriptscriptstyle{#1}}%
	{\XXint\scriptscriptstyle\scriptscriptstyle{#1}}%
	\!\int}
\def\XXint#1#2#3{{\setbox0=\hbox{$#1{#2#3}{\int}$}
		\vcenter{\hbox{$#2#3$}}\kern-.5\wd0}}
\def\dashint{\Xint-}
\def\Xint#1{\mathchoice
	{\XXint\displaystyle\textstyle{#1}}%
	{\XXint\textstyle\scriptstyle{#1}}%
	{\XXint\scriptstyle\scriptscriptstyle{#1}}%
	{\XXint\scriptscriptstyle\scriptscriptstyle{#1}}%
	\!\int}
\def\XXint#1#2#3{{\setbox0=\hbox{$#1{#2#3}{\int}$}
		\vcenter{\hbox{$#2#3$}}\kern-.5\wd0}}
\def\dashint{\Xint-}
\newcommand{\threepartdef}[6]
{
	\left\{
	\begin{array}{lll}
		#1 & \mbox{if } #2 \\
		#3 & \mbox{if } #4 \\
		#5 & \mbox{if } #6
	\end{array}
	\right.
}
\definecolor{violet(ryb)}{rgb}{0.53, 0.0, 0.69}
\begin{document}
	
\title[Least Gradient Functions]{\bf Least Gradient Functions in  Metric Random Walk Spaces}
	
\author[W. G\'{o}rny and J. M. Maz\'on]{Wojciech G\'{o}rny and Jos\'e M. Maz\'on}
	
\address{ W. G\'{o}rny: Faculty of Mathematics, Informatics and Mechanics, University of Warsaw, Banacha 2, 02-097 Warsaw, Poland,
		\hfill\break\indent
		{\tt  w.gorny@mimuw.edu.pl }}
	
\address{J. M. Maz\'{o}n: Departamento de An\`{a}lisis Matem\`atico,
		Universitat de Val\`encia, Dr. Moliner 50, 46100 Burjassot, Spain.
		\hfill\break\indent
		{\tt mazon@uv.es }}
	
%
%

\keywords{Random walk, Least gradient functions, Total variation flow, Functions of bounded variation.\\
\indent 2010 {\it Mathematics Subject Classification:} 05C81, 35R02, 26A45, 05C21, 45C99.}
	
\setcounter{tocdepth}{1}
	
%

\begin{abstract}
In this paper we study least gradient functions in metric random walk spaces, which include as particular cases the least gradient functions on locally finite weighted connected graphs and nonlocal least gradient functions on $\mathbb{R}^N$. Assuming that a Poincar\'{e} inequality is satisfied, we study the Euler-Lagrange equation associated with the least gradient problem. We also prove the Poincar\'e inequality in a few settings.
\end{abstract}
	
\maketitle
%
%
%
%
{\renewcommand\contentsname{Contents }
	\setcounter{tocdepth}{3}
	\tableofcontents
}

\section{Introduction and Preliminaries}
	
A  metric random walk space $[X,d,m]$ is a Polish metric space  $(X,d)$ together with a family  $m = (m_x)_{x \in X}$ of probability measures that encode the jumps of a Markov chain. Important examples of metric random walk spaces are: locally finite weighted connected graphs, finite Markov chains and $[\R^N, d, m^J]$ with $d$ the Euclidean distance and
$$m^J_x(A) :=  \int_A J(x - y) d\mathcal{L}^N(y) \quad \hbox{ for every Borel set } A \subset  \R^N \ ,$$
where $J:\R^N\to[0,+\infty[$ is a measurable, nonnegative and radially symmetric function with $\int_{\mathbb{R}^N} J \, dx = 1$.

\medskip

In the Euclidean space, least gradient problems are closely related to the study of minimal surfaces. They first appeared in the pioneering work of Bombieri et al. \cite{BdGG}, where the authors show that the boundaries of superlevel sets of a function of least gradient are area-minimizing in the sense that the characteristic functions of those sets are also functions of least gradient (see also \cite{Mir} for stability results on functions of least gradient). Conversely, Sternberg, Williams, and Ziemer proved in \cite{SWZ} (see also \cite{SZ1}) existence of a function of least gradient with a given continuous trace by explicitly constructing each of its superlevel sets in such a way that they are area-minimizing and reflect the boundary condition. Due to their important applications in conductivity imaging, such problems (including anisotropic cases) have received extensive attention in the past decade (see for instance: \cite{Gor}, \cite{Gor2}, \cite{JMN}, \cite{MazRoSe}, \cite{Moradifam}, \cite{MNT}, \cite{NTT}).  In particular, in \cite{MazRoSe} the authors give a formulation of the problem of Euler-Lagrange type, where the structure of the solution is governed by a single divergence-measure vector field. We will utilise a similar approach in the nonlocal setting.

To our knowledge, the only results on least gradient problems for nonlocal operators are to one obtained in \cite{MazPeRoTo}. The aim of this paper is to study least gradient functions in the general setting of metric random walk spaces. As a particular case, we obtain results in the nonlocal least gradient problem on graphs. Moreover, some results obtained in this paper are new also in the context of  nonlocality defined by a continous nonnegative kernel on a Euclidean space.

The paper is organized as follows: in the first part of the paper, we provide the Euler-Lagrange formulation associated with the least gradient problem in general metric random walk spaces. Under the assumption that a nonlocal Poincar\'{e} type inequality is satified, we prove equivalence of the Euler-Lagrange equations with the variational formulation and study some properties of solutions to the least gradient problem in this setting. In the second part of the paper, we prove that such Poincar\'{e} inequality holds in many important examples of metric random walk spaces, such as the $\epsilon$-step random walk in weighted Euclidean spaces or Carnot-Carath\'eodory spaces and on locally finite graphs.  Moreover, we show that if the space does not satisfy the Poincar\'e inequality, then solutions in the Euler-Lagrange sense might not exist.

\subsection{Metric Random Walk Spaces}
	
Let $(X,d)$ be a  Polish metric  space equipped with its Borel $\sigma$-algebra.
A {\it random walk} $m$ on $X$ is a family of probability measures $m_x$ on $X$, $x \in X$, satisfying the two technical conditions: (i) the measures $m_x$  depend measurably on the point  $x \in X$, i.e., for any borelian  $A$ of $X$ and any borelian  $B$ of $\R$, the set $\{ x \in X \ : \ m_x(A) \in B \}$ is borelian; (ii) each measure $m_x$ has finite first moment, i.e. for some (hence any) $z \in X$, and for any $x \in X$ one has $\int_X d(z,y) dm_x(y) < +\infty$ (see~\cite{O}).
	
A {\it metric random walk  space} $[X,d,m]$ is a Polish metric space $(X,d)$ with a  random walk $m$.
	
Let $[X,d,m]$ be a metric random walk  space. A Radon measure $\nu$ on $X$ is {\it invariant} for the random walk $m=(m_x)$ if
$$d\nu(x)=\int_{y\in X}d\nu(y)dm_y(x),$$
that is, for any $\nu$-measurable set $A$, it holds that $A$ is $m_x$-measurable for $\nu$-almost all $x\in X$,  $\displaystyle x\mapsto  m_x(A)$ is $\nu$-measurable, and
$$\nu(A)=\int_X m_x(A)d\nu(x).$$
Hence, for any $u \in L^1(X, \nu)$, it holds that $u \in L^1(X, m_x)$ for $\nu$-a.e. $x \in X$, $\displaystyle x\mapsto \int_X u(y) d{m_x}(y)$ is $\nu$-measurable, and
$$\int_X u(x) d\nu(x) = \int_X \left(\int_X u(y) d{m_x}(y) \right)d\nu(x).$$
The measure $\nu$ is said to be {\it reversible} if, moreover, a more detailed balance condition
$$dm_x(y)d\nu(x)  = dm_y(x)d\nu(y)$$
holds true. Under suitable assumptions on the metric random walk space $[X,d,m]$, such an invariant and reversible measure $\nu$ exists and is unique, as we will see below. Note that the reversibility condition implies the invariance condition.
	
We will assume that the metric measure space $(X,d,\nu)$ is $\sigma$-finite.
	
\begin{example}\label{JJ}{\rm
(1) Consider $(\R^N, d, \mathcal{L}^N)$, with $d$ the Euclidean distance and $\mathcal{L}^N$ the Lebesgue measure. Let  $J:\R^N\to[0,+\infty[$ be a measurable, nonnegative and radially symmetric function verifying $\int_{\R^N}J(z)dz=1$. In $(\R^N, d, \mathcal{L}^N)$ we have the following random walk, starting at $x$,
$$m^J_x(A) :=  \int_A J(x - y) d\mathcal{L}^N(y) \quad \forall A \subset  \R^N \ \hbox{borelian}.$$
Applying Fubini's Theorem it is easy to see that the Lebesgue measure $\mathcal{L}^N$ is an invariant and reversible measure for this random walk. Hence, $[X, d, m^J]$ is a random walk space.
			
\noindent (2)  Let $K: X \times X \rightarrow \R$ be a Markov kernel on a countable space $X$, i.e.
$$K(x,y) \geq 0, \quad \forall x,y \in X, \quad \quad \sum_{y\in X} K(x,y) = 1 \quad \forall x \in X.$$
Then, for $$m^K_x(A):= \sum_{y \in A} K(x,y),$$
$[X, d, m^K]$ is a metric random walk space for  any  metric  $d$ on $X$. Basic Markov chain theory guarantees the existence of a unique  stationary probability measure (also called steady state) $\pi$ on $X$, that is,
$$\sum_{x \in X} \pi(x) = 1 \quad \hbox{and} \quad \pi(y) = \sum_{x \in X} \pi(x) K(x,y) \quad \quad \forall y \in X.$$
We say that $\pi$ is reversible for $K$ if the following detailed balance equation
$$K(x,y) \pi(x) = K(y,x) \pi(y)$$ holds true for $x, y \in X$.
			
\noindent (3) Consider  a locally finite weighted discrete graph $G = (V(G), E(G))$, where each edge $(x,y) \in E(G)$ (we will write $x\sim y$ if $(x,y) \in E(G)$) has a positive weight $w_{xy} = w_{yx}$ assigned. Suppose further that $w_{xy} = 0$ if $(x,y) \not\in E(G)$. The graph is equipped with the standard shortest path graph distance $d_G$, that is, $d_G(x,y)$ is the  minimal number of edges connecting $x$ and $y$.  We will assume that any two point are connected, i.e., that the graph is connected. For $x \in V(G)$ we define the weight at the vertex $x$ as
$$d_x:= \sum_{y\sim x} w_{xy} = \sum_{y\in V(G)} w_{xy},$$
and the neighbourhood $N_G(x) := \{ y \in V(G) \ : \ x\sim y\}$. The graph is locally  finite, i.e. the sets $N_G(x)$ are assumed to be finite. When all the weights are $1$, $d_x$ coincides with the degree of the vertex $x$ in a graph, that is,  the number of edges containing vertex $x$.
			
For each $x \in V(G)$  we define the following probability measure
$$m^G_x=  \frac{1}{d_x}\sum_{y \sim x} w_{xy}\,\delta_y.\\ \\
$$
We have that $[V(G), d_G, m^G]$ is a metric random walk space. It is not difficult to see that the measure $\nu_G$ defined as
$$\nu_G(A):= \sum_{x \in A} d_x,  \quad A \subset V(G)$$
is an invariant and  reversible measure for this random walk.
			
\noindent (4) From a metric measure space $(X,d, \mu)$ we can obtain a metric random walk space with the so called {\it $\epsilon$-step random walk associated to $\mu$}, as follows. Assume that balls in $X$ have finite measure and that ${\rm Supp}(\mu) = X$. Given $\epsilon > 0$, the $\epsilon$-step random walk on $X$, starting at point~$x$, consists in randomly jumping in the ball of radius $\epsilon$ around $x$, with probability proportional to $\mu$; namely
$$m^{\mu,\epsilon}_x:= \frac{\mu \res B(x, \epsilon)}{\mu(B(x, \epsilon))}.$$
Note that $\mu$ is an invariant and reversible measure for the metric random walk $[X, d, m^{\mu,\epsilon}]$.
			
\noindent (5) Given a  metric random walk  space $[X,d,m]$ with invariant and reversible measure $\nu$ for $m$, and given a $\nu$-measurable set $\Omega \subset X$ with $\nu(\Omega) > 0$, if we define, for $x\in\Omega$,
$$m^{\Omega}_x(A):=\int_A d m_x(y)+\left(\int_{X\setminus \Omega}d m_x(y)\right)\delta_x(A) \quad \forall A \subset  \Omega \ \hbox{borelian},$$
we have that $[\Omega,d,m^{\Omega}]$ is a metric random walk space and it easy to see (\cite{MST0}) that $\nu \res \Omega$ is  reversible for $m^{\Omega}$.
			
In the case that $\Omega$ is a closed bounded subset of $\R^N$, if we consider the metric random walk $[\Omega, d, m^{J,\Omega}]$, being $m^{J,\Omega} = (m^J)^{\Omega}$, that is
$$m^{J,\Omega}_x(A):=\int_A J(x-y)dy+\left(\int_{\R^n\setminus \Omega}J(x-z)dz\right)d\delta_x \quad \forall A \subset  \Omega \ \hbox{borelian},$$
}
\end{example}
	
From now on, we will assume that $[X,d,m]$ is a metric random walk space with an invariant and reversible measure $\nu$.
	
Let us recall some of the results about the $m$-Perimeter and the $m$-Total Variation given in \cite{MST1}.
	
\subsection{$m$-Perimeter}
	
In this context, the {\it $m$-interaction} between two $\nu$-measurable subsets $A$ and $B$ of $X$  is defined as
$$ L_m(A,B):= \int_A \int_B dm_x(y) d\nu(x).
$$
Whenever  $L_m(A,B) < +\infty$, by the reversibility assumption on $\nu$ with respect to $m$, we have
$$L_m(A,B)=L_m(B,A).$$
We define the concept of $m$-perimeter of a $\nu$-measurable subset $E \subset X$ as
$$P_m(E)=L_m(E,X\setminus E) = \int_E \int_{X\setminus E} dm_x(y) d\nu(x).$$
It is easy to see that
$$P_m(E) = \frac{1}{2} \int_{X}  \int_{X}  \vert \1_{E}(y) - \1_{E}(x) \vert dm_x(y) d\nu(x).
$$
Moreover,  if $E$ is $\nu$-integrable, we have
$$\displaystyle P_m(E)=\nu(E) -\int_E\int_E dm_x(y) d\nu(x).
$$

\begin{example}{\rm
(1) Let  $[\R^N, d, m^J]$ be   the metric random walk space given in Example   \ref{JJ} (1) with invariant measure $\mathcal{L}^N$. Then,
$$P_{m^J} (E) = \frac{1}{2} \int_{\R^N}  \int_{\R^N}  \vert \1_{E}(y) - \1_{E}(x) \vert J(x -y) dy dx,$$
which coincides with the concept of $J$-perimeter introduced in \cite{MRT1} (see also \cite{MRTLibro}). On the other hand,
$$P_{ m^{J,\Omega}} (E) = \frac{1}{2} \int_{\Omega}  \int_{\Omega}  \vert \1_{E}(y) - \1_{E}(x) \vert J(x -y) dy dx.$$
Note that, in general, $P_{ m^{J,\Omega}} (E) \not= P_{m^J} (E).$
			
Moreover,
$$P_{ m^{J,\Omega}} (E) = \mathcal{L}^N(E) - \int_E \int_E dm_x^{J,\Omega}(y) dx = $$
$$ = \mathcal{L}^N(E) - \int_E \int_E J(x-y) dy dx - \int_E \left( \int_{\R^N \setminus \Omega} J(x - z) dz\right) dx.$$
Therefore,
$$P_{ m^{J,\Omega}} (E) = P_{ m^{J}} (E)  - \int_E \left( \int_{\R^N \setminus \Omega} J(x - z) dz\right) dx, \quad \forall \, E \subset \Omega.$$
			
\noindent (2)
In the particular case of  a graph $[V(G), d_G, m^G ]$, given $A, B \subset V(G)$, ${\rm Cut}(A,B)$ is defined as
$${\rm Cut}(A,B):= \sum_{x \in A, y \in B} w_{xy} = L_{m^G}(A,B),$$
and the perimeter of a set $E \subset V(G)$ is given by
$$\vert \partial E \vert := {\rm Cut}(E,E^c) = \sum_{x \in E, y \in V \setminus E} w_{xy}.$$
Then, we have that
$$ \vert \partial E \vert = P_{m^G}(E) \quad \hbox{for all} \ E \subset V(G). $$
}
\end{example}
	
\subsection{$m$-Total Variation}
Associated to the  random walk $m=(m_x)$ and   $\nu$,  we define the space
$$BV_m(X):= \left\{ u :X \to \R \ \hbox{ $\nu$-measurable} \, : \, \int_{X}  \int_{X}  \vert u(y) - u(x) \vert dm_x(y) d\nu(x) < \infty \right\}.$$
We have   $L^1(X,\nu)\subset BV_m(X)$. For $u\in BV_m(X)$,  we  define its {\it $m$-total variation} as
$$TV_m(u):= \frac{1}{2} \int_{X}  \int_{X}  \vert u(y) - u(x) \vert dm_x(y) d\nu(x).$$
Note that
$$P_m(E) = TV_m(\1_E).$$
Recall the definition of the generalized product measure $\nu \otimes m_x$ (see, for instance, \cite{AFP}), which is defined as the measure in $X \times X$ given by
$$ \nu \otimes m_x(U) := \int_X   \int_X \1_{U}(x,y) dm_x(y)   d\nu(x)\quad\hbox{for }  U\in \mathcal{B}(X\times X),$$  where one needs the map $x \mapsto m_x(E)$ to be $\nu$-measurable for any Borel set $E \in \mathcal{B}(X)$.
It holds that $$\int_{X \times X} g  d(\nu \otimes m_x)   = \int_X   \int_X g(x,y) dm_x(y)  d\nu(x)$$
for every  $g\in L^1(X\times X,\nu\otimes m_x)$. Therefore, we can write
$$TV_m(u)= \frac{1}{2} \int_{X}  \int_{X}  \vert u(y) - u(x) \vert d(\nu \otimes m_x)(x,y).$$
	
\begin{example}\label{exammplee}{\rm Let $[V(G), d_G, m^G]$  be the metric random walk  given in Example \ref{JJ} (3) with the invariant and reversible  measure $\nu_G$. Then
$$TV_{m^G} (u) = \frac{1}{2} \int_{V(G)}  \int_{V(G)}  \vert u(y) - u(x) \vert dm^G_x(y) d\nu_G(x) $$ $$= \frac{1}{2} \int_{V(G)} \frac{1}{d_x} \left(\sum_{y \in V(G)} \vert u(y) - u(x) \vert w_{xy}\right) d\nu_G(x)$$ $$=  \frac{1}{2} \sum_{x \in V(G)} d_x \left(\frac{1}{d_x} \sum_{y \in V(G)} \vert u(y) - u(x) \vert w_{xy}\right) $$ $$= \frac{1}{2}  \sum_{x \in V(G)} \sum_{y \in V(G)} \vert u(y) - u(x) \vert w_{xy}.$$
Note that $TV_{m^G} (u)$ coincides with the anisotropic total variation	defined in \cite{GGOB}.}
\end{example}
	
Let us now recall some properties of the $m$-total variation given in \cite{MST1}.
	
\begin{proposition}\label{lemLipsch} If $\phi : \R \rightarrow \R$ is Lipschitz continuous  then, for every $u \in BV_m(X)$, $\phi(u) \in BV_m(X)$ and
$$TV_m(\phi(u)) \leq \Vert \phi \Vert_{Lip} TV_m(u).$$
\end{proposition}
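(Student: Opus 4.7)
The plan is to reduce the statement to the definition of $TV_m$ via the pointwise Lipschitz bound. The starting observation is that, since $u:X\to\R$ is $\nu$-measurable and $\phi:\R\to\R$ is continuous (being Lipschitz), the composition $\phi(u):X\to\R$ is $\nu$-measurable, so it makes sense to talk about $\phi(u)\in BV_m(X)$.

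Next I would apply the Lipschitz property of $\phi$ pointwise on $X\times X$: for every pair $(x,y)$,
\[
|\phi(u(y))-\phi(u(x))|\le \|\phi\|_{Lip}\,|u(y)-u(x)|.
\]
Both sides are nonnegative and $(\nu\otimes m_x)$-measurable on $X\times X$ (the right-hand side because $u$ is $\nu$-measurable and $\nu\otimes m_x$ is the generalized product measure introduced before the statement, and the left-hand side by the same reasoning applied to $\phi(u)$). Integrating this inequality against $\tfrac{1}{2}\,d(\nu\otimes m_x)$ and using the representation
\[
TV_m(v)=\tfrac12\int_{X}\int_{X}|v(y)-v(x)|\,dm_x(y)\,d\nu(x)=\tfrac12\int_{X\times X}|v(y)-v(x)|\,d(\nu\otimes m_x)
\]
valid for any $\nu$-measurable $v$, I obtain
\[
\tfrac12\int_{X\times X}|\phi(u(y))-\phi(u(x))|\,d(\nu\otimes m_x)\ \le\ \|\phi\|_{Lip}\,TV_m(u).
\]
The right-hand side is finite by the assumption $u\in BV_m(X)$, so the left-hand side is finite, which says exactly that $\phi(u)\in BV_m(X)$ and $TV_m(\phi(u))\le \|\phi\|_{Lip}\,TV_m(u)$.

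There is essentially no obstacle here: the only thing to be slightly careful about is the measurability of the integrand $|\phi(u(y))-\phi(u(x))|$ with respect to $\nu\otimes m_x$, which follows from the hypothesis that $x\mapsto m_x(E)$ is $\nu$-measurable for every Borel set $E$ (part of the definition of metric random walk space recalled earlier) together with continuity of $\phi$. Everything else is a one-line monotonicity of the integral.
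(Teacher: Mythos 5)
Your proof is correct. The paper does not actually prove this proposition; it only recalls it from \cite{MST1}, so there is no internal argument to compare against. Your route --- the pointwise bound $|\phi(u(y))-\phi(u(x))|\le \Vert\phi\Vert_{Lip}|u(y)-u(x)|$ followed by monotone integration against $\tfrac12\,d(\nu\otimes m_x)$, with finiteness of the right-hand side giving both $\phi(u)\in BV_m(X)$ and the claimed inequality --- is the standard and complete argument, and your remark on measurability of the integrand is the only point that needed attention.
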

	
\begin{proposition}\label{lemsc1}
$TV_m$ is convex and continuous in $L^1(X, \nu)$ and lower semi-continuous respect to the weak convergence in $L^q(X,\nu)$, $q = 1,2$.
\end{proposition}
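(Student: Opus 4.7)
The plan is to handle the three assertions in turn. Convexity of $TV_m$ is immediate from the triangle inequality applied inside the double integral: $|(\lambda u+(1-\lambda)v)(y)-(\lambda u+(1-\lambda)v)(x)|\le \lambda|u(y)-u(x)|+(1-\lambda)|v(y)-v(x)|$, which one integrates against $\nu\otimes m_x$ to obtain $TV_m(\lambda u+(1-\lambda)v)\le \lambda TV_m(u)+(1-\lambda)TV_m(v)$.

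For continuity on $L^1(X,\nu)$, the aim is the Lipschitz estimate $|TV_m(u)-TV_m(v)|\le TV_m(u-v)\le \|u-v\|_{L^1(X,\nu)}$. The first inequality is the reverse triangle inequality applied pointwise under the double integral defining $TV_m$. For the second, I would bound $|w(y)-w(x)|\le |w(y)|+|w(x)|$ and split the double integral: the $|w(x)|$-piece gives $\tfrac12\|w\|_{L^1(X,\nu)}$ since each $m_x$ is a probability measure, while the $|w(y)|$-piece equals $\tfrac12\int_X|w(y)|\,d\nu(y)$ by the invariance identity $\int_X\!\int_X f(y)\,dm_x(y)\,d\nu(x)=\int_X f(y)\,d\nu(y)$.

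For lower semicontinuity under weak convergence in $L^q(X,\nu)$ with $q\in\{1,2\}$, I would reduce via Mazur's theorem to the strong case: since $TV_m$ is convex, its sublevel sets $\{TV_m\le c\}$ are convex, so strong closedness in the Banach space $L^q(X,\nu)$ coincides with weak closedness, and this promotes strong sequential lower semicontinuity to weak sequential lower semicontinuity. Strong lower semicontinuity on $L^1$ is already contained in the continuity just established. For $L^2$, given a sequence $u_n\to u$ strongly, I would extract a subsequence realising $\liminf_n TV_m(u_n)$, then a further subsequence converging $\nu$-a.e.\ to $u$, and apply Fatou's lemma to the nonnegative integrand $(x,y)\mapsto|u_{n_{k_j}}(y)-u_{n_{k_j}}(x)|$ on $X\times X$ against the measure $\nu\otimes m_x$.

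The delicate point is transferring $\nu$-a.e.\ convergence of the scalar sequence $u_n$ to $(\nu\otimes m_x)$-a.e.\ convergence of $(x,y)\mapsto u_n(y)$, and this is exactly where the invariance hypothesis is essential: if $B=\{y:u_n(y)\not\to u(y)\}$ is $\nu$-null, then $(\nu\otimes m_x)(X\times B)=\int_X m_x(B)\,d\nu(x)=\nu(B)=0$ by the invariance identity $\nu(A)=\int_X m_x(A)\,d\nu(x)$; the dependence in the $x$-variable is trivial. Without invariance one could have $m_x$ charging $\nu$-null sets and the Fatou step would fail; this is the main conceptual obstacle, and it is precisely the reason the invariance assumption on $\nu$ is built into the setup from the outset.
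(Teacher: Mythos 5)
The paper does not actually prove this proposition: it is recalled without proof from the reference [MST1] (``Let us now recall some properties of the $m$-total variation given in \cite{MST1}''), so there is no in-paper argument to compare against. Your proof is correct and complete on its own terms. Convexity from the pointwise triangle inequality is fine; the Lipschitz bound $|TV_m(u)-TV_m(v)|\le TV_m(u-v)\le \|u-v\|_{L^1(X,\nu)}$ is exactly the right quantitative form of continuity (and also re-derives the inclusion $L^1(X,\nu)\subset BV_m(X)$ stated in the paper), with the invariance of $\nu$ used correctly to evaluate the $|w(y)|$-piece; and the Mazur reduction of weak sequential lower semicontinuity to strong lower semicontinuity of a convex functional is standard and valid. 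Your observation that invariance is what transfers a $\nu$-null set $B$ to the $(\nu\otimes m_x)$-null set $X\times B$ is the genuinely delicate point and you handle it correctly. Two small remarks. First, for $q=2$ you could bypass Fatou entirely in the paper's Section~2 setting where $\nu(X)<\infty$, since there strong $L^2$ convergence implies strong $L^1$ convergence and continuity already gives lower semicontinuity; your Fatou argument is the more robust one for the merely $\sigma$-finite setting of Section~1. Second, an alternative route to weak lower semicontinuity that the paper itself makes available is the duality formula $TV_m(u)=\sup\{\int_X u\,({\rm div}_m\z)\,d\nu : \z\in X_m^p(X),\ \|\z\|_\infty\le 1\}$ from the subsequent proposition: a supremum of functionals that are continuous for the weak $L^q$ topology is automatically weakly lower semicontinuous, which trades your Mazur/Fatou steps for the (nontrivial) proof of the duality identity.
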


As in the local and nonlocal case, we have the following coarea formula relating the total variation of a function with the perimeter of its superlevel sets.
	
\begin{theorem}[\bf Coarea formula]
For any $u \in L^1(X,\nu)$, let $E_t(u):= \{ x \in X \ : \ u(x) > t \}$. Then,
$$TV_m(u) = \int_{-\infty}^{+\infty} P_m(E_t(u))\, dt. $$
\end{theorem}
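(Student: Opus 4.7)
The plan is to reduce the coarea formula to a pointwise layer-cake identity and then apply Tonelli's theorem in the non-negative triple integral obtained from the definition of $TV_m$.

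The first step is to establish the pointwise identity
\[
|u(y)-u(x)| \ = \ \int_{-\infty}^{+\infty} \bigl|\1_{E_t(u)}(y)-\1_{E_t(u)}(x)\bigr|\, dt
\]
for every pair $x,y\in X$. This is elementary: if, say, $u(x)\le u(y)$, then $|\1_{E_t(u)}(y)-\1_{E_t(u)}(x)|=1$ exactly for those $t\in [u(x),u(y))$, so the right-hand side equals $u(y)-u(x)=|u(y)-u(x)|$; the case $u(x)>u(y)$ is symmetric.

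The second step is to insert this identity into the definition of $TV_m(u)$ and exchange the order of integration. One would write
\[
TV_m(u) \ = \ \frac{1}{2}\int_X\!\int_X\!\int_{-\infty}^{+\infty} \bigl|\1_{E_t(u)}(y)-\1_{E_t(u)}(x)\bigr|\, dt\, dm_x(y)\, d\nu(x).
\]
The integrand is non-negative and jointly measurable in $(t,x,y)$: indeed, the set $\{(t,x)\in\mathbb{R}\times X : u(x)>t\}$ is Borel because $u$ is $\nu$-measurable, hence so is the hypograph-type set appearing above when lifted to $\mathbb{R}\times X\times X$. The measure $\nu\otimes m_x$ on $X\times X$ was introduced just before the statement, and combined with Lebesgue measure on $\mathbb{R}$ one obtains a $\sigma$-finite product measure under the standing assumption that $(X,d,\nu)$ is $\sigma$-finite. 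Tonelli's theorem thus applies and permits us to pull the $dt$-integral outside.

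The third step is to recognise the inner integral. For each fixed $t\in\mathbb{R}$,
\[
\frac{1}{2}\int_X\!\int_X \bigl|\1_{E_t(u)}(y)-\1_{E_t(u)}(x)\bigr|\, dm_x(y)\, d\nu(x) \ = \ TV_m(\1_{E_t(u)}) \ = \ P_m(E_t(u)),
\]
using the identity $P_m(E)=TV_m(\1_E)$ already recorded in the excerpt. Combining this with the previous step yields the asserted formula. The statement is interpreted with the convention $+\infty=+\infty$ when $u\in L^1(X,\nu)\setminus BV_m(X)$, which is consistent since Tonelli makes no integrability hypothesis.

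There is no substantial obstacle: the only mild care needed is to confirm the joint measurability of $(t,x,y)\mapsto \1_{E_t(u)}(y)-\1_{E_t(u)}(x)$ and the $\sigma$-finiteness of $\nu\otimes m_x$, both of which follow from the standing assumptions on $[X,d,m]$ and $\nu$.
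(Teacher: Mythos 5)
Your argument is correct: the pointwise layer-cake identity $|u(y)-u(x)|=\int_{-\infty}^{+\infty}|\1_{E_t(u)}(y)-\1_{E_t(u)}(x)|\,dt$ together with Tonelli for the non-negative, jointly measurable integrand against $\mathcal{L}^1\otimes(\nu\otimes m_x)$ gives exactly the stated formula, and the identification of the inner integral with $P_m(E_t(u))=TV_m(\1_{E_t(u)})$ is immediate. Note that the paper itself does not prove this theorem; it recalls it from the reference \cite{MST1}, where the proof is precisely this standard layer-cake/Tonelli computation, so your proposal matches the intended argument.
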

	
For a function $u : X \rightarrow \R$ we define its nonlocal gradient $\nabla u: X \times X \rightarrow \R$ as
$$\nabla u (x,y):= u(y) - u(x) \quad \forall \, x,y \in X.$$
For a function $\z : X \times X \rightarrow \R$, its {\it $m$-divergence} ${\rm div}_m \z : X \rightarrow \R$ is defined as
$$({\rm div}_m \z)(x):= \frac12 \int_{X} (\z(x,y) - \z(y,x)) dm_x(y).$$
For $p \geq 1$, we define the space
$$X_m^p(X):= \bigg\{ \z \in L^\infty(X\times X, \nu \otimes m_x) \ : \ {\rm div}_m \z \in L^p(X,\nu) \bigg\}.$$
For $u \in BV_m(X) \cap L^{p'}(X,\nu)$ and $\z \in X_m^p(X)$,  $1\le p\le \infty$, having in mind that $\nu$ is reversible, we have the following {\it Green's formula}
$$\int_{X} u(x) ({\rm div}_m \z)(x) dx = -\frac12 \int_{X \times X} \nabla u(x,y) \z(x,y)   d\nu\otimes dm_x. $$
In the next result we characterize $TV_m$ and the $m$-perimeter using the $m$-divergence operator.
Let us denote by $\hbox{sign}_0(r)$ the usual sign function and  by $\hbox{sign}(r)$ the multivalued sign function
$${\rm sign}(u)(x):=  \left\{ \begin{array}{lll} 1 \quad \quad &\hbox{if} \ \  u(x) > 0, \\ -1 \quad \quad &\hbox{if} \ \ u(x) < 0, \\ \left[-1,1\right] \quad \quad &\hbox{if} \ \ \ u = 0. \end{array}\right.$$
	
\begin{proposition} Let $1\le p\le \infty$. For $u \in BV_m(X) \cap L^{p'}(X,\nu)$, we have
$$TV_m(u) =   \sup \left\{ \int_{X} u(x) ({\rm div}_m \z)(x) d\nu(x)  \ : \ \z \in X_m^p(X), \ \Vert \z \Vert_\infty \leq 1 \right\}.$$
In particular, for any $\nu$-measurable set $E \subset X$, we have
$$P_m(E) =   \sup \left\{ \int_{E} ({\rm div}_m \z)(x) dx  \ : \ \z \in  X_m^1(X), \  \Vert \z \Vert_\infty \leq 1 \right\}.$$
\end{proposition}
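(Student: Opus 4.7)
I would prove the characterization of $TV_m(u)$ by two inequalities and then deduce the perimeter formula by specializing to $u = \1_E$. The main tool is the Green's formula stated just before the proposition.

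For the inequality $\sup \leq TV_m(u)$, pick any admissible field $\z \in X_m^p(X)$ with $\|\z\|_\infty \leq 1$. Green's formula gives
$$\int_X u(x)({\rm div}_m \z)(x)\, d\nu(x) = -\tfrac{1}{2}\int_{X\times X}(u(y)-u(x))\,\z(x,y)\, d(\nu\otimes m_x),$$
and bounding the right-hand side by $\tfrac{1}{2}\|\z\|_\infty \int_{X\times X}|u(y)-u(x)|\, d(\nu\otimes m_x) = \|\z\|_\infty TV_m(u) \leq TV_m(u)$ yields the inequality.

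For the reverse inequality I would construct an approximating sequence. The natural pointwise candidate is $\z^*(x,y) := -{\rm sign}_0(u(y)-u(x))$, which plugged into Green's formula produces exactly $TV_m(u)$; unfortunately its $m$-divergence lies only in $L^\infty(X,\nu)$, and need not be in $L^p$ when $\nu(X) = \infty$. I would fix this by exploiting $\sigma$-finiteness of $(X,\nu)$: choose $\nu$-measurable sets $A_n \nearrow X$ with $\nu(A_n) < \infty$ and set
$$\z_n(x,y) := -{\rm sign}_0(u(y)-u(x))\,\1_{A_n}(x)\,\1_{A_n}(y).$$
The antisymmetry $\z_n(y,x) = -\z_n(x,y)$ collapses the divergence to $({\rm div}_m \z_n)(x) = -\1_{A_n}(x)\int_{A_n}{\rm sign}_0(u(y)-u(x))\, dm_x(y)$, which is bounded pointwise by $\1_{A_n}$ and so belongs to every $L^p(X,\nu)$; in particular $\z_n \in X_m^p(X)$ with $\|\z_n\|_\infty \leq 1$. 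Green's formula applied to $\z_n$ yields
$$\int_X u(x)({\rm div}_m \z_n)(x)\, d\nu(x) = \tfrac{1}{2}\int_{A_n\times A_n}|u(y)-u(x)|\, d(\nu\otimes m_x),$$
which tends to $TV_m(u)$ as $n\to\infty$ by monotone convergence.

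The perimeter formula is the specialization $u = \1_E$, $p = 1$, $p' = \infty$. When $P_m(E) < \infty$ we have $\1_E \in BV_m(X)\cap L^\infty(X,\nu)$ and the previous part applies, noting $\int_X \1_E\,{\rm div}_m \z\, d\nu = \int_E {\rm div}_m \z\, d\nu$. When $P_m(E) = +\infty$, the truncated fields $\z_n$ built above (with $u = \1_E$) still belong to $X_m^1(X)$ with $\|\z_n\|_\infty \leq 1$, the Green-type identity remains valid for them (both sides are absolutely convergent since $\z_n$ is bounded and its divergence is supported on a set of finite $\nu$-measure), and monotone convergence again forces the supremum to be $+\infty$, matching $P_m(E)$. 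The one delicate ingredient is ensuring $\z_n \in X_m^p$ on an infinite-measure space; the symmetric truncation by $A_n\times A_n$ is chosen precisely because it preserves antisymmetry while localizing ${\rm div}_m \z_n$ to a set of finite $\nu$-measure.
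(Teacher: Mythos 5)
Your argument is correct: the upper bound via Green's formula, the antisymmetric candidate $\z(x,y)=-\mathrm{sign}_0(u(y)-u(x))$, and the symmetric truncation by $A_n\times A_n$ to keep $\mathrm{div}_m\z_n$ in $L^p$ on a $\sigma$-finite space all work as you describe, and the $P_m(E)=+\infty$ case is handled properly. The paper itself states this proposition without proof (it is recalled from the reference [MST1]), and your duality argument is exactly the standard one used there, so there is nothing further to compare.
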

	
Let us recall the concept of ergodicity (see, for example, \cite{HLL}).
	
	\begin{definition}{\rm Let $[X, d, m]$ be a metric random walk space with invariant  probability measure~$\nu$. A Borel set $B \subset X$ is said to be {\it invariant} with respect to the random walk $m$ if $m_x(B) = 1$ whenever $x$ is in $B$.
			
			The invariant  probability measure~$\nu$ is said to be {\it ergodic} if $\nu(B) = 0$ or $\nu(B) = 1$ for every invariant set $B$ with respect to the random walk $m$.
		}
	\end{definition}
	
The following result was obtained in \cite{MST0}.
\begin{theorem}\label{ergconectC}
Let $[X, d, m]$ be a metric random walk with invariant probability measure ~$\nu$. Then, the following assertions are equivalent:
\begin{itemize}
\item[(i)] $\nu$ is ergodic.
\item[(ii)] If $A,B\subset X$ are disjoint sets such that $0<\nu(A),\nu(B)<\nu(X)$ and $A \cup B = X$, then $L_m(A,B)\neq 0$.
\end{itemize}
\end{theorem}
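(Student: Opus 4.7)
My plan is to establish both implications by contradiction, with the main work concentrated in (i) $\Rightarrow$ (ii).

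For the easier direction (ii) $\Rightarrow$ (i), I would start from a Borel invariant set $B$ with $0 < \nu(B) < \nu(X)$ and aim for a contradiction with (ii). Setting $A := X \setminus B$ produces a nontrivial partition into two pieces of positive $\nu$-measure. The invariance of $B$ forces $m_x(A) = 0$ for every $x \in B$, so $L_m(B, A) = \int_B m_x(A)\, d\nu(x) = 0$, and reversibility of $\nu$ then yields $L_m(A,B) = L_m(B,A) = 0$, contradicting (ii).

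The substantive direction is (i) $\Rightarrow$ (ii). Assume $\nu$ is ergodic and suppose, for contradiction, that there exist disjoint $A, B$ with $A \cup B = X$, $0 < \nu(A), \nu(B) < \nu(X)$, and $L_m(A, B) = 0$. The hypothesis only yields $m_x(A) = 1$ for $\nu$-almost every $x \in A$, which is strictly weaker than $A$ being invariant. The plan is to extract from $A$ a genuinely invariant subset $\tilde A \subset A$ with $\nu(\tilde A) = \nu(A)$ by iterated removal of null "bad" subsets. Define $A_0 := A$ and, inductively,
\begin{equation*}
A_{k+1} := \{x \in A_k : m_x(A_k) = 1\};
\end{equation*}
each $A_k$ is Borel since $x \mapsto m_x(A_k)$ is Borel-measurable by the definition of a metric random walk.

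The critical step is to show $\nu(A_k \setminus A_{k+1}) = 0$ for every $k$, by induction. The base case $k = 0$ is exactly the hypothesis $L_m(A, X \setminus A) = 0$. For the inductive step, assume $\nu(A_{k-1} \setminus A_k) = 0$; applying the invariance identity $\nu(N) = \int_X m_x(N)\, d\nu(x)$ to $N = A_{k-1} \setminus A_k$ gives $m_x(A_{k-1} \setminus A_k) = 0$ for $\nu$-a.e. $x$, which combined with $m_x(A_{k-1}) = 1$ (valid for every $x \in A_k$ by construction) forces $m_x(A_k) = 1$ for $\nu$-a.e. $x \in A_k$. Setting $\tilde A := \bigcap_{k \geq 0} A_k$, countable subadditivity yields $\nu(A \setminus \tilde A) = 0$, hence $0 < \nu(\tilde A) < \nu(X)$. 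For any $x \in \tilde A$ we have $m_x(A_k) = 1$ for all $k$, and since $(A_k)$ is decreasing and $m_x$ is a probability measure, continuity from above gives $m_x(\tilde A) = 1$. Thus $\tilde A$ is an invariant Borel set of intermediate $\nu$-measure, contradicting ergodicity. The main conceptual obstacle, which I flag as the principal step, is this passage from an "almost invariant" set to a genuinely invariant one; once the interplay between $\nu$-null sets and $m_x$-null sets is pinned down via the invariance identity, the rest is bookkeeping.
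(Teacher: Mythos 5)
Your argument is correct. Note first that the paper does not actually prove this theorem: it is quoted from the reference \cite{MST0}, so there is no in-paper proof to compare against. Judged on its own, your proposal is a complete and valid proof, and you have correctly identified the one genuinely delicate point: the hypothesis $L_m(A,B)=0$ only gives $m_x(A)=1$ for $\nu$-a.e.\ $x\in A$, whereas the paper's definition of an invariant set demands $m_x(A)=1$ for \emph{every} $x\in A$. Your iterated trimming $A_{k+1}=\{x\in A_k:\ m_x(A_k)=1\}$, together with the use of the invariance identity $\nu(N)=\int_X m_x(N)\,d\nu(x)$ to convert $\nu$-null sets into $m_x$-null sets for a.e.\ $x$, closes exactly this gap, and the passage to $\tilde A=\bigcap_k A_k$ via countable subadditivity and continuity from above of the probability measures $m_x$ is sound. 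Two cosmetic remarks: in the direction (ii) $\Rightarrow$ (i) you invoke reversibility to pass from $L_m(B,A)=0$ to $L_m(A,B)=0$, but since the hypothesis of (ii) is symmetric in $A$ and $B$ you could simply apply (ii) to the pair $(B,A)$, which matters because the theorem as stated assumes only invariance of $\nu$, not reversibility; and since ergodicity is formulated for Borel invariant sets, you should either take $A$ Borel from the start or replace it by a Borel set differing from it by a $\nu$-null set before running the induction (the measurability of each $A_{k+1}$ then follows, as you say, from condition (i) in the definition of a random walk).
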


 In particular, the Theorem above gives an important family of ergodic measures: on a length space, a doubling measure $\nu$ is ergodic with respect to the random walk $m^{\nu,\varepsilon}$.
	
\subsection{BV-functions in metric measure spaces}

Let $(X, d, \nu)$ be a metric measure space. For functions in $L^1(X, \nu)$ we have the concept of total variation introduced by Miranda in \cite{Miranda1} (see also  \cite{ADiM}).  To introduce this concept recall that for a function $u : X \rightarrow \R$, its {\it slope} (also called local Lipschitz constant) is defined by
$$\vert \nabla u \vert(x) := \limsup_{y \to x} \frac{\vert u(y) - u(x)\vert}{d(x,y)},$$
with the convention that  $\vert \nabla u \vert(x) = 0$ if $x$ is an isolated point.

A function $u \in L^1(X, \nu)$ is said to be a {\it BV-function} if there exist locally Lipschitz functions $u_n$ converging to $u$ in $L^1(X, \nu)$ and such that
$$\sup_{n \in \N} \int_X \vert \nabla u_n \vert d\nu(x) < \infty.$$
We shall denote the space of all BV-functions by $BV(X,d, \nu)$. For $u \in BV(X,d, \nu)$ the total variation $\vert D u \vert_{\nu}$ on an open set $A \subset X$ is defined as:
$$
\vert D u \vert_{\nu}(A):= \inf \left\{ \liminf_{n \to \infty} \int_A \vert \nabla u_n \vert(x) d \nu(x) \ : \ u_n \in Lip_{loc}(X), \ u_n \to u \ \hbox{in} \ L^1(A, \nu) \right\}.
$$

We say that a measure $\nu$ on a metric space $X$ is doubling, if there exists a constant $C_d \geq 1$ such that following condition holds:
$$
0 < \nu(B(x,2r)) \leq C_d \, \nu(B(x, r)) < \infty
$$
for all $x \in X$ and $r > 0$. The constant $C_d$ is called the doubling constant of $X$. By iterating the doubling condition, we see that a classical estimate holds (see for instance \cite{Haj}):
		
\begin{proposition}\label{prop:doubling}
Let $y \in B(x,R)$ and set $r \in (0,R)$. Then
$$ \frac{\nu(B(y,r))}{\nu(B(x,R))} \geq 4^{-s} \bigg( \frac{r}{R} \bigg)^s$$
for all $s \geq \log_2(C_d)$. The number $\log_2(C_d)$ is called the homogenous dimension of $X$.
\end{proposition}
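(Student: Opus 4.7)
The plan is to iterate the doubling condition enough times to pass from the small ball $B(y,r)$ to a ball large enough to contain $B(x,R)$, and then invoke monotonicity of $\nu$.

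\textbf{Step 1 (inclusion).} The hypothesis $y \in B(x,R)$ together with the triangle inequality gives, for any $z \in B(x,R)$, the bound $d(y,z) \leq d(y,x) + d(x,z) < 2R$. Hence $B(x,R) \subset B(y,2R)$ and in particular $\nu(B(x,R)) \leq \nu(B(y,2R))$.

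\textbf{Step 2 (choice of iteration count).} Let $k \in \N$ be the smallest integer such that $2^k r \geq 2R$. Since $r < R$ we have $2R/r > 2$ so $k \geq 2$, and by minimality $2^{k-1} r < 2R$, which rearranges to
\begin{equation*}
k \;<\; \log_2\!\bigl(4R/r\bigr).
\end{equation*}
Applying the doubling inequality $k$ times in the form $\nu(B(y,2^{j-1}r)) \geq C_d^{-1}\,\nu(B(y,2^j r))$ yields
\begin{equation*}
\nu(B(y,r)) \;\geq\; C_d^{-k}\,\nu(B(y,2^k r)) \;\geq\; C_d^{-k}\,\nu(B(y,2R)) \;\geq\; C_d^{-k}\,\nu(B(x,R)),
\end{equation*}
where the second inequality uses monotonicity and $2^k r \geq 2R$, and the third uses Step 1.

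\textbf{Step 3 (rewriting the constant).} Write $C_d^{-k} = 2^{-k\log_2 C_d}$. Combined with the bound on $k$ from Step 2, this gives
\begin{equation*}
C_d^{-k} \;\geq\; 2^{-\log_2(4R/r)\,\log_2 C_d} \;=\; \bigl(r/(4R)\bigr)^{\log_2 C_d} \;=\; 4^{-\log_2 C_d}\bigl(r/R\bigr)^{\log_2 C_d}.
\end{equation*}
Finally, since $r < R$ we have $r/(4R) < 1$, so the function $s \mapsto (r/(4R))^s$ is decreasing. Thus for every $s \geq \log_2 C_d$,
\begin{equation*}
\bigl(r/(4R)\bigr)^{\log_2 C_d} \;\geq\; \bigl(r/(4R)\bigr)^s \;=\; 4^{-s}\bigl(r/R\bigr)^s,
\end{equation*}
which chained with the previous inequalities yields the claim.

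\textbf{Main obstacle.} There is no real obstacle here: the argument is a standard iteration of the doubling property, and the only point requiring a moment's care is the monotonicity step at the end, which relies on the strict inequality $r/(4R) < 1/4 < 1$ coming from the hypothesis $r < R$. The role of the factor $4$ (rather than $2$) in the constant $4^{-s}$ is precisely to absorb the extra doubling step needed both to reach $2R$ from $r$ and to allow the exponent $k$ to be rounded up to $\log_2(4R/r)$.
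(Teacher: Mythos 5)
Your proof is correct, and it is the standard doubling-iteration argument: the paper itself gives no proof of this proposition, simply citing \cite{Haj}, where essentially this same chain (inclusion $B(x,R)\subset B(y,2R)$, iterate doubling $k\approx\log_2(4R/r)$ times, convert $C_d^{-k}$ to $(r/(4R))^{\log_2 C_d}$, then use monotonicity in $s$) is carried out. All steps check out, including the final passage from the exponent $\log_2 C_d$ to an arbitrary $s\geq\log_2 C_d$ via $r/(4R)<1$.
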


The metric measure space $(X,d, \nu)$ is said to {\it support a local $1$-Poincar\'{e} inequality} if there exist constants $c>0$ and $\lambda \geq1$ such that, for any $u \in {\rm Lip}(X,d)$, the inequality
$$\int_{B(x,r)} \vert u(y) - u_{B(x,r)} \vert d\nu(y) \leq c r \int_{ B(x,\lambda r)} \vert \nabla u \vert (y) d \nu(y)$$
holds, where
$$u_{B(x,r)}:= \frac{1}{\nu(B(x,r))}\int_{ B(x,r) } u(y) d\nu(y).$$
		
We recall the following result proved in \cite[Theorem 2.22]{MST1} (see also \cite[Theorem 3.1]{MMS}), which links the total variation defined above and the nonlocal total variation of the type presented in Example \ref{JJ}(4):
		
\begin{theorem}\label{thm:connectionwithlocal}
Let $(X,d,\nu)$ be a metric measure space with doubling measure $\nu$ and supporting a local 1-Poincar\'e inequality.
Given $u \in L^1(X,\nu)$, we have that $u \in BV(X,d,\nu)$ if and only if
$$ \liminf_{\varepsilon \rightarrow 0} \frac{1}{\varepsilon} TV_{m^{\nu,\varepsilon}}(u) < \infty.$$
\end{theorem}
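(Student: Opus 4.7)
The statement couples the Miranda $BV$-norm, built from locally Lipschitz approximations and their slopes, with the nonlocal total variation attached to the $\varepsilon$-step random walk $m^{\nu,\varepsilon}$. I would prove the two implications separately; the doubling assumption and the $1$-Poincar\'e inequality enter essentially only in the reverse direction.

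\emph{From $BV$ to finite $\liminf$.} Starting from $u \in BV(X,d,\nu)$, choose locally Lipschitz $u_n \to u$ in $L^1(X,\nu)$ with $\int_X |\nabla u_n|\,d\nu \to |Du|_\nu(X)$. For a locally Lipschitz $u_n$ and $y \in B(x,\varepsilon)$ one has
$$|u_n(y) - u_n(x)| \le d(x,y)\bigl(|\nabla u_n|(x) + \eta_\varepsilon^n(x)\bigr),$$
with $\eta_\varepsilon^n(x) \to 0$ pointwise as $\varepsilon \to 0$. Averaging over $B(x,\varepsilon)$ against $m_x^{\nu,\varepsilon}$ and integrating against $\nu$ yields
$$\frac{1}{\varepsilon}\, TV_{m^{\nu,\varepsilon}}(u_n) \leq \frac{1}{2}\int_X |\nabla u_n|(x)\,d\nu(x) + o(1)_{\varepsilon \to 0}.$$
Continuity of $TV_{m^{\nu,\varepsilon}}$ in $L^1(X,\nu)$ from Proposition \ref{lemsc1} lets one pass from $u_n$ to $u$, and then taking $\liminf_{\varepsilon \to 0}$ gives the desired finiteness, in fact bounded by a multiple of $|Du|_\nu(X)$.

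\emph{From finite $\liminf$ to $BV$.} Fix a sequence $\varepsilon_k \to 0$ realising the $\liminf$. I would construct locally Lipschitz surrogates $u_k$ of $u$ by a discrete convolution: cover $X$ with a maximal $\varepsilon_k$-separated net $\{x_i^k\}$, attach a Lipschitz partition of unity $\{\varphi_i^k\}$ with $\mathrm{supp}\,\varphi_i^k \subset B(x_i^k, c\varepsilon_k)$ and $\mathrm{Lip}(\varphi_i^k) \leq C/\varepsilon_k$, and set $u_k(x) := \sum_i \varphi_i^k(x)\,u_{B(x_i^k,\varepsilon_k)}$. Using the doubling property (Proposition \ref{prop:doubling}) to bound the overlap of the cover and the $1$-Poincar\'e inequality to compare averages over neighbouring balls, one derives
$$\int_X |\nabla u_k|\,d\nu \leq \frac{C'}{\varepsilon_k}\, TV_{m^{\nu,\lambda'\varepsilon_k}}(u),$$
together with $\|u_k - u\|_{L^1(X,\nu)} \to 0$ via a Lebesgue-differentiation-type argument relying again on doubling. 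This exhibits $u$ as an admissible competitor in the definition of $BV(X,d,\nu)$.

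\emph{Main obstacle.} The second direction is the delicate step: turning purely nonlocal oscillation data for $u$ into a controlled locally Lipschitz surrogate requires Whitney-type combinatorics on $X$, and here both the doubling bound and the $1$-Poincar\'e inequality are needed simultaneously, the former to build a partition of unity with bounded overlap and a finite multiplicity independent of $k$, the latter to replace the difference of averages on two adjacent balls by an $L^1$-oscillation that can be read off from $TV_{m^{\nu,\lambda'\varepsilon_k}}(u)$. Tracking the constants through the iterated doubling (Proposition \ref{prop:doubling}) and the chain of enlarged balls produced by successive applications of the Poincar\'e inequality is where the bulk of the technical work lies; a further minor point is handling the $\sigma$-finiteness of $\nu$ by localising to balls before globalising via monotone convergence.
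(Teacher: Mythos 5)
First, note that the paper does not prove this statement at all: it is recalled verbatim from \cite[Theorem 2.22]{MST1} (see also \cite[Theorem 3.1]{MMS}), so there is no in-paper proof to compare against; your proposal has to be judged against the standard argument in those references. Your reverse direction is essentially that standard argument: a maximal $\varepsilon_k$-separated net, a Lipschitz partition of unity with $\mathrm{Lip}(\varphi_i^k)\leq C/\varepsilon_k$, the discrete convolution $u_k=\sum_i\varphi_i^k u_{B(x_i^k,\varepsilon_k)}$, and bounded overlap from doubling. Two remarks there: the comparison of averages over neighbouring balls, $|u_{B_i}-u_{B_j}|\leq\dashint_{B_i}\dashint_{B_j}|u(y)-u(z)|\,d\nu(y)\,d\nu(z)$, needs only doubling, not the Poincar\'e inequality; and your construction controls $\int_X|\nabla u_k|\,d\nu$ by $\varepsilon_k^{-1}TV_{m^{\nu,\lambda'\varepsilon_k}}(u)$ at an \emph{enlarged} scale $\lambda'\varepsilon_k$, which is not pointwise dominated by the quantity at scale $\varepsilon_k$; you must build the net at scale $\varepsilon_k/\lambda'$ so that the bound is read off directly along the sequence realising the $\liminf$. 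With that bookkeeping the reverse implication is sound.

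The genuine gap is in the forward direction, and it is exactly where you claim the hypotheses are not needed. The inequality $|u_n(y)-u_n(x)|\leq d(x,y)\bigl(|\nabla u_n|(x)+\eta^n_\varepsilon(x)\bigr)$ with $\eta^n_\varepsilon(x)\to0$ only pointwise does not yield $\frac{1}{\varepsilon}TV_{m^{\nu,\varepsilon}}(u_n)\leq\frac12\int_X|\nabla u_n|\,d\nu+o(1)$: the error $\int_X\eta^n_\varepsilon\,d\nu$ has no domination (for a merely locally Lipschitz $u_n$ on a space of possibly infinite measure it need not even be finite), and even granting it, the $o(1)$ depends on $n$ while the passage from $u_n$ to $u$ via Proposition \ref{lemsc1} is done at fixed $\varepsilon$, so the two limits cannot be interchanged. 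The slope is an infinitesimal quantity and cannot control oscillation at the fixed scale $\varepsilon$ without a connectivity hypothesis. What is actually needed is the scale-uniform estimate $TV_{m^{\nu,\varepsilon}}(v)\leq C\varepsilon\int_X|\nabla v|\,d\nu$ for locally Lipschitz $v$, with $C$ independent of $\varepsilon$ and $v$; this is proved by covering $X$ with balls $B_i$ of radius $\varepsilon$ with bounded overlap (doubling, Proposition \ref{prop:doubling}), splitting $|v(y)-v(x)|\leq|v(y)-v_{2B_i}|+|v_{2B_i}-v(x)|$ for $x\in B_i$, $y\in B(x,\varepsilon)\subset 2B_i$, and applying the local $1$-Poincar\'e inequality on each $2B_i$. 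So the doubling property and the $1$-Poincar\'e inequality are indispensable in the forward direction as well — arguably more so than in the reverse one — and your proposal as written omits the step where they do the work.
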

	
\section{Least Gradient Functions in  Metric Random Walk Spaces}
	
From now on, we will assume that $[X,d,m]$ is a metric random walk space with an invariant and reversible measure $\nu$, with $\nu(X) < \infty$.
	
Given $\Omega \subset X$ a bounded $\nu$-measurable set, we define its $m$-boundary as
$$\partial_m \Omega := \{ x \in X \setminus \Omega \ : \ m_x(\Omega) > 0 \}.$$
We set $\Omega_m:= \Omega \cup \partial_m \Omega$.
	
\subsection{Nonlocal least gradient problem}\label{sec:lgpnonlocal}
We will deal with $\Omega \subset X$ such that $0<\nu(\Omega)<\nu(X)$. Then, assuming that $\nu$ is ergodic, by Theorem~\ref{ergconectC}, we have that $$\nu( \partial_m \Omega) >0.$$   From now on we will assume that $\nu$ is ergodic.
	
Given a function $\psi \in L^1(\partial_m \Omega)$ we consider {\it $m$-least gradient problem} as the variational problem
\begin{equation}\label{least1}
\min \{ TV_m (u) \ : \ u \in BV_m(\Omega_m), \ \hbox{such that} \ u=\psi \ \nu-\hbox{a.e. on} \ \partial_m \Omega \}.
\end{equation}

 We may equivalently state the problem on the whole space $X$ and not only on $\Omega_m$; suppose that instead of defining the boundary condition only on $\partial_m \Omega$, we take $\psi \in L^1(X,\nu)$ and we extend $u$ to the whole space $X$ by setting $u = \psi$ $\nu$-a.e. on $X \backslash \Omega$. Let us denote by $TV_m(u,A)$ the total variation of $u \res A$ in a Borel set $A \subset X$. By the reversibility of $\nu$ and the definition of $\partial_m \Omega$, we have
\begin{equation}
TV_m(u,X) = TV_m (u, \Omega_m) + \int_{\partial_m\Omega}\int_{X \backslash \Omega_m} |\psi(y) - \psi(x)| \, dm_x(y)d\nu(x) + TV_m(\psi, X \backslash \Omega_m),
\end{equation}
hence the total variations of $u$ on $X$ and on $\Omega_m$ differ by a constant, so they have the same minimizers.

Using the direct method of calculus of variation we have the following existence result.
	
\begin{theorem}
If $\psi \in L^\infty(\partial_m \Omega)$ the $m$-least gradient problem \eqref{least1} has a solution.
\end{theorem}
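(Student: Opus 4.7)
The plan is to apply the direct method of the calculus of variations on $\Omega_m$, using truncation to obtain uniform $L^\infty$ bounds, weak $L^2$ compactness to extract a limit, and the lower semicontinuity from Proposition \ref{lemsc1} to conclude.

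First I would check that the admissible class is nonempty: the constant extension of $\psi$ (say, by $\|\psi\|_\infty$) to all of $\Omega_m$ yields a function in $BV_m(\Omega_m)$ since $\nu(X)<\infty$ and the function is bounded, hence $TV_m$ of it is finite. So the infimum $\alpha := \inf\{TV_m(u) : u \in BV_m(\Omega_m),\ u=\psi \text{ on } \partial_m\Omega\}$ is finite and nonnegative.

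Next I would pick a minimizing sequence $\{u_n\}$ and truncate: set $M:=\|\psi\|_{L^\infty(\partial_m\Omega)}$ and define $\widetilde u_n:=T_M u_n$, where $T_M(r)=\max(-M,\min(M,r))$. Since $\psi$ satisfies $|\psi|\le M$ $\nu$-a.e.\ on $\partial_m\Omega$, the truncated functions still satisfy $\widetilde u_n=\psi$ on $\partial_m\Omega$. Because $T_M$ is $1$-Lipschitz, Proposition \ref{lemLipsch} gives $TV_m(\widetilde u_n)\le TV_m(u_n)$, so $\{\widetilde u_n\}$ is still a minimizing sequence. Moreover $\|\widetilde u_n\|_\infty\le M$, and since $\nu(\Omega_m)\le\nu(X)<\infty$, this yields a uniform bound in $L^2(\Omega_m,\nu)$.

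By the Banach--Alaoglu theorem in the reflexive space $L^2(\Omega_m,\nu)$, passing to a subsequence (not relabelled) I may assume $\widetilde u_n\rightharpoonup u$ weakly in $L^2(\Omega_m,\nu)$ for some $u\in L^2(\Omega_m,\nu)$. The boundary condition passes to the weak limit: indeed $\mathbf{1}_{\partial_m\Omega}(\widetilde u_n-\psi)=0$ for every $n$, and testing against any $\varphi\in L^2(\partial_m\Omega,\nu)$ (extended by $0$) gives $\int_{\partial_m\Omega}(u-\psi)\varphi\,d\nu=0$, so $u=\psi$ $\nu$-a.e.\ on $\partial_m\Omega$.

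Finally, by the weak-$L^2$ lower semicontinuity of $TV_m$ from Proposition \ref{lemsc1},
\begin{equation}
TV_m(u)\le\liminf_{n\to\infty}TV_m(\widetilde u_n)=\alpha,
\end{equation}
which in particular shows $u\in BV_m(\Omega_m)$ and that $u$ attains the minimum. The only step that requires some care is the preservation of the boundary condition under weak convergence, but since this is a linear (affine) constraint in $L^2$, this is routine; the rest is the standard direct-method machinery, with truncation being essential because $TV_m$ is only coercive modulo constants.
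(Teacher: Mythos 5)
Your proof is correct and follows essentially the same route as the paper: truncation at level $\|\psi\|_\infty$ via Proposition \ref{lemLipsch}, extraction of a weakly convergent subsequence from the bounded minimizing sequence, and lower semicontinuity from Proposition \ref{lemsc1}. The only (cosmetic) differences are that you use weak $L^2$ compactness in place of the paper's Dunford--Pettis argument in $L^1$, and your justification that the boundary condition passes to the weak limit (testing the affine constraint against functions supported on $\partial_m\Omega$) is cleaner than the paper's pointwise $\liminf$/$\limsup$ sandwich.
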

\begin{proof} Let
		$$\tau = \inf \{ TV_m (u) \ : \ u \in BV_m(\Omega_m), \ \hbox{such that} \ u=\psi \ \hbox{on} \ \partial_m \Omega \},$$
		and $u_n \in  BV_m(\Omega_m)$ a sequence of admissible functions such that $TV_m(u_n) \to \tau$. Firstly, we correct the functions $u_n$, we set
		$$\widetilde{u_n}(x):= \left\{ \begin{array}{lll} \Vert \psi \Vert_\infty \quad &\hbox{if} \ \ u_n(x) > \Vert \psi \Vert_\infty  \\[10pt] u_n(x) \quad &\hbox{if} \ \ u_n(x) \in [- \Vert \psi \Vert_\infty, \Vert \psi \Vert_\infty] \\[10pt]- \Vert \psi \Vert_\infty \quad &\hbox{if} \ \ u_n(x) < -\Vert \psi \Vert_\infty.\end{array}  \right.$$
		By Proposition \ref{lemLipsch}, we have $TV_m(\widetilde{u_n}) \leq TV_m(u_n)$, hence the functions $\widetilde{u_n}$ are admissible. Moreover, the sequence $\{\widetilde{u_n} \}$ is uniformly bounded, then by the Dunford-Pettis Theorem, we can assume, taking a subsequence if necessary, that $\widetilde{u_n} \rightharpoonup u \in BV_m(\Omega_m)$ weakly in $L^1(X, \nu)$. Then, by Proposition \ref{lemsc1}, we have
		$$TV_m(u) \leq \liminf_{n \to \infty} TV_m(\widetilde{u_n}) = \tau.$$
		Finally, since $u$ is the weak limit of the sequence $\{\widetilde{u_n} \}$, we have
		$$\liminf_{n \to \infty} \widetilde{u_n}(x) \leq u(x) \leq \limsup_{n \to \infty} \widetilde{u_n}(x) \quad \nu-a.e. \ x \in \Omega_m.$$
		Thus, $u = \psi$ $\nu-\hbox{a.e. on} \ \partial_m \Omega$, so $u$ is a solution of the $m$-least gradient problem \eqref{least1}.
	\end{proof}
	
	Now, we define $m-$least gradient functions and prove nonlocal analogues of Miranda's Theorem, see \cite{Mir}, and a theorem by Bombieri, de Giorgi and Giusti linking a $m-$least gradient function to the minimality of its superlevel sets, see \cite{BdGG}.
	
	\begin{definition}
		We say that $u \in BV_m(X)$ is a $m-$least gradient function on $\Omega$, if for every $g \in BV_m(X)$ such that $g \equiv 0$ $\nu$-a.e. on $X \backslash \Omega$ we have
		$$ TV_m(u) \leq TV_m(u + g).$$
	\end{definition}
	
	\begin{proposition}\label{prop:miranda}
		Suppose that $u_n \in BV_m(X)$ is a sequence of $m-$least gradient functions in $\Omega$ convergent in $L^1(X, \nu)$ to $u \in BV_m(X)$. Then $u$ is a function of $m-$least gradient in $\Omega$.
	\end{proposition}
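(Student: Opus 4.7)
The plan is to imitate the classical proof of Miranda's theorem from the local BV setting, exploiting the fact that in the nonlocal framework the functional $TV_m$ enjoys much better continuity properties than its local counterpart. Concretely, Proposition \ref{lemsc1} states that $TV_m$ is \emph{continuous} (not merely lower semi-continuous) with respect to strong $L^1(X,\nu)$ convergence, which is exactly the mode of convergence we are given. This should let us pass to the limit directly on both sides of the least-gradient inequality.

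First, I would fix an arbitrary admissible perturbation $g \in BV_m(X)$ with $g \equiv 0$ $\nu$-a.e.\ on $X \setminus \Omega$, and write down the least gradient inequality for each $u_n$:
\begin{equation}
TV_m(u_n) \leq TV_m(u_n + g) \quad \text{for every } n \in \mathbb{N}.
\end{equation}
Since $u_n \to u$ in $L^1(X,\nu)$, obviously $u_n + g \to u + g$ in $L^1(X,\nu)$ as well (the perturbation $g$ is fixed). Applying the $L^1$-continuity statement of Proposition \ref{lemsc1} to both sequences, we obtain
\begin{equation}
TV_m(u_n) \to TV_m(u) \quad \text{and} \quad TV_m(u_n + g) \to TV_m(u + g).
\end{equation}
Passing to the limit in the least-gradient inequality then yields $TV_m(u) \leq TV_m(u+g)$, and since $g$ was arbitrary, $u$ is an $m$-least gradient function on $\Omega$.

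There is essentially no real obstacle in this proof; the entire argument rests on the $L^1$-continuity of $TV_m$, which in turn follows from the estimate
\[
|TV_m(v) - TV_m(w)| \leq \tfrac{1}{2}\int_X\int_X \bigl||v(y)-v(x)| - |w(y)-w(x)|\bigr|\,dm_x(y)\,d\nu(x) \leq \|v - w\|_{L^1(X,\nu)},
\]
where the last inequality uses the reversibility of $\nu$ and the fact that each $m_x$ is a probability measure. This is a genuinely nonlocal phenomenon: in the classical local setting one only has lower semi-continuity, so the analogue of Miranda's theorem requires an additional trick (a cut-off combined with lower semi-continuity), whereas here the continuity of the functional makes the argument a one-line limit passage. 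If one instead wished to avoid invoking $L^1$-continuity, the same conclusion would follow from the weak $L^1$ lower semi-continuity of $TV_m$ applied to $u_n \rightharpoonup u$ on the left together with $L^1$-continuity on the right, so the argument is quite robust.
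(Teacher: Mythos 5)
Your proof is correct and follows essentially the same route as the paper: fix an admissible perturbation $g$ and use the $L^1(X,\nu)$-continuity of $TV_m$ from Proposition \ref{lemsc1} to pass to the limit on both sides of $TV_m(u_n)\le TV_m(u_n+g)$. The explicit Lipschitz estimate $|TV_m(v)-TV_m(w)|\le \|v-w\|_{L^1(X,\nu)}$ that you supply is a correct justification of that continuity (using that each $m_x$ is a probability measure and that $\nu$ is invariant), but it does not change the argument, which is the same one-line limit passage the paper uses.
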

	
	\begin{proof}
		By Proposition \ref{lemsc1}, $m-$total variation is continuous with respect to convergence in $L^1(X, \nu)$. Fix $g \in BV_m(X)$ with support in $\Omega$. Hence
		$$ TV_m(u) \leftarrow TV_m(u_n) \leq TV_m(u_n + g) \rightarrow TV_m(u + g), $$
		hence $u$ is a function of $m-$least gradient.
	\end{proof}
	
	\begin{theorem}
		Suppose that $u \in BV_m(X)$. Then $u$ is a function of $m-$least gradient in $\Omega$ if and only if $\1_{E_t(u)}$ is a function of $m-$least gradient for all (equivalently - almost all) $t \in \mathbb{R}$.
	\end{theorem}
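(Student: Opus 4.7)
My plan is to prove both directions via the coarea formula $TV_m(u) = \int P_m(E_t(u))\,dt$, with the ``easy'' direction a direct consequence and the ``hard'' converse handled by contradiction, building a competitor from a monotone family of perimeter-minimizing level sets. The equivalence between ``all $t$'' and ``almost all $t$'' follows from $E_{t_n}(u) \uparrow E_t(u)$ as $t_n \downarrow t$ (monotone convergence) combined with the $L^1$-stability of the least-gradient property given by Proposition~\ref{prop:miranda}.

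For the easy direction (superlevel sets of least gradient $\Rightarrow u$ of least gradient), take any competitor $u+g$ with $g \in BV_m(X)$ supported in $\Omega$. Since $u+g=u$ on $X\setminus\Omega$, we have $E_t(u+g)\triangle E_t(u)\subset\Omega$ for every $t$. A preliminary reduction---truncating any BV competitor of $\mathbf{1}_{E_t(u)}$ to $[0,1]$ via Proposition~\ref{lemLipsch} and applying the coarea formula---shows that $\mathbf{1}_{E_t(u)}$ is of $m$-least gradient if and only if $P_m(E_t(u))\le P_m(F)$ for every Borel set $F$ with $F\triangle E_t(u)\subset\Omega$. Applying this to $F=E_t(u+g)$ for a.e.\ $t$ and integrating in $t$ using the coarea formula yields $TV_m(u)\le TV_m(u+g)$.

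For the converse I argue by contradiction. Suppose $u$ is of $m$-least gradient yet there is a Borel set $T\subset\R$ with $|T|>0$ such that for every $t\in T$ the function $\mathbf{1}_{E_t(u)}$ fails to be of $m$-least gradient; by the characterization above, there exists $F_t$ with $F_t\triangle E_t(u)\subset\Omega$ and $P_m(F_t)<P_m(E_t(u))$. For each $t\in\R$ I select a perimeter-minimizing competitor $F_t^*$ of $E_t(u)$; existence follows from the direct method using $\nu(X)<\infty$ for $L^1$-weak compactness (Dunford--Pettis), the lower semicontinuity of $TV_m$ from Proposition~\ref{lemsc1}, and a coarea extraction to ensure the minimizer is itself a characteristic function. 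The delicate step is to arrange $t\mapsto F_t^*$ to be \emph{both measurable and monotone nonincreasing}. Monotonicity rests on the submodularity $P_m(A\cup B)+P_m(A\cap B)\le P_m(A)+P_m(B)$, itself following from the pointwise indicator inequality $|\mathbf{1}_A(x)-\mathbf{1}_A(y)|+|\mathbf{1}_B(x)-\mathbf{1}_B(y)|\ge|\mathbf{1}_{A\cup B}(x)-\mathbf{1}_{A\cup B}(y)|+|\mathbf{1}_{A\cap B}(x)-\mathbf{1}_{A\cap B}(y)|$. Given minimizers $F_{t_1}^*, F_{t_2}^*$ with $t_1<t_2$, one checks that both $F_{t_1}^*\cup F_{t_2}^*$ and $F_{t_1}^*\cap F_{t_2}^*$ remain admissible competitors for $E_{t_1}(u)$ and $E_{t_2}(u)$ respectively; submodularity together with minimality then forces equality, so the pair may be replaced by the union and intersection to obtain $F_{t_1}^*\supseteq F_{t_2}^*$. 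Performing this on a countable dense set of levels and extending by monotone envelopes yields the desired family.

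Finally, I reconstruct the competitor $v(x):=\sup\{t\in\R:x\in F_t^*\}$ via the layer-cake formula. Monotonicity of $\{F_t^*\}_t$ gives $E_s(v)=F_s^*$ up to null sets, and admissibility yields $v=u$ on $X\setminus\Omega$. By the coarea formula,
\begin{equation*}
TV_m(v)=\int_{-\infty}^{\infty}P_m(F_t^*)\,dt<\int_{-\infty}^{\infty}P_m(E_t(u))\,dt=TV_m(u),
\end{equation*}
the strict inequality following from $|T|>0$ and the strict gap $P_m(F_t^*)\le P_m(F_t)<P_m(E_t(u))$ on $T$. This contradicts the $m$-least gradient property of $u$. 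The main obstacle in the entire argument is precisely this simultaneous measurable and monotone selection of the minimizers $F_t^*$; everything else is a clean application of the coarea formula.
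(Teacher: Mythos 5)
Your argument is correct in outline but takes a genuinely different, and considerably heavier, route than the paper for the main implication ``$u$ of $m$-least gradient $\Rightarrow$ each $\1_{E_t(u)}$ of $m$-least gradient''. The paper never constructs a competitor for $u$ out of perimeter-minimizing sets. Instead it observes that, by the coarea formula, $TV_m$ splits additively across the truncation $u=\min(u,t)+\max(u-t,0)$, so each piece inherits the least gradient property by a one-line subadditivity argument; iterating, the rescaled truncations $u_{\varepsilon,t}=\frac1\varepsilon\min(\varepsilon,\max(u-t,0))$ are all of $m$-least gradient, they converge in $L^1(X,\nu)$ to $\1_{E_t(u)}$ whenever $\nu(\{u=t\})=0$, and Proposition~\ref{prop:miranda} closes the argument (the exceptional levels are handled exactly as you handle the ``all vs.\ almost all'' equivalence, by approximating $t$ from above). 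This completely bypasses the three delicate ingredients your contradiction argument requires: existence of perimeter minimizers in the class $\{F:\ F\triangle E_t(u)\subset\Omega\}$, submodularity of $P_m$, and above all the simultaneous monotone and measurable selection $t\mapsto F_t^*$, which you rightly flag as the main obstacle. Your route is the classical Bombieri--De Giorgi--Giusti construction, and it does go through in this setting: submodularity follows from the pointwise indicator inequality you quote, and the countable-dense-levels-plus-monotone-envelopes device is standard. Its advantage is that it yields the stronger structural fact that minimizers admit nested families of minimizing superlevel sets; its cost is all of the selection machinery, which the paper's truncation trick renders unnecessary.

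Two points in your sketch would need genuine work to be complete. First, the reconstruction $v(x)=\sup\{t:\ x\in F_t^*\}$ must be shown to be a real-valued element of $BV_m(X)$ with $v-u$ supported in $\Omega$; for unbounded $u$ it is not automatic that $\bigcap_t F_t^*$ is $\nu$-null, so you should either first reduce to bounded $u$ (harmless, since truncations preserve the least gradient property) or use ergodicity, via Theorem~\ref{ergconectC}, to force $F_t^*=\emptyset$ for $t$ above the essential supremum. Second, the measurability of $t\mapsto P_m(F_t^*)$, which you need in order to integrate the strict inequality over the set $T$, should be extracted explicitly from the monotone construction. Your converse direction (minimality of the superlevel sets implies minimality of $u$, via $E_t(u+g)\triangle E_t(u)\subset\Omega$ and the coarea formula) is exactly what the paper means by ``follows directly from the coarea formula''.
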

	
\begin{proof}
Fix $t > 0$ and let $u_1 = \min(u, t)$ and $u_2 = u - u_1 = \max(u - t, 0)$. By the coarea formula, we have
$$ TV_m(u) = \int_{-\infty}^{+\infty} P_m(E_t(u)) dt = \int_{-\infty}^{t} P_m(E_t(u)) dt + \int_{t}^{+\infty} P_m(E_t(u)) dt = $$
$$ = TV_m(u_1) + TV_m(u_2).$$
This implies that $u_1$ and $u_2$ are functions of $m-$least gradient in $\Omega$. In fact, for any $g \in BV_m(X)$ such that $g \equiv 0$ $\nu$-a.e. on $X \backslash \Omega$,  we have
$$ TV_m(u_1) + TV_m(u_2) = TV_m(u) \leq TV_m(u + g) \leq TV_m(u_1 + g) + TV_m(u_2),$$
hence $u_1$ is a function of $m-$least gradient. We make a similar argument for $u_2$. Now, we set
$$ u_{\varepsilon, t} = \frac{1}{\varepsilon} \min(\varepsilon, \max(u - t, 0)) $$
and notice that by the above argument it is a function of $m-$least gradient (it is a rescaled minimum of $u_2$ and a constant).
		
Now, assume that $\nu(\{ u = t \}) = 0$. Then $u_{\varepsilon, t} \rightarrow \1_{E_t(u)}$ in $L^1(X, \nu)$. Indeed,
$$ \int_X |u_{\varepsilon, t} - \1_{E_t(u)}| d\nu = \int_{X \backslash E_t(u)} |u_{\varepsilon, t} - \1_{E_t(u)}| d\nu + \int_{E_t(u) \backslash E_{t + \varepsilon} (u)} |u_{\varepsilon, t} - \1_{E_t(u)}| d\nu + $$
$$ + \int_{E_{t + \varepsilon} (u)} |u_{\varepsilon, t} - \1_{E_t(u)}| d\nu = \int_{E_t(u) \backslash E_{t + \varepsilon} (u)} |u_{\varepsilon, t} - \1_{E_t(u)}| d\nu,$$
because the first and last integrals are equal to zero. But
$$\int_{E_t(u) \backslash E_{t + \varepsilon} (u)} |u_{\varepsilon, t} - \1_{E_t(u)}| d\nu \leq \nu(E_t(u) \backslash E_{t + \varepsilon} (u)),$$
which goes to zero if $\nu(\{ u = t \}) = 0$. By Proposition \ref{prop:miranda} $\1_{E_t(u)}$ is a function of $m-$least gradient.
		
Now, assume that $\nu(\{ u = t \}) > 0$. As $E_t = \bigcap_{t' > t} E_{t'}$, there exists a sequence $t_n \rightarrow t$ such that $\nu(\{ u = t_n \}) = 0$ and that $\1_{E_{t_n}} \rightarrow \1_{E_t}$ in $L^1(X, \nu)$. For each $n$ the function $\1_{E_{t_n}}$ is a function of $m-$least gradient; hence, by Proposition \ref{prop:miranda} $\1_{E_t(u)}$ is a function of $m-$least gradient.
		
The implication in the other direction follows directly from the coarea formula.
\end{proof}

	Let $\Omega \subset \R^N$ be a bounded open set. In \cite{MazRoSe} it is proved that the Dirichlet problem for the $1$--Laplacian operator
	\begin{equation}\label{ELe}
	\left\{\begin{array}{ll}
	\displaystyle -\mbox{div}\Big(\frac{Du}{|Du|}\Big)=0\,,&\hbox{in }\Omega\,,\\[10pt]
	u=h\,,&\hbox{on }\partial\Omega\,,
	\end{array}\right.
	\end{equation}
	has a solution $u \in BV(\Omega)$  for every $h\in
	L^1(\partial\Omega)$. The relaxed energy functional associated to problem \eqref{ELe} is the functional
	$\Phi_h: L^{\frac N{N-1}}(\Omega)
	\rightarrow (-\infty,+\infty]$ defined by
	\begin{equation}
	\label{Def_Phi_PDE}
	\Phi_h(u) = \left\{ \begin{array}{ll} \displaystyle
	\int_{\Omega}\vert Du \vert + \int_{\partial \Omega} \vert u - h \vert\, d \mathcal H^{N-1} & \hbox{if} \ u \in BV(\Omega),
	\\[10pt] +\infty & \hbox{if} \  u \in  L^{\frac N{N-1}}(\Omega) \setminus BV(\Omega).
	\end{array}\right.
	\end{equation}
	In \cite{MazRoSe} it is showed that the minimizer of the functional $\Phi_h$ coincides with the solution of problem~\eqref{ELe} and with the function of least gradient on $\Omega$ that coincides with $h$ on the boundary. A nonlocal version of  the above first statement  was obtained in \cite{MazPeRoTo}, more precisely
	for the random walk $m^J$  given in Example~\ref{JJ}(1). We will see now that using an adaptation of the method developed in \cite{MazPeRoTo}, we can obtain similar results for general metric random walk spaces. Although some of the proofs are similar, we will give them for the sake of completeness.

	Given a function $\psi \in L^1(\partial_m\Omega)$ and $u \in BV_m(\Omega)$, we define the function
	$$u_\psi(x):= \left\{\begin{array}{ll} u(x) \quad &\hbox{if} \ \  x \in \Omega \\[10pt] \psi(x) \quad &\hbox{if} \ \ x \in \partial_m\Omega. \end{array} \right.$$
	Consider the relaxed energy functional $\mathcal{J}_{\psi} : L^1(\Omega,\nu)
	\rightarrow [0, + \infty[$ given by
	\begin{equation}\label{nllgrad.intro}
	\mathcal{J}_{\psi} (u):= TV_m(u_{\psi}) = \frac{1}{2}\int_{\Omega_m}\int_{\Omega_m}
	| u_{\psi}(y) - u_{\psi}(x) | \, dm_x(y)d\nu(x).\end{equation}
	This functional $\mathcal{J}_{\psi}$ is the nonlocal version of the energy functional $\Phi_h$ defined by~\eqref{Def_Phi_PDE}.

Since if $v \in BV_m(\Omega_m)$, such that $v=\psi$ \ $\nu-\hbox{a.e. on} \ \partial_m \Omega$, then $(v_{\vert \Omega})_\psi = v$, we have the following result.
\begin{proposition}\label{equal1} Given $\psi \in L^1(\partial_m\Omega)$, For $u \in BV_m(\Omega_m)$, such that $u=\psi$ \ $\nu-\hbox{a.e. on} \ \partial_m \Omega$, the following are equivalent:
\begin{itemize}
\item[(i)] $u$ is a solution of the $m$-least gradient problem \eqref{least1}.
\item[(ii)] $u_{\vert \Omega}$ is a minimizer of $\mathcal{J}_{\psi}$.
\end{itemize}
\end{proposition}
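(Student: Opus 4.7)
The proposition is essentially a bookkeeping statement: the two variational problems have the same competitors (under the natural bijection $u \leftrightarrow u_{|\Omega}$) and the energies agree along this bijection. So my plan is to make this correspondence explicit in both directions, relying crucially on the observation already highlighted before the statement, namely that whenever $v \in BV_m(\Omega_m)$ satisfies $v = \psi$ $\nu$-a.e. on $\partial_m \Omega$, one has $(v_{|\Omega})_\psi = v$, and therefore $\mathcal{J}_\psi(v_{|\Omega}) = TV_m(v_{|\Omega, \psi}) = TV_m(v)$.

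For the implication (i) $\Rightarrow$ (ii), I would take an arbitrary competitor $w \in L^1(\Omega, \nu)$ for $\mathcal{J}_\psi$. I may assume $\mathcal{J}_\psi(w) < \infty$, for otherwise the inequality $\mathcal{J}_\psi(u_{|\Omega}) \leq \mathcal{J}_\psi(w)$ is trivial. Then by definition $w_\psi \in BV_m(\Omega_m)$ and $w_\psi = \psi$ $\nu$-a.e. on $\partial_m \Omega$, so $w_\psi$ is admissible in problem \eqref{least1}. By hypothesis,
\begin{equation}
TV_m(u) \leq TV_m(w_\psi) = \mathcal{J}_\psi(w).
\end{equation}
Combining with $\mathcal{J}_\psi(u_{|\Omega}) = TV_m((u_{|\Omega})_\psi) = TV_m(u)$ (using the pre-statement observation, since $u = \psi$ on $\partial_m\Omega$), we obtain $\mathcal{J}_\psi(u_{|\Omega}) \leq \mathcal{J}_\psi(w)$.

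For the converse (ii) $\Rightarrow$ (i), I would take any admissible $v \in BV_m(\Omega_m)$ for \eqref{least1}, that is, $v = \psi$ $\nu$-a.e. on $\partial_m \Omega$. Then $v_{|\Omega} \in L^1(\Omega, \nu)$ is a competitor for $\mathcal{J}_\psi$, and by the minimality of $u_{|\Omega}$,
\begin{equation}
TV_m(u) = \mathcal{J}_\psi(u_{|\Omega}) \leq \mathcal{J}_\psi(v_{|\Omega}) = TV_m((v_{|\Omega})_\psi) = TV_m(v),
\end{equation}
where the last equality again uses $(v_{|\Omega})_\psi = v$. Hence $u$ solves \eqref{least1}.

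There is no real obstacle here; the only thing to be mildly careful about is to keep track of why $w_\psi \in BV_m(\Omega_m)$ in the first implication (which follows from finiteness of $\mathcal{J}_\psi(w)$ together with the boundedness of the extension $\psi$ on $\partial_m\Omega$ guaranteed by $\psi \in L^1(\partial_m\Omega)$ and the $m$-integrability encoded in the definition of $\mathcal{J}_\psi$), and to record that the integrability $v_{|\Omega} \in L^1(\Omega,\nu)$ in the converse direction follows from $v \in BV_m(\Omega_m) \subset L^1(\Omega_m,\nu)$ under our standing assumption $\nu(X) < \infty$. With those remarks the proof is essentially a one-line identification.
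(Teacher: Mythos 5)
Your proof is correct and follows exactly the route the paper intends: the paper's entire argument is the one-line observation preceding the statement that $(v_{|\Omega})_\psi = v$ for admissible $v$, which identifies the two competitor classes and their energies, and you have simply spelled out both directions of that identification. The only cosmetic remark is that the admissibility of $w_\psi$ in $BV_m(\Omega_m)$ is automatic from $L^1(\Omega_m,\nu)\subset BV_m(\Omega_m)$ (stated earlier in the paper), so no finiteness discussion is needed.
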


	In \cite{MST1}, the authors study the $m$-1-Laplacian operator $\Delta^m_1$, which formally is the operator
	$$\Delta^m_1 u(x) = \int_{\Omega_m}
	\frac{u_\psi(y)-u(x)}{|u_\psi(y)-u(x)|}dm_x(y) \quad \mbox{for } x \in \Omega.$$
	Consider the {\it nonlocal
		$1$-Laplace problem} with Dirichlet boundary condition~$\psi$:
	\begin{equation}\label{1700.intro}
	\left\{\begin{array} {ll} \displaystyle  -\Delta^m_1 u(x)=0,& x\in\Omega,
	\\[10pt] u(x)=\psi(x),& x\in \partial_m\Omega.
	\end{array}\right.
	\end{equation}
	
	\begin{definition}\label{Defi.1.var} {\rm Let $\psi\in L^1(\partial_m\Omega)$.
			We say that $u\in BV_m(\Omega)$ is a    solution  to
			\eqref{1700.intro}  if there exists $g\in L^\infty(\Omega_m\times \Omega_m)$ with
			$\|g\|_\infty\leq 1$ verifying
			\begin{equation}\label{lasim}
			g(x,y)=-g(y,x)\quad\hbox{for  $(\nu\otimes dm_x)$-a.e $(x,y)$ \ in $\Omega_m\times \Omega_m$},
			\end{equation}
			\begin{equation}\label{1-lapla2.var}
			g(x,y)\in \mbox{sign}(u_\psi(y)-u_\psi(x))\quad
			\hbox{for  $(\nu\otimes dm_x)$-a.e $(x,y)$ \ in $\Omega_m\times \Omega_m$},
			\end{equation} and
			\begin{equation}\label{1-lapla.var}
			-\int_{\Omega_m}g(x,y)\,dm_x(y)= 0 \quad \mbox{for }\nu-\mbox{a.e }x\in \Omega.
			\end{equation}
		}
	\end{definition}
	
	\begin{definition}  Let $q \geq 1$. We say that $(m,\nu)$ satisfies a {\it $q$-Poincar\'{e} Inequality in $\Omega$} if there exists $\lambda > 0$ such that
		$$
		\lambda\int_\Omega \left|u(x) \right|^q \, d\nu(x)\le \int_{\Omega}
		\int_{\Omega_m}  |u_{\psi}(y)-u(x)|^q \, dm_x(y) \,
		d\nu(x)+\int_{\partial_m\Omega}|\psi (y)|^q \, d\nu(y)
		$$
		for all $u\in L^q(\Omega, \nu)$ and  $\psi \in L^q(\partial_m \Omega)$.
	\end{definition}
	
	We have the following result.
	\begin{theorem} \label{teo.varia}   Let $\psi\in L^\infty(\partial_m\Omega)$.  If $(m,\nu)$ satisfies a $p$-Poincar\'{e} inequality in $\Omega$ for all $p > 1$,
		then $u\in L^1(\Omega)$ is a solution to problem \eqref{1700.intro} if and only if it is a minimizer of the functional $\mathcal{J}_{\psi}$ given by the formula~\eqref{nllgrad.intro}.
	\end{theorem}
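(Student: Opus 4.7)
The \emph{sufficiency} direction (solution $\Rightarrow$ minimizer) is algebraic: given $u$ with an antisymmetric $g$ as in Definition~\ref{Defi.1.var}, the reversibility of $\nu$ together with \eqref{lasim} yields, for every admissible $w$,
$$\int_{\Omega_m}\int_{\Omega_m} g(x,y)(w_\psi(y)-w_\psi(x))\, dm_x(y)\, d\nu(x) = -2\int_{\Omega_m} w_\psi(x)\int_{\Omega_m} g(x,y)\, dm_x(y)\, d\nu(x).$$
The inner integral vanishes on $\Omega$ by \eqref{1-lapla.var} and $w_\psi\equiv \psi$ on $\partial_m\Omega$, so the right-hand side is a constant $C(g,\psi)$ independent of $w$. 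Setting $w=u$ and using \eqref{1-lapla2.var} identifies $C(g,\psi)=2\mathcal{J}_\psi(u)$, while for any other $w$ the bound $\|g\|_\infty\le 1$ gives the left-hand side $\le 2\mathcal{J}_\psi(w)$, whence $\mathcal{J}_\psi(u)\le\mathcal{J}_\psi(w)$.

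For the converse direction I would adapt the $p$-Laplacian approximation of \cite{MazPeRoTo}. Let $u_p$ be the unique minimizer of the strictly convex functional
$$\mathcal{J}_{\psi,p}(v):=\frac{1}{2p}\int_{\Omega_m}\int_{\Omega_m}|v_\psi(y)-v_\psi(x)|^p\, dm_x(y)\, d\nu(x),\qquad p>1,$$
whose Euler-Lagrange equation $\int_{\Omega_m} g_p(x,y)\, dm_x(y)=0$ on $\Omega$ automatically furnishes the antisymmetric field $g_p(x,y):=|u_{p,\psi}(y)-u_{p,\psi}(x)|^{p-2}(u_{p,\psi}(y)-u_{p,\psi}(x))$. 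Truncation of $u_p$ by $\|\psi\|_\infty$ does not increase $\mathcal{J}_{\psi,p}$, so $\|u_p\|_\infty\le\|\psi\|_\infty$ and $\|g_p\|_\infty\le(2\|\psi\|_\infty)^{p-1}$ remain uniformly bounded. The $p$-Poincar\'e inequality gives a uniform bound on $\|u_p\|_{L^p(\Omega,\nu)}$, so along a subsequence $u_p\to\tilde u$ in $L^1(\Omega,\nu)$ and $g_p\rightharpoonup g$ weakly-$*$ in $L^\infty$ with $\|g\|_\infty\le 1$. Lower semicontinuity of $\mathcal{J}_\psi$ identifies $\tilde u$ as a minimizer, which by hypothesis we may take equal to $u$, and both \eqref{lasim} and \eqref{1-lapla.var} pass to the limit.

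The main obstacle is recovering the sign condition \eqref{1-lapla2.var}. For each $p$ the exact identity
$$\int_{\Omega_m}\int_{\Omega_m} g_p(x,y)(u_{p,\psi}(y)-u_{p,\psi}(x))\, dm_x(y)\, d\nu(x)=2p\,\mathcal{J}_{\psi,p}(u_p)$$
holds; the left-hand side converges to $\int\int g(x,y)(u_\psi(y)-u_\psi(x))\, dm_x\, d\nu$ by weak-$*$/strong convergence. For the right-hand side, minimality of $u_p$ combined with dominated convergence gives $\limsup 2p\,\mathcal{J}_{\psi,p}(u_p)\le\lim 2p\,\mathcal{J}_{\psi,p}(u)=2\mathcal{J}_\psi(u)$, while Jensen on the finite measure space $\nu\otimes m_x$ together with lower semicontinuity of $\mathcal{J}_\psi$ yields the matching $\liminf$. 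Hence $\int\int g(x,y)(u_\psi(y)-u_\psi(x))\, dm_x\, d\nu=\int\int|u_\psi(y)-u_\psi(x)|\, dm_x\, d\nu$, and combined with the pointwise bound $|g(x,y)(u_\psi(y)-u_\psi(x))|\le|u_\psi(y)-u_\psi(x)|$ this forces $g(x,y)(u_\psi(y)-u_\psi(x))=|u_\psi(y)-u_\psi(x)|$ $\nu\otimes m_x$-a.e., which is \eqref{1-lapla2.var}. The Poincar\'e inequality is indispensable throughout: without it the approximating $u_p$ may blow up in $L^1$ and the weak limit may fail to inherit the prescribed boundary data.
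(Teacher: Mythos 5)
Your first direction (solution $\Rightarrow$ minimizer) is correct and is essentially the paper's argument: antisymmetry of $g$ together with the reversibility of $\nu$ turns $\int\!\!\int g(x,y)(w_\psi(y)-w_\psi(x))\,dm_x\,d\nu$ into a quantity depending only on $g$ and the boundary datum, and the sign condition \eqref{1-lapla2.var} identifies that constant as $2\mathcal{J}_\psi(u)$, while $\|g\|_\infty\le 1$ bounds it by $2\mathcal{J}_\psi(w)$ for any competitor.

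The second direction has a genuine gap at the step ``lower semicontinuity of $\mathcal{J}_\psi$ identifies $\tilde u$ as a minimizer, which by hypothesis we may take equal to $u$.'' You cannot take $\tilde u=u$: the functional $\mathcal{J}_\psi$ is convex but not strictly convex, so its minimizers are in general non-unique, and the limit $\tilde u$ of the $p$-Laplacian approximations is whatever the scheme happens to produce, not the arbitrary minimizer $u$ you were handed. Consequently the claim ``the left-hand side converges to $\int\!\!\int g(x,y)(u_\psi(y)-u_\psi(x))\,dm_x\,d\nu$'' is really a statement about $\tilde u$, since the left-hand side involves $u_{p,\psi}\to\tilde u_\psi$, and your limiting argument only establishes the sign condition \eqref{1-lapla2.var} for $\tilde u$; in other words it re-proves the existence theorem (Theorem \ref{theo1.intro}) rather than characterizing the given $u$.

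The repair is short and is exactly what the paper does. Let $\overline u$ be the solution produced by Theorem \ref{theo1.intro}, with its associated field $g$. By the first direction $\overline u$ minimizes $\mathcal{J}_\psi$, so $\mathcal{J}_\psi(\overline u)=\mathcal{J}_\psi(u)$. Your own direction-one computation shows that for every admissible $w$ (i.e.\ $w_\psi=\psi$ on $\partial_m\Omega$) the quantity $\frac12\int\!\!\int g(x,y)(w_\psi(y)-w_\psi(x))\,dm_x\,d\nu$ equals one and the same constant $C(g,\psi)$; taking $w=\overline u$ shows this constant equals $\mathcal{J}_\psi(\overline u)$, hence taking $w=u$ gives $\frac12\int\!\!\int g(x,y)(u_\psi(y)-u_\psi(x))\,dm_x\,d\nu=\mathcal{J}_\psi(u)=\frac12\int\!\!\int|u_\psi(y)-u_\psi(x)|\,dm_x\,d\nu$. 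Combined with $\|g\|_\infty\le1$ this forces $g(x,y)\in\mathrm{sign}(u_\psi(y)-u_\psi(x))$ $(\nu\otimes m_x)$-a.e., and since $g$ already satisfies \eqref{lasim} and \eqref{1-lapla.var}, $u$ solves \eqref{1700.intro}. A further minor point: $2p\,\mathcal{J}_{\psi,p}(u)$ need not converge to $2\mathcal{J}_\psi(u)$ for a merely $L^1$ minimizer (the $p$-energy may even be infinite); one should first replace $u$ by its truncation at level $\|\psi\|_\infty$, which is still a minimizer. This issue disappears entirely in the repaired argument.
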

	In fact, it is enough to assume that there exist solutions to \eqref{1700.intro}; hence, we will start by proving the existence of  solutions  to
	\eqref{1700.intro}. We prove this existence result by taking limits as $p$ goes to $1$ in the following  Dirichlet problem
	\begin{equation}\label{pLPLc}\left\{\begin{array}{ll}
	\D - \int_{\Omega_m}|(u_p)_\psi(y)-u_p(x)|^{p-2}((u_p)_\psi(y)-u_p(x))dm_x(y)= 0,
	&x\in\Omega,\\[10pt] u_p=\psi,& x\in \partial_m\Omega.
	\end{array}\right.\end{equation}
	
	\begin{theorem}\label{theo1} Given $\psi\in
		L^\infty(\partial_m\Omega)$ and $p >1$.  If $(m,\nu)$ satisfies a $p$-Poincar\'{e} inequality in $\Omega$, then  there exists a
		solution $u_p$  of problem \eqref{pLPLc}. Moreover,
		\begin{equation}\label{MaxP}\|u_p\|_\infty\leq \Vert\psi\Vert_\infty \,.\end{equation}
	\end{theorem}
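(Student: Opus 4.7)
The plan is to realize $u_p$ as the (unique) minimizer of the strictly convex $p$-Dirichlet functional
\begin{equation*}
F_p(u) := \frac{1}{2p} \int_{\Omega_m}\int_{\Omega_m} |u_\psi(y) - u_\psi(x)|^p \, dm_x(y)\, d\nu(x), \quad u \in L^p(\Omega,\nu),
\end{equation*}
and then to recover \eqref{pLPLc} as its Euler--Lagrange equation, with a truncation argument for \eqref{MaxP}. Existence comes from the direct method. Convexity of $F_p$ is immediate, and lower semicontinuity in $L^p(\Omega,\nu)$ follows by Fatou along an a.e.-convergent subsequence. The delicate point is coercivity, supplied by the $p$-Poincar\'e inequality: splitting $\Omega_m \times \Omega_m = (\Omega \times \Omega_m) \cup (\partial_m\Omega \times \Omega_m)$ and using reversibility of $\nu$ on the boundary pieces, one bounds
\begin{equation*}
\int_\Omega\int_{\Omega_m}|u_\psi(y)-u(x)|^p \, dm_x(y)\, d\nu(x) \leq 2p\,F_p(u) + C\,\|\psi\|_\infty^p,
\end{equation*}
so the Poincar\'e inequality yields $\|u\|_{L^p(\Omega,\nu)}^p \le C(F_p(u)+1)$. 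Any minimizing sequence is therefore $L^p$-bounded and admits a weakly convergent subsequence whose limit $u_p$ is a minimizer.

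For the Euler--Lagrange equation, fix $\varphi \in L^\infty(\Omega,\nu)$ and write $\varphi_0$ for its extension by zero to $\partial_m\Omega$. Since $|\cdot|^p$ is $C^1$ on $\R$ for $p>1$, one can differentiate $t \mapsto F_p(u_p + t\varphi)$ at $t=0$ under the integral (dominated convergence is justified by the uniform $L^p$ bound on $u_p+t\varphi$ for $|t|\le 1$). The resulting first variation is
\begin{equation*}
\frac{1}{2}\int_{\Omega_m}\int_{\Omega_m} |(u_p)_\psi(y)-(u_p)_\psi(x)|^{p-2}\big((u_p)_\psi(y)-(u_p)_\psi(x)\big)\big(\varphi_0(y)-\varphi_0(x)\big)\, dm_x(y)\, d\nu(x),
\end{equation*}
and the reversibility identity $dm_x(y)\,d\nu(x)=dm_y(x)\,d\nu(y)$ combined with $|a|^{p-2}a=-|{-a}|^{p-2}(-a)$ symmetrizes the two halves, reducing the vanishing of the variation to
\begin{equation*}
\int_\Omega \varphi(x) \int_{\Omega_m} |(u_p)_\psi(y)-u_p(x)|^{p-2}\big((u_p)_\psi(y)-u_p(x)\big)\, dm_x(y)\, d\nu(x) = 0
\end{equation*}
for every $\varphi \in L^\infty(\Omega,\nu)$, which is precisely \eqref{pLPLc}.

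For \eqref{MaxP}, set $M := \|\psi\|_\infty$ and $\widetilde{u_p}(x) := \max(-M,\min(M,u_p(x)))$. Since $|\psi|\le M$ on $\partial_m\Omega$, the function $(\widetilde{u_p})_\psi$ coincides with $\max(-M,\min(M,\cdot))\circ (u_p)_\psi$, and the $1$-Lipschitzness of this truncation gives $F_p(\widetilde{u_p}) \le F_p(u_p)$, so $\widetilde{u_p}$ is again a minimizer. Uniqueness of the minimizer --- if $u,v$ both minimize then strict convexity of $|t|^p$ forces $u_\psi(y)-u_\psi(x)=v_\psi(y)-v_\psi(x)$ for $(\nu\otimes m_x)$-a.e.\ $(x,y)$, so $w:=u-v$ extended by $0$ on $\partial_m\Omega$ satisfies $w_0(y)=w_0(x)$ a.e., and the $p$-Poincar\'e inequality applied to $w$ with zero boundary data forces $w\equiv 0$ --- then yields $u_p = \widetilde{u_p}$, i.e., $\|u_p\|_\infty \le M$. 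I expect the main obstacle to be the coercivity bookkeeping: matching the Poincar\'e integration domain $\Omega\times\Omega_m$ with the functional's domain $\Omega_m\times\Omega_m$ via reversibility, while isolating the $\psi$-dependent boundary contribution on $\partial_m\Omega \times \partial_m\Omega$ as a bounded constant.
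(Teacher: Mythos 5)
Your existence argument and derivation of the Euler--Lagrange equation follow the paper's proof essentially verbatim: the same functional $\mathcal{F}_p$, coercivity from the $p$-Poincar\'e inequality, weak lower semicontinuity, and the first variation obtained by perturbing the minimizer (the paper takes one-sided limits $\lambda\downarrow 0$ and $\lambda<0$ separately rather than differentiating, which is cosmetic). Where you genuinely diverge is the maximum principle \eqref{MaxP}. The paper tests the equation against $(u_p-M)^+$ with $M=\Vert\psi\Vert_\infty$ and invokes the pointwise monotonicity inequality $\vert r-s\vert^{p-2}(r-s)(r^+-s^+)\geq \vert r^+-s^+\vert^p$ to conclude that the truncated excess has zero energy, then applies the Poincar\'e inequality with zero boundary data. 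You instead truncate $u_p$ at level $M$, observe that the $1$-Lipschitz truncation does not increase the energy (the $\mathcal{F}_p$-analogue of Proposition \ref{lemLipsch}, using that $\vert\psi\vert\le M$ so the truncation fixes the boundary values), and identify the truncation with $u_p$ via uniqueness of the minimizer, which you derive correctly from strict convexity of $\vert t\vert^p$ together with the Poincar\'e inequality applied to the difference of two minimizers with zero boundary data. Both routes are valid and both ultimately lean on the Poincar\'e inequality in the same way; yours has the side benefit of establishing uniqueness of $u_p$, which the paper never states, while the paper's test-function argument avoids any appeal to uniqueness and adapts directly to one-sided bounds. One bookkeeping remark: your coercivity estimate is even simpler than you anticipate, since $\Omega\times\Omega_m\subset\Omega_m\times\Omega_m$ and the integrand is nonnegative, so $\int_\Omega\int_{\Omega_m}\vert u_\psi(y)-u(x)\vert^p\,dm_x(y)\,d\nu(x)\le 2p\,\mathcal{F}_p(u)$ holds with no boundary correction term at all.
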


	\begin{proof}
		Let us consider the functional
		$$\mathcal{F}_p(u):= \frac{1}{2p} \int_{\Omega_m}\int_{\Omega_m}|u_\psi(y)-u_\psi(x)|^{p}dm_x(y)d\nu(x),
		\quad u \in L^p(\Omega).$$
		Set $$\theta := \inf_{u \in L^p(\Omega,\nu)} \mathcal{F}_p(u),$$ and let $\{ u_n \}$ be a minimizing sequence. Then,
		$$\theta = \lim_{n \to \infty} \mathcal{F}_p(u_n)  \quad \hbox{and} \quad K:= \sup_{n \in \NN} \mathcal{F}_p(u_n)  < + \infty\,.$$
		Since $(m,\nu)$ satisfies a $p$-Poincar\'{e} inequality,
		$$ \lambda\int_\Omega \left|u_n(x) \right|^p \, dx\le  \int_{\Omega}
		\int_{\Omega_m}  |u_{\psi}(y)-u(x)|^q \, dm_xy \,
		d\nu(x)x+\int_{\partial_m\Omega}|\psi (y)|^q \, d\nu(y)$$ $$= 2p \mathcal{F}_p(u_n)  +\int_{\partial_m\Omega}|\psi (y)|^p \,
		dy\leq 2pK+\int_{\partial_m\Omega}|\psi (y)|^q \, d\nu(y). $$
		Therefore,  we obtain that
		$$\int_{\Omega} |u_n(x)|^p \, dx \leq C \quad \forall n \in \NN.$$
		Hence, up to a subsequence, we have
		$$u_n \rightharpoonup u_p \quad \hbox{in} \ \  L^p(\Omega).$$
		Furthermore, using the weak lower semi-continuity of the functional $\mathcal{F}_p$, we get
		$$\mathcal{F}_p(u_p) = \inf_{u \in L^p(\Omega)} \mathcal{F}_p(u).$$
		Thus, given $\lambda >0$ and $w \in L^p(\Omega)$ (we extend it to $\partial_m\Omega$ by zero),
		we have
		$$0 \leq \frac{\mathcal{F}_p(u_p + \lambda w)- \mathcal{F}_p(u_p)}{\lambda}, $$
		or equivalently,
		{\small $$0 \leq \frac12\int_{\Omega_m}\int_{\Omega_m}
			\left[\frac{|(u_p)_{\psi}(y) + \lambda w_{\psi}(y)-((u_p)_{\psi}(x) + \lambda w_{\psi}(x))|^{p} - |(u_p)_{\psi}(y)-(u_p)_{\psi}(x)|^{p} }{p\lambda}\right] \qquad\qquad$$
			$$\qquad\qquad\qquad\qquad\qquad\qquad\qquad\qquad\qquad\qquad\qquad\qquad\qquad\qquad\qquad\qquad\qquad dm_x(y)d\nu(x).$$}
		Now, since $p >1$, we pass  to the limit as $\lambda \downarrow 0$ to deduce
		$$0\leq \frac{1}{2}\int_{\Omega_m}\int_{\Omega_m} |(u_p)_{\psi}(y)-(u_p)_{\psi}(x)|^{p-2}((u_p)_{\psi}(y)-(u_p)_{\psi}(x)) \times \qquad \qquad \qquad \qquad \qquad $$
		$$ \qquad \qquad\qquad \qquad \qquad \qquad \qquad \qquad \qquad\qquad\qquad \times ((w)_{\psi}(y)-(w)_{\psi}(x)) \, dm_x(y) \, d\nu(x).$$
		Taking $\lambda <0$ and proceeding as above we obtain the reverse inequality. Consequently, we conclude that
		$$0= \frac{1}{2}\int_{\Omega_m}\int_{\Omega_m} |(u_p)_{\psi}(y)-(u_p)_{\psi}(x)|^{p-2}((u_p)_{\psi}(y)-(u_p)_{\psi}(x)) \times \qquad\qquad\qquad\qquad\qquad\qquad\qquad\qquad\qquad $$
		$$ \qquad\qquad\qquad\qquad\qquad\qquad\qquad\qquad\qquad\qquad \times ((w)_{\psi}(y)-(w)_{\psi}(x)) dm_x(y)d\nu(x) = $$
		$$ = - \int_{\Omega_m}\int_{\Omega_m} |(u_p)_{\psi}(y)-(u_p)_{\psi}(x)|^{p-2}((u_p)_{\psi}(y)-(u_p)_{\psi}(x)) dm_x(y) (w)_{\psi}(x) d\nu(x).$$
		In particular, since $w=0$ in $\partial_m\Omega$, it follows that
		$$ 0=  -\int_{\Omega}\int_{\Omega_m}|(u_p)_{\psi}(y)- u_p (x)|^{p-2}((u_p)_{\psi}(y)- u_p (x)) dm_x(y) w(x) d\nu(x),$$
		which shows that $u_p$ is a solution of \eqref{pLPLc}. To finish let us see that \eqref{MaxP} holds.
		
		Set $M:=\Vert\psi\Vert_\infty$, multiply   equation  \eqref{pLPLc} by
		$(u_p-M)^+$ and integrate over $\Omega$ to obtain
		$$0 = - \int_{\Omega_m}\int_{\Omega_m}|(u_p)_{\psi}(y)-(u_p)_{\psi}(x)|^{p-2}
		((u_p)_{\psi}(y)-(u_p)_{\psi}(x))dm_x(y)((u_p)_{\psi}-M)^+(x)d\nu(x),$$
		or equivalently
			$$0=\frac{1}{2}\int_{\Omega_m}\int_{\Omega_m}\left(|(u_p)_{\psi}(y)-(u_p)_{\psi}(x)|^{p-2}((u_p)_{\psi}(y)-(u_p)_{\psi}(x))\right) \times \qquad\qquad \qquad\qquad
			$$ $$\qquad\qquad \qquad\qquad \qquad\qquad  \times \left(((u_p)_{\psi}-M)^+(y)-((u_p)_{\psi}-M)^+(x)\right)dm_x(y)d\nu(x).$$
		In addition, since
		$$\vert r - s\vert^{p-2} (r - s) (r^+ - s^+) \geq \vert r^+ - s^+\vert^{p},$$
		it holds that
		$$\int_{\Omega_m}\int_{\Omega_m}\left|((u_p)_{\psi}-M)^+(y)-((u_p)_{\psi}-M)^+(x)\right|^{p}dm_x(y)d\nu(x) \leq 0.$$
		Then, using again the $p$-Poincar\'{e} inequality we get
		$$\int_{\Omega} \vert (u_p-M)^+(x) \vert^p \, d\nu(x) = 0.$$
		This shows that $u_p \leq M$ $\nu$-a.e. in $\Omega$ for any $p> 1$.
		Analogously, we can verify that $u_p \geq -M$ $\nu$-a.e. in $\Omega$. Thus $\label{acobb}\Vert u_p\Vert_\infty\le M$ for every $p > 1$.
	\end{proof}
	
We are now ready to prove the existence of solutions to problem	\eqref{1700.intro}.
	
\begin{theorem}\label{theo1.intro}
Given $\psi\in L^\infty(\partial_m\Omega)$.  If $(m,\nu)$ satisfies a $p$-Poincar\'{e} inequality in $\Omega$ for all $p > 1$, then there exists a  solution to problem~\eqref{1700.intro}.
\end{theorem}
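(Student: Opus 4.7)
The strategy is a nonlocal analogue of the vanishing $(p-1)$ approach: take the limit $p\to 1^+$ of the solutions $u_p$ of \eqref{pLPLc} provided by Theorem~\ref{theo1}. Setting $M:=\|\psi\|_\infty$ and
\begin{equation*}
g_p(x,y):=|(u_p)_\psi(y)-(u_p)_\psi(x)|^{p-2}\bigl((u_p)_\psi(y)-(u_p)_\psi(x)\bigr),
\end{equation*}
the bound \eqref{MaxP} yields $\|g_p\|_\infty\leq(2M)^{p-1}$. By Banach--Alaoglu, one extracts a sequence $p_n\downarrow 1$ with $u_{p_n}\rightharpoonup u$ weakly in $L^2(\Omega,\nu)$ and $g_{p_n}\overset{\ast}{\rightharpoonup} g$ weakly-$\ast$ in $L^\infty(\Omega_m\times\Omega_m,\nu\otimes m_x)$, satisfying $\|u\|_\infty\leq M$ and $\|g\|_\infty\leq 1$. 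The antisymmetry condition \eqref{lasim} for $g$ is transferred from $g_p$ using the reversibility of $\nu$, and testing the weak form of \eqref{pLPLc} against any $\phi\in L^\infty(\Omega)$ extended by zero on $\partial_m\Omega$ and letting $p_n\to 1^+$ yields the divergence equation \eqref{1-lapla.var}.

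The delicate point is the sign condition \eqref{1-lapla2.var}. Since $\|g\|_\infty\leq 1$ implies $g(x,y)(u_\psi(y)-u_\psi(x))\leq|u_\psi(y)-u_\psi(x)|$ pointwise, it suffices to prove the energy identity
\begin{equation*}
\int_{\Omega_m}\!\!\int_{\Omega_m} g(x,y)\bigl(u_\psi(y)-u_\psi(x)\bigr)\,dm_x(y)\,d\nu(x)=2\,TV_m(u_\psi).
\end{equation*}
For the right-hand side, the minimality of $u_p$ for $\mathcal{F}_p$ combined with Jensen's inequality on the finite-measure space $(\Omega_m\times\Omega_m,\nu\otimes m_x)$ yields, for every admissible $v$,
\begin{equation*}
2\,TV_m((u_p)_\psi)\leq\nu(\Omega_m)^{1/p'}\bigl(2p\,\mathcal{F}_p(u_p)\bigr)^{1/p}\leq\nu(\Omega_m)^{1/p'}\bigl(2p\,\mathcal{F}_p(v)\bigr)^{1/p}.
\end{equation*}
Letting $p_n\to 1^+$, the lower semi-continuity of $TV_m$ (Proposition~\ref{lemsc1}) shows both that $u$ is a minimizer of $TV_m(u_\psi)$ among admissible functions and that $\lim_n 2\,TV_m((u_{p_n})_\psi)=2\,TV_m(u_\psi)$. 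Sandwiching
\begin{equation*}
\nu(\Omega_m)^{-(p-1)}\bigl(2\,TV_m((u_p)_\psi)\bigr)^p\leq 2p\,\mathcal{F}_p(u_p)\leq(2M)^{p-1}\cdot 2\,TV_m((u_p)_\psi)
\end{equation*}
then gives $\lim_n 2p_n\,\mathcal{F}_{p_n}(u_{p_n})=2\,TV_m(u_\psi)$.

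To conclude, substitute $w=u_{p_n}$ (extended by zero on $\partial_m\Omega$) into the weak form of \eqref{pLPLc}; this rewrites the left-hand side as
\begin{equation*}
2p_n\,\mathcal{F}_{p_n}(u_{p_n})=\int_{\Omega_m}\!\!\int_{\Omega_m} g_{p_n}(x,y)\bigl(\tilde\psi(y)-\tilde\psi(x)\bigr)\,dm_x(y)\,d\nu(x),
\end{equation*}
where $\tilde\psi:=\psi$ on $\partial_m\Omega$ and $\tilde\psi:=0$ on $\Omega$. Since $\tilde\psi(y)-\tilde\psi(x)\in L^1(\nu\otimes m_x)$, weak-$\ast$ convergence of $g_{p_n}$ gives $\int\!\!\int g(x,y)(\tilde\psi(y)-\tilde\psi(x))\,dm_x\,d\nu=2\,TV_m(u_\psi)$. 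Now the divergence equation \eqref{1-lapla.var} for $g$, tested against $u$ extended by zero, supplies $\int\!\!\int g(x,y)\bigl((u_\psi-\tilde\psi)(y)-(u_\psi-\tilde\psi)(x)\bigr)\,dm_x\,d\nu=0$; adding these identities proves the energy identity and hence the sign condition. The hardest step is the energy convergence $2p_n\,\mathcal{F}_{p_n}(u_{p_n})\to 2\,TV_m(u_\psi)$, which hinges on the minimality of $u_p$, the lower semi-continuity of $TV_m$, and Jensen's inequality applied in both directions to sandwich the $L^p$-energies.
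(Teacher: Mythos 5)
Your proof is correct, but it identifies the limit $g$ with $\mathrm{sign}(u_\psi(y)-u_\psi(x))$ by a genuinely different mechanism than the paper. The paper establishes the key identity \eqref{HHg} via Minty's monotonicity trick: it tests the monotone operator inequality against an arbitrary $\rho\in L^\infty(\Omega)$, passes to the limit using \eqref{otra}, and then takes variations $\rho=u\pm\lambda u$. You instead prove strong convergence of the energies, $2p_n\mathcal{F}_{p_n}(u_{p_n})\to 2\,TV_m(u_\psi)$, by sandwiching: Jensen/H\"older on the finite measure $\nu\otimes m_x$ gives the lower bound, the uniform $L^\infty$ bound \eqref{MaxP} gives the upper bound, and the two are matched using the minimality of $u_p$ for $\mathcal{F}_p$ together with the lower semicontinuity of $TV_m$ from Proposition \ref{lemsc1}; the energy identity then follows from the weak form of \eqref{pLPLc} tested with $w=u_{p_n}$ and the already-established divergence equation and antisymmetry for $g$. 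Both routes share the same compactness setup (the starting point \eqref{otra} of the paper is exactly your identity $2p_n\mathcal{F}_{p_n}(u_{p_n})=\int\!\!\int g_{p_n}(\tilde\psi(y)-\tilde\psi(x))$, after applying antisymmetry), but your variational argument buys something extra: it shows along the way that $\lim_n TV_m((u_{p_n})_\psi)=TV_m(u_\psi)$ and that $u$ minimizes $\mathcal{J}_\psi$, i.e.\ it yields one implication of Theorem \ref{teo.varia} for free, whereas the paper obtains that implication separately. Two small points worth spelling out: when passing to the limit in $2p_n\mathcal{F}_{p_n}(v)\to 2\,TV_m(v_\psi)$ you should take $v=u$ (bounded, and with $TV_m(u_\psi)<\infty$ by the a priori bound $\mathcal{F}_{p}(u_p)\le\mathcal{F}_p(0)$ plus lower semicontinuity) so that dominated convergence applies; and the weak-$*$ closedness of the antisymmetry constraint should be justified via duality against $h(x,y)-h(y,x)$, $h\in L^1(\nu\otimes m_x)$, using that $\nu\otimes m_x$ is symmetric by reversibility.
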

	
\begin{proof}
By Theorem \ref{theo1}, there exists a subsequence $p_n\to 1$, still denoted by $p$, such that
$$u_p \rightharpoonup u\quad\hbox{weakly in  } L^1(\Omega)$$
and
$$|(u_p)_{\psi}(y)-(u_p)_{\psi}(x)|^{p-2}((u_p)_{\psi}(y)-(u_p)_{\psi}(x)) \rightharpoonup g(x,y) \quad\hbox{weakly in } L^1(\Omega_m\times\Omega_m). $$
The function $g$ is $L^\infty$-bounded by $1$, satisfies
$$	-\int_{\Omega_m}g(x,y)\,dm_x(y)=0 \quad \mbox{for }\nu-\mbox{a.e }x\in \Omega,$$
and, moreover, it is antisymmetric. In order to see that
$$g(x,y)\in \mbox{sign}(u_\psi(y)-u_\psi(x))\quad \hbox{for  $(\nu\otimes dm_x)$-a.e $(x,y)$ \ in $\Omega_m\times \Omega_m$},$$
we need to prove that
\begin{equation}\label{HHg}
-\int_{\Omega_m}\int_{\Omega_m}g(x,y)\,dm_x(y) u_\psi(x)\,d\nu(x)=\frac12\int_{\Omega_m}\int_{\Omega_m}|u_\psi(y)-u_\psi(x)|\,dm_x(y)d\nu(x).
\end{equation}
In fact, it holds that
$$
\begin{array}{l}
\displaystyle
\frac12 \int_{\Omega_m}\int_{\Omega_m} |(u_p)_{\psi}(y)-(u_p)_{\psi}(x)|^{p}\, dm_x(y)d\nu(x) = \\[10pt]
\qquad \displaystyle =-\int_{\Omega_m}\int_{\Omega_m}|(u_p)_{\psi}(y)-(u_p)_{\psi}(x)|^{p-2}((u_p)_{\psi}(y)-(u_p)_{\psi}(x))\, dm_x(y)
(u_p)_{\psi}(x)d\nu(x) =
\\[10pt]
\qquad \displaystyle
=-\int_{\partial_m\Omega}\int_{\Omega_m} |u_p(y)-u_p(x)|^{p-2}(u_p(y)-u_p(x))\, dm_x(y) \psi(x)\,d\nu(x).
\end{array}
$$
Therefore,
\begin{equation}\label{otra}
		\begin{array}{l}
		\displaystyle
		\lim_{p \rightarrow 1}\frac12 \int_{\Omega_m}\int_{\Omega_m}|u_p(y)-u_p(x)|^{p}\,
		dm_x(y)d\nu(x)  \displaystyle
		=
		-\int_{\partial_m\Omega}\int_{\Omega_m}g(x,y)\, dm_x(y)
		\psi(x)\,d\nu(x) =
		\\[10pt]
		\qquad \displaystyle
		=-\int_{\Omega_m}\int_{\Omega_m}g(x,y)\, dm_x(y)
		u_\psi(x)\,d\nu(x).
		\end{array}
\end{equation}
Now, by monotonicity it easy to see that, for all $\rho \in L^{\infty}(\Omega)$, $$
		\begin{array}{l}
		\displaystyle
		- \int_{\Omega_m} \int_{\Omega_m}
		\vert \rho_\psi(y)-\rho_\psi(x)\vert^{p-2}(\rho_\psi(y) - \rho_\psi(x)) \,
		dm_x(y)\, ((u_{p})_\psi(x)-\rho_\psi(x))\, d\nu(x) \le \\[10pt]
		\displaystyle \le  - \int_{\Omega_m} \int_{\Omega_m} \vert
		(u_{p})_\psi(y)-(u_{p})_\psi(x)\vert^{p-2} \times \\
		\displaystyle \qquad \qquad \qquad \qquad \qquad \qquad \times ((u_{p})_\psi(y) - (u_{p})_\psi(x)) \, dm_x(y) ((u_{p})_\psi(x)-\rho_\psi(x)) \, d\nu(x).
		\end{array}
		$$
		Taking limits as $p \to 1$ and invoking \eqref{otra} we get
		$$
		\begin{array}{l}
		\displaystyle -\int_{\Omega_m}\int_{\Omega_m} \,
		\hbox{sign}_0(\rho_\psi(y)-\rho_\psi(x))\,
		dm_x(y)\, (u_\psi(x)-\rho_\psi(x))\, d\nu(x) \le \\[10pt]
		\qquad \displaystyle \le -\int_{\Omega_m}\int_{\Omega_m}
		g(x,y)\, dm_x(y)\, (u_\psi(x)-\rho_\psi(x))\, d\nu(x),
		\end{array}
		$$
		where
		$$
		\mbox{sign}_0(z)=\left\{\begin{array}{lll}1\quad&\mbox{if }z>0,\\[0.1cm]
		0&\mbox{if }z=0,\\[0.1cm]
		-1&\mbox{if }z<0.
		\end{array}\right.
		$$
		Taking $\rho=u\pm\lambda u$, $\lambda>0$, dividing by $\lambda$, and letting
		$\lambda \to 0$, we obtain \eqref{HHg}, which finishes the proof.
	\end{proof}

	\begin{proof}[Proof of Theorem~\ref{teo.varia}]  Let $u$ be a solution of problem
		\eqref{1700.intro}. Then, there exists
		$g\in L^\infty(\Omega_m\times
		\Omega_m)$ with $\|g\|_\infty\leq 1$ verifying   \eqref{lasim},
		\eqref{1-lapla2.var} and \eqref{1-lapla.var}.

		Given $w \in L^1(\Omega, \nu)$, multiplying \eqref{1-lapla.var} by
		$w(x) -u(x)$, integrating,  and having in mind
		\eqref{1-lapla2.var} and the antisymmetry of $g$, \eqref{lasim}, we
		get
		$$
		\begin{array}{l}
		\displaystyle
		0= -\int_{\Omega_m}\int_{\Omega_m}g(x,y)dm_x(y) (w_{\psi}(x) -u_\psi(x))d\nu(x) \\[10pt]
		\quad \displaystyle
		= \frac12 \int_{\Omega_m}\int_{\Omega_m}g(x,y)[(w_\psi(y)-w_\psi(x)) -
		(u_\psi(y)-u_\psi(x))]dm_x(y)d\nu(x)
		\\[10pt]
		\quad \displaystyle
		\leq
		\frac12\int_{\Omega_m}\int_{\Omega_m}|w_\psi(y)-w_\psi(x)|dm_x(y)d\nu(x)
		-
		\frac12\int_{\Omega_m}\int_{\Omega_m}|u_\psi(y)-u_\psi(x)|dm_x(y)d\nu(x)
		\\[10pt]
		\quad \displaystyle
		= \mathcal{J}_{\psi} (w) - \mathcal{J}_{\psi} (u).
		\end{array}
		$$
		Therefore,
		$u$ is a minimizer of $\mathcal{J}_{\psi}$.
		
		Assume now that $u$ minimizes the functional  $ \mathcal{J}_{\psi}$.   Theorem~\ref{theo1.intro} shows the existence of a solution $\overline{u}$ of
		\eqref{1700.intro}. Namely, there exists $g:\Omega_m\times
		\Omega_m\to \RR$ such that $g\in L^\infty(\Omega_m\times
		\Omega_m)$, $\|g\|_\infty\leq 1$, $g(x,y)=-g(y,x)$ $(\nu\otimes dm_x)$-a.e $(x,y)$
		in $\Omega_m\times \Omega_m$,
		\begin{equation}\label{1-lapla22}
		g(x,y)\in
		\mbox{sign}(\overline{u}_\psi(y)-\overline{u}_\psi(x))\quad
		(\nu\otimes dm_x)-\mbox{a.e }(x,y)\in \Omega_m\times \Omega_m,
		\end{equation}
		and \begin{equation}\label{1-lapla2456}
		-\int_{\Omega_m}g(x,y)\,dy= 0 \quad \nu-\mbox{a.e }x\in \Omega.
		\end{equation}
		Since $u$ is a minimizer of $ \mathcal{J}_{\psi}$, we have
		$$ \mathcal{J}_{\psi} (\overline{u}) - \mathcal{J}_{\psi} (u)= 0.$$
		
		On the other hand, arguing as in the other implication, we
		obtain that
		$$
		\begin{array}{l}
		\displaystyle
		0= -\int_{\Omega_m}\int_{\Omega_m}g(x,y)\,dm_x(y)(\overline{u}_\psi(x)-u_\psi(x))d\nu(x)
		\\[10pt]
		\quad \displaystyle
		=\frac12 \int_{\Omega_m}\int_{\Omega_m}g(x,y)[(\overline{u}_{\psi}(y)-\overline{u}_{\psi}(x)) -
		(u_\psi(y)-u_\psi(x))]dm_x(y)d\nu(x)
		\\[10pt]
		\quad \displaystyle
		=
		\frac12\int_{\Omega_m}\int_{\Omega_m}|\overline{u}_\psi(y)-\overline{u}_\psi(x)|dm_x(y)d\nu(x)
\\[10pt]
		\quad \displaystyle
		-
		\frac12\int_{\Omega_m}\int_{\Omega_m}g(x,y)(u_\psi(y)-u_\psi(x))dm_x(y)d\nu(x)
		\\[10pt]
		\quad \displaystyle
		=\mathcal{J}_{\psi} (\overline{u})-\frac12\int_{\Omega_m}\int_{\Omega_m}g(x,y)(u_\psi(y)-u_\psi(x))dm_x(y)d\nu(x).
		\end{array}
		$$
		Therefore,
		$$\frac12\int_{\Omega_m}\int_{\Omega_m}g(x,y)(u_\psi(y)-u_\psi(x))dm_x(y)d\nu(x) = \frac12\int_{\Omega_m}\int_{\Omega_m}\vert u_\psi(y)-u_\psi(x)\vert dm_x(y)d\nu(x).$$
		Hence,
		$$g(x,y)\in \mbox{sign}(u_\psi(y)-u_\psi(x))\quad (\nu\otimes dm_x)-\mbox{a.e }(x,y)\in \Omega_m\times \Omega_m,$$
		which jointly with \eqref{1-lapla22} and \eqref{1-lapla2456} imply
		that $u$ is a  solution to problem \eqref{1700.intro}. In particular, a single function $g$ determines the structure of all solutions to \eqref{1700.intro}.
	\end{proof}

	\begin{theorem}   Assume that  $(m,\nu)$ satisfies a $p$-Poincar\'{e} inequality in $\Omega$ for all $p > 1$. Let $\psi \in L^\infty(X \backslash  \Omega)$  and $u \in BV_m(X)$ such that $u = \psi$ $\nu$-a.e. on $X \backslash  \Omega$. Then, the following are equivalent:
		\item[ (i)] $u_{\vert \Omega}$ is a minimizer of $\mathcal{J}_{\psi}$.
		\item[ (ii)] $u_{\vert \Omega}$ is a solution of to problem \eqref{1700.intro}.
		\item[ (ii)] $u$ is a function of $m$-least gradient in $\Omega$.
        \item[(iv)] $u_{\vert \Omega_m}$ is a solution of the $m$-least gradient problem \eqref{least1}.
	\end{theorem}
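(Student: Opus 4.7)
The plan is to reduce the four-way equivalence to two already established facts plus a single new computation. The equivalence (i) $\Leftrightarrow$ (ii) is precisely Theorem~\ref{teo.varia} (with the standing assumption on the $p$-Poincar\'e inequality), so it requires nothing new. The equivalence (i) $\Leftrightarrow$ (iv) is immediate from Proposition~\ref{equal1} once one verifies that $u_{|\Omega_m} \in BV_m(\Omega_m)$ and that $u_{|\Omega_m} = \psi$ $\nu$-a.e.\ on $\partial_m\Omega$; the latter holds because $\partial_m\Omega \subset X\setminus\Omega$ and $u = \psi$ there by assumption. Thus the only non-trivial work is to establish (iii) $\Leftrightarrow$ (iv).

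The central tool will be the decomposition identity, already recorded in the paper just after~\eqref{least1}, which by reversibility of $\nu$ reads
\begin{equation}
TV_m(v,X) = TV_m(v,\Omega_m) + C(\psi)
\end{equation}
for every $v \in BV_m(X)$ with $v = \psi$ $\nu$-a.e.\ on $X\setminus\Omega$, where
\begin{equation}
C(\psi) := \int_{\partial_m\Omega}\!\int_{X\setminus\Omega_m} |\psi(y)-\psi(x)|\,dm_x(y)\,d\nu(x) + TV_m(\psi,X\setminus\Omega_m)
\end{equation}
depends only on $\psi$. Applying this to $u$ itself shows $C(\psi) < \infty$, which guarantees that any admissible competitor for (iv), extended by $\psi$ outside $\Omega_m$, lies in $BV_m(X)$.

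For (iii) $\Rightarrow$ (iv): given any admissible $w \in BV_m(\Omega_m)$ with $w = \psi$ on $\partial_m\Omega$, extend it by $\tilde w := \psi$ on $X\setminus\Omega_m$. Then $\tilde w - u \equiv 0$ on $X\setminus\Omega$, so $g := \tilde w - u$ is admissible in the definition of $m$-least gradient. By (iii), $TV_m(u,X) \le TV_m(\tilde w, X)$, and subtracting $C(\psi)$ from both sides yields $TV_m(u_{|\Omega_m}) \le TV_m(w)$. For (iv) $\Rightarrow$ (iii): given $g \in BV_m(X)$ with $g \equiv 0$ on $X\setminus\Omega$, the restriction $w := (u+g)_{|\Omega_m}$ is admissible for~\eqref{least1} since $g$ vanishes on $\partial_m\Omega$. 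By (iv), $TV_m(u_{|\Omega_m}) \le TV_m(w)$, and adding $C(\psi)$ to both sides (noting that both $u$ and $u+g$ agree with $\psi$ on $X\setminus\Omega$) gives $TV_m(u,X) \le TV_m(u+g,X)$.

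The only delicate point is the bookkeeping in the decomposition: one must check that all three pieces are individually finite, which I would justify by applying the identity to $u$ (which lies in $BV_m(X)$ by hypothesis) and observing that each piece is nonnegative. With that in place, the rest of the argument is a clean substitution, and no further use of the Poincar\'e inequality is needed beyond what is already invoked in Theorem~\ref{teo.varia}.
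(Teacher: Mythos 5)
Your proposal is correct and follows essentially the same route as the paper: the equivalence of (i), (ii) and (iv) is delegated to Proposition~\ref{equal1} and Theorem~\ref{teo.varia}, and condition (iii) is linked to the others by translating competitors between $BV_m(X)$ and $BV_m(\Omega_m)$ using the fact that $TV_m(\cdot,X)$ and $TV_m(\cdot,\Omega_m)$ differ by a constant depending only on $\psi$ (the paper attaches (iii) to (i) rather than to (iv), which by Proposition~\ref{equal1} is the same computation). If anything, your version is slightly more careful, since you make the additive constant $C(\psi)$ and its finiteness explicit where the paper simply writes $TV_m(u)=\mathcal{J}_\psi(u_{|\Omega})$.
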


	\begin{proof} By Proposition \ref{equal1} and Theorem~\ref{teo.varia}, we already know that the conditions (i), (ii) and (iv) are
		equivalent.
		
		\noindent (i) implies (iii): Let $g \in BV_m(X)$ such that $g \equiv 0$ $\nu$-a.e. on $X \backslash \Omega$. Them $$TV_m(u) = \mathcal{J}_{\psi} (u_{\vert \Omega}) \leq \mathcal{J}_{\psi} (u_{\vert \Omega}+ g_{\vert \Omega} ) = TV_m(u+g). $$
		Therefore, $u$ is a function of $m$-least gradient in $\Omega$.
		
		\noindent (iii) implies (i): Fixed $v\in BV_m(\Omega)$, we have to see that $\mathcal{J}_{\psi} (u_{\vert \Omega}) \leq \mathcal{J}_{\psi} (v)$. Now, if $g:= v_\psi - u \in BV_m(\Omega_m)$ and $g \equiv 0$ $\nu$-a.e. on $X \backslash \Omega$, we have  $$\mathcal{J}_{\psi} (u_{\vert \Omega}) = TV_m(u) \leq TV_m(u+g) = TV_m(v_\psi) = \mathcal{J}_{\psi} (v).$$
	\end{proof}

%
%
	
	Finally, we note that Theorem \ref{thm:connectionwithlocal} implies that in the case of the random walk $m^{\nu,\varepsilon}$ we have a uniform estimate on the nonlocal gradient on some subsequence (still denoted by $u_\varepsilon$).
	
	\begin{proposition}Let $(X,d,\nu)$ be a metric measure space with doubling measure $\nu$ and supporting a 1-Poincar\'e inequality.
		Let $u_\varepsilon \in BV_{m^{\nu,\varepsilon}}(X)$ be a sequence of solutions to the nonlocal least gradient problem for boundary data $\psi \in BV(X,d,\nu) \cap L^\infty(X, \nu)$. Let $\varepsilon \rightarrow 0$. Then on a subsequence (still denoted by $u_\varepsilon$) we have
		$$ \int_X \dashint_{B(x,\varepsilon)} |(u_\varepsilon)_{\psi}(y) - (u_\varepsilon)_{\psi}(x)| \, d\nu(y) \, d\nu(x) \leq M\varepsilon.$$
	\end{proposition}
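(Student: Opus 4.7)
The plan is to recognize the left-hand side of the inequality as essentially twice the $m^{\nu,\varepsilon}$-total variation of $u_\varepsilon$ and then bound it using the minimality of $u_\varepsilon$, comparing to the admissible competitor $\psi$. Since $dm^{\nu,\varepsilon}_x(y) = \nu(B(x,\varepsilon))^{-1}\chi_{B(x,\varepsilon)}(y)\, d\nu(y)$, the inner averaged integral is exactly $\int_X |(u_\varepsilon)_\psi(y) - (u_\varepsilon)_\psi(x)|\, dm^{\nu,\varepsilon}_x(y)$. Hence the left-hand side equals $2\, TV_{m^{\nu,\varepsilon}}((u_\varepsilon)_\psi)$, and since $u_\varepsilon \equiv \psi$ outside $\Omega$ we in fact have $(u_\varepsilon)_\psi = u_\varepsilon$ on $X$.

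Next, I exploit the minimizing property. By the discussion opening Section~\ref{sec:lgpnonlocal}, $u_\varepsilon$ is a solution of the $m^{\nu,\varepsilon}$-least gradient problem \eqref{least1} if and only if it minimizes $TV_{m^{\nu,\varepsilon}}$ (on $X$, equivalently on $\Omega_{m^{\nu,\varepsilon}}$, since the two differ by a constant) among competitors agreeing with $\psi$ on $X\setminus\Omega$. Since $\psi\in BV(X,d,\nu)\cap L^\infty(X,\nu)$ is itself such a competitor, we conclude
$$TV_{m^{\nu,\varepsilon}}(u_\varepsilon) \;\le\; TV_{m^{\nu,\varepsilon}}(\psi).$$

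Finally, I invoke Theorem~\ref{thm:connectionwithlocal}: because $(X,d,\nu)$ is doubling and supports a local $1$-Poincar\'e inequality, the hypothesis $\psi\in BV(X,d,\nu)$ forces $\liminf_{\varepsilon\to 0} \varepsilon^{-1}\, TV_{m^{\nu,\varepsilon}}(\psi) < \infty$. Passing to a subsequence $\varepsilon_k\to 0$, there is a constant $C>0$ with $TV_{m^{\nu,\varepsilon_k}}(\psi) \le C\varepsilon_k$, whence the left-hand side is bounded by $2C\varepsilon_k$, giving the claim with $M=2C$. There is no substantive obstacle: the real content is already carried by Theorem~\ref{thm:connectionwithlocal}, and the argument merely combines it with the defining minimizing property of $u_\varepsilon$.
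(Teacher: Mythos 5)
Your proof is correct and follows essentially the same route as the paper's: identify the left-hand side as $2\,TV_{m^{\nu,\varepsilon}}(u_\varepsilon)$, compare with the competitor $\psi$ via minimality, and invoke Theorem~\ref{thm:connectionwithlocal} to bound $TV_{m^{\nu,\varepsilon}}(\psi)$ by a constant times $\varepsilon$ along a subsequence. No differences worth noting.
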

	
	\begin{proof}
		Notice that the left hand side of the desired inequality is $2 TV_{m^{\nu,\varepsilon}}(u_\varepsilon)$. As $u_\varepsilon$ is a minimizer of the total variation $TV_{m^{\nu,\varepsilon}}$ with boundary data $\psi$, we have
		$$ 2TV_{m^{\nu,\varepsilon}}(u_\varepsilon) \leq 2TV_{m^{\nu,\varepsilon}}(\psi).$$
		As $\psi \in BV(X)$, by Theorem \ref{thm:connectionwithlocal} on a subsequence (still denoted by $\varepsilon$) for $\varepsilon$ sufficiently close to 0 we have
		$$ 2TV_{m^{\nu,\varepsilon}}(\psi) \leq 4 \varepsilon \cdot \liminf_{\varepsilon \rightarrow 0} \frac{1}{\varepsilon} TV_{m^{\nu,\varepsilon}}(\psi) = M \varepsilon,$$
		where $M = 4 \liminf_{\varepsilon \rightarrow 0} \frac{1}{\varepsilon} TV_{m^{\nu,\varepsilon}}(\psi).$
	\end{proof}
	
\begin{remark}{\rm Let $[\mathbb{R}^N,d,m^J]$ be the metric random walk space of Example \ref{JJ}~(1) and  assume also $J(x)\ge J(y)$ if $|x|\le|y|$. Let  $\Omega$ be a smooth bounded domain in $\mathbb{R}^N$ and $\tilde\psi\in L^{\infty}(\partial \Omega)$. In \cite{MazPeRoTo} it is proved that  if we
 take a function $ \psi \in W^{1,1}(\partial_{m^{J_\epsilon}}\Omega)\cap L^\infty(\partial_{m^{J_\epsilon}}\Omega)$ such that $ \psi|_{\partial \Omega} = \tilde\psi$ and  $u_\epsilon$ is  a  solution of the  problem
	\begin{equation}
	\left\{\begin{array} {ll} \displaystyle  -\Delta^{m^{J_\epsilon}}_1 u(x)=0,& x\in\Omega,
	\\[10pt] u(x)=\psi(x),& x\in \partial_{m^{J_\epsilon}}\Omega,
	\end{array}\right.
	\end{equation}
for $J_{\varepsilon} (x) := \frac{1}{\varepsilon^{N}}
J\left(\frac{x}{\varepsilon}\right)$. Then, up to a subsequence,
$$u_\epsilon\to u\quad\hbox{in }L^1(\Omega),$$ where $u$ is a solution to
\eqref{ELe} with $h=\tilde\psi$.

In particular, if we take $$J(x):= \frac{1}{\mathcal{L}^N(B(0,1))}\1_{B(0,1)}(x),$$ then
$$J_\epsilon(x) = \frac{1}{\mathcal{L}^N(B(0,\epsilon))}\1_{B(0,\epsilon)}(x).$$
Hence,
$$m^{{\mathcal{L}^N,\epsilon}}_x = m^{J_{\epsilon}}_x,$$
and, consequently, if  $u_\epsilon$ is  a  solution of the  problem
	\begin{equation}
	\left\{\begin{array} {ll} \displaystyle  -\Delta^{m^{{\mathcal{L}^N,\epsilon}}}_1 u(x)=0,& x\in\Omega,
	\\[10pt] u(x)=\psi(x),& x\in \partial_{m^{{\mathcal{L}^N,\epsilon}}}\Omega,
	\end{array}\right.
	\end{equation} up to a subsequence,
$$u_\epsilon\to u\quad\hbox{in }L^1(\Omega),$$ where $u$ is a solution to \eqref{ELe} with $h=\tilde\psi$.

Therefore, it is natural to pose the following problem: Let $(X,d, \mu)$ be a metric measure space and let $m^{\mu,\epsilon}$ be the  $\epsilon$-step random walk associated to $\mu$, that is,
 $$m^{\mu,\epsilon}_x:= \frac{\mu \res B(x, \epsilon)}{\mu(B(x, \epsilon))}.$$

Given $f \in L^\infty(\partial \Omega, | D\1_{\overline{\Omega}} |_\mu)$ we consider the  functional $\mathcal{T}_f : L^2(\Omega, \mu) \rightarrow ]-\infty, + \infty]$ defined by
\begin{equation}
\mathcal{T}_f (u):= \left\{ \begin{array}{ll} \vert Du \vert_\mu (\Omega) + \displaystyle\int_{\partial \Omega} \vert u - f \vert \, d | D\1_{\overline{\Omega}} |_\mu\quad &\hbox{if} \ u \in BV(\Omega, d, \mu) \cap L^2(\Omega, \mu), \\ \\ + \infty \quad &\hbox{if} \ u \in  L^2(\Omega, \mu) \setminus BV(\Omega, d, \mu).\end{array}\right.
\end{equation}
We define the multivalued) operator  $-\Delta_{1,\mu}:= \partial \mathcal{T}_f$.

Let $f\in L^\infty(\partial \Omega, | D\1_{\overline{\Omega}} |_\mu)$ such that there exists $ \psi \in W^{1,1}(\partial_{m^{{\mu,\epsilon}}}\Omega)\cap L^\infty(\partial_{m^{{\mu,\epsilon}}}\Omega)$, with  $ \psi|_{\partial \Omega} = f$.
Are there metric measure spaces for which  if  $u_\epsilon$ is  a  solution of the  problem
\begin{equation}
	\left\{\begin{array} {ll} \displaystyle  -\Delta^{m^{{\mu,\epsilon}}}_1 u(x)=0,& x\in\Omega,
	\\[10pt] u(x)=\psi(x),& x\in \partial_{m^{{\mu,\epsilon}}}\Omega,
	\end{array}\right.
	\end{equation} up to a subsequence,
$$u_\epsilon\to u\quad\hbox{in }L^1(\Omega, \mu),$$ where $u$ is a solution to the problem
 \begin{equation}
	\left\{\begin{array} {ll} \displaystyle  -\Delta_{1\mu} u(x)=0,& x\in\Omega,
	\\[10pt] u(x)=f(x),& x\in \partial\Omega.
	\end{array}\right.
\end{equation}
In a forthcoming paper we will study this problem.}
\end{remark}

	\subsection{Nonlocal median value property}
	
	Let us introduce some notation for this subsection. Given a $\nu$-measurable function $u: \Omega \rightarrow \mathbb{R}$, we decompose the space $X$ as
	$$E_+^x = \{ y \in X: u_\psi(y) > u_\psi(x) \}, \qquad E_-^x = \{ y \in X: u_\psi(y) < u_\psi(x) \},$$
	$$E_0^x = \{ y \in X: u_\psi(y) = u_\psi(x) \}.$$
	As $m_x$ is a probability measure, for any $x \in \Omega$ we have
	$$ m_x(E_+^x) + m_x(E_-^x) + m_x(E_0^x) = 1,$$
	so the following two conditions are equivalent:
	\begin{equation}\label{eq:mediancondition1}
	- m_x(E_0^x) \leq m_x(E_-^x) - m_x(E_+^x) \leq m_x(E_0^x)
	\end{equation}
	and
	\begin{equation}\label{eq:mediancondition2}
	m_x(E_+^x \cup E_0^x) \geq \frac12 \quad \mbox{and} \quad m_x(E_-^x \cup E_0^x) \geq \frac12.
	\end{equation}
	\begin{definition}{\rm
			We say that a $\nu$-measurable function $u: \Omega \rightarrow \mathbb{R}$ satisfies the {\it $m$-median value property} if either of the conditions \eqref{eq:mediancondition1} or \eqref{eq:mediancondition2} is satisfied for all $x \in \Omega$.}
	\end{definition}
	
	In the next Theorem, we prove that solutions to \eqref{1700.intro} satisfy the nonlocal median value property. In the other direction, it is not necessarily the case; examples to the contrary exist even in Euclidean spaces, see for instance \cite[Example 3.2]{MazPeRoTo}. Nonetheless, a partial converse still holds.
	
	\begin{theorem}
		Suppose that $u \in BV_m(\Omega)$ is a solution to \eqref{1700.intro}. Then it satisfies the $m$-median value property. Moreover, if $u \in BV_m(\Omega)$ satisfies the $m$-median value property, then it satisfies Definition \ref{Defi.1.var} except for antisymmetry of the function $g$.
	\end{theorem}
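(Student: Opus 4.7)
The argument splits into the two implications, each largely algebraic once the right setup is in place. A preliminary observation used throughout is that for $\nu$-a.e.\ $x\in\Omega$ the measure $m_x$ is concentrated on $\Omega_m$: by reversibility,
$\int_\Omega m_x(X\setminus\Omega_m)\,d\nu(x) = \int_{X\setminus\Omega_m} m_y(\Omega)\,d\nu(y) = 0,$
since any $y\in X\setminus\Omega_m$ satisfies $m_y(\Omega)=0$ by the very definition of $\partial_m\Omega$. So for such $x$ I may treat $\Omega_m$ as the disjoint union $E_+^x\cup E_-^x\cup E_0^x$ modulo $m_x$-null sets, and this is what ties the decomposition in the definition of the median property to the one inherited from a solution.

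For the first implication, let $u$ solve \eqref{1700.intro} with associated vector field $g$ as in Definition \ref{Defi.1.var}. The sign inclusion \eqref{1-lapla2.var} forces $g(x,y)=1$ on $E_+^x$, $g(x,y)=-1$ on $E_-^x$, and $|g(x,y)|\le 1$ on $E_0^x$. Substituting into \eqref{1-lapla.var} yields
$0 = m_x(E_+^x) - m_x(E_-^x) + \int_{E_0^x} g(x,y)\,dm_x(y),$
and bounding the remaining integral in absolute value by $m_x(E_0^x)$ produces exactly \eqref{eq:mediancondition1}. For the converse, given $u$ satisfying the median property, my plan is to define $g(x,y)=\mathrm{sign}_0(u_\psi(y)-u_\psi(x))$ whenever $u_\psi(y)\neq u_\psi(x)$, and on $E_0^x$ to use the constant
$c(x):=\bigl(m_x(E_-^x)-m_x(E_+^x)\bigr)/m_x(E_0^x)$
when $m_x(E_0^x)>0$ (and $c(x):=0$ otherwise). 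Property \eqref{eq:mediancondition1} gives precisely $|c(x)|\le 1$, and when $m_x(E_0^x)=0$ the same property forces $m_x(E_+^x)=m_x(E_-^x)$. Either way $\int_{\Omega_m} g(x,y)\,dm_x(y)=0$ by direct computation, and the sign inclusion together with the bound $\|g\|_\infty\le 1$ hold by construction.

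The only technical point I expect to need care is measurability of $c(\cdot)$, which reduces to measurability of the maps $x\mapsto m_x(E_+^x)$, $m_x(E_-^x)$, $m_x(E_0^x)$; this follows from the standing measurability assumption on $x\mapsto m_x$ together with the Borel measurability of the level sets of $u_\psi$. Antisymmetry, on the other hand, genuinely fails for this construction: it would require $c(x)=-c(y)$ for $m_x$-a.e.\ $y\in E_0^x$, an additional constraint not implied by the median value property. This is precisely why the converse can only recover Definition \ref{Defi.1.var} up to antisymmetry, as the statement asserts.
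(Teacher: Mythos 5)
Your proposal is correct and follows essentially the same route as the paper: the forward direction substitutes the values of $g$ forced by the sign inclusion into $\int_{\Omega_m} g(x,y)\,dm_x(y)=0$ and bounds the $E_0^x$ term, and the converse uses exactly the same explicit candidate $g$ (equal to $\mathrm{sign}_0$ off the level set and to $(m_x(E_-^x)-m_x(E_+^x))/m_x(E_0^x)$ on it). Your added remarks on the concentration of $m_x$ on $\Omega_m$ and on measurability of $c(\cdot)$ are correct refinements that the paper leaves implicit, but they do not change the argument.
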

	
	\begin{proof}
		Suppose that $u \in BV_m(\Omega)$ is a solution to \eqref{1700.intro}. Take the antisymmetric function $g \in L^\infty(X \times X)$ associated to $u$ given by Definition \ref{Defi.1.var}; in particular, it satisfies
		$$ \int_X g(x,y) dm_x(y) = 0.$$
		We notice that for $y \in E_+^x$ we have $g(x,y) = 1$, for $y \in E_-^x$ we have $g(x,y) = -1$, so
		$$ 0 = \int_X g(x,y) dm_x(y) = \int_{E_+^x} g(x,y) dm_x(y) + \int_{E_-^x} g(x,y) dm_x(y) + $$
		$$ + \int_{E_0^x} g(x,y) dm_x(y) = m_x(E_+^x) - m_x(E_-^x) + \int_{E_0^x} g(x,y) dm_x(y).$$
		As $\| g \|_\infty \leq 1$, we reorder this equality and estimate
		$$ m_x(E_-^x) = m_x(E_+^x) + \int_{E_0^x} g(x,y) dm_x(y) \leq m_x(E_+^x) + m_x(E_0^x) $$
		and
		$$ m_x(E_+^x) = m_x(E_-^x) - \int_{E_0^x} g(x,y) dm_x(y) \leq m_x(E_-^x) + m_x(E_0^x).$$
		We put these two estimates together and obtain \eqref{eq:mediancondition1}, hence $u$ satisfies the nonlocal mean value property. \\
		\\
		In the other direction, it is sufficient to check that $g(x,y)$ defined as
		$$ g(x,y) = \threepartdef{1}{u_\psi(y) > u_\psi(x)}{0}{u_\psi(y) = u_\psi(x)}{-1}{u_\psi(y) < u_\psi(x)}$$
		if $m_x(E_0^x) = 0$ and as
		$$ g(x,y) = \threepartdef{1}{u_\psi(y) > u_\psi(x)}{\frac{m_x(E_-^x) - m_x(E_+^x)}{m_x(E_0^x)}}{u_\psi(y) = u_\psi(x)}{-1}{u_\psi(y) < u_\psi(x)}$$
		if $m_x(E_0^x) > 0$ satisfies Definition \ref{Defi.1.var} except for $g$ being antisymmetric.
	\end{proof}

\subsection{Nonlocal Poincar\'e inequality}
	
In most of the results in Section \ref{sec:lgpnonlocal} we have assumed that $(m,\nu)$ satisfies a $p$-Poincar\'{e} inequality. Let us see now examples of $(m,\nu)$ satisfying  a $p$-Poincar\'{e} inequality.
	
	
We assume the measure $\nu$ to be ergodic, otherwise the Poincar\'e inequality cannot hold - lack of ergodicity of $\nu$ means that in a certain measure-theoretic sense the space is not connected. We will consider examples of metric random walk spaces as introduced in Example \ref{JJ}. Firstly, in (1), the random walk $(m^J, \mathcal{L}^N)$, where $J$ continuous radially symmetric fuction with compact support, satisfies a $p$-Poincar\'{e} inequality (see  \cite{AMRT1} or \cite{ElLibro}). Secondly, in (3), the random walk defined on a finite graph satisfies a $p$-Poincar\'e inequality, as we will see in Corollary \ref{cor:graph}. Thirdly, in (4), the space $(X,d,m^{\nu,\varepsilon})$ under some sensible topological assumptions on the space $X$, satisfies a $p$-Poincar\'e inequality, as we will see in Proposition \ref{prop:poincaredoubling}. Finally, we will see examples of infinite graphs where the Poincar\'e inequality does not hold.
	
\begin{proposition}\label{prop:poincaredoubling}
Suppose that $(X,d, \nu)$ is a length space and that $\Omega$ has finite diameter and the measure $\nu$ is doubling. Then $(m^{\nu, \varepsilon}, \nu)$ satisfies a $p$-Poincar\'e inequality in $\Omega$ for any $p \geq 1$.
\end{proposition}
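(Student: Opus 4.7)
The plan is a standard chaining argument exploiting both the length-space structure (to connect points by short hops) and the doubling property (to turn pointwise chain estimates into integral ones). Set $D := \operatorname{diam}(\Omega_m)+\varepsilon$ and $N := \lceil 4D/\varepsilon\rceil$, both finite under the hypotheses.

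First, given $x\in\Omega$ and $y\in\partial_m\Omega$, the length-space property produces a curve from $x$ to $y$ of length at most $d(x,y)+\varepsilon/4$; subdividing into $N$ equal pieces gives a chain $x=x_0,x_1,\ldots,x_N=y$ with $d(x_i,x_{i+1})\le\varepsilon/4$. Jensen's inequality yields
\begin{equation}
|u_\psi(x)-u_\psi(y)|^p \le N^{p-1}\sum_{i=0}^{N-1}|u_\psi(x_i)-u_\psi(x_{i+1})|^p.
\end{equation}
To upgrade this pointwise estimate into an integral one, I would average each intermediate $x_i$ over the ball $B(x_i,\varepsilon/4)$. By Proposition~\ref{prop:doubling} applied to a ball of fixed radius $\sim D$ containing $\Omega_m$, one has $\nu(B(x_i,\varepsilon/4))\ge c_0>0$ uniformly in $x_i$, so the averaging introduces only a bounded multiplicative constant. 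Since $d(z_i,z_{i+1})\le\varepsilon$ whenever $z_j\in B(x_j,\varepsilon/4)$, the containment $B(x_{i+1},\varepsilon/4)\subset B(z_i,\varepsilon)$ makes each averaged term comparable, with constants depending only on $D,\varepsilon,C_d,p$, to
\begin{equation}
I := \int_{\Omega_m}\int_{\Omega_m}|u_\psi(w)-u_\psi(z)|^p\, dm^{\nu,\varepsilon}_z(w)\,d\nu(z).
\end{equation}
This produces a pointwise bound $|u_\psi(x)-u_\psi(y)|^p\le C\cdot I$ valid for all $x\in\Omega$ and $y\in\partial_m\Omega$.

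Next, integrating $y$ over $\partial_m\Omega$ (whose $\nu$-measure is strictly positive by the standing ergodicity assumption and Theorem~\ref{ergconectC}, since $0<\nu(\Omega)<\nu(X)$) and then $x$ over $\Omega$, the elementary bound $|u(x)|^p\le 2^{p-1}|\psi(y)|^p+2^{p-1}|u_\psi(x)-u_\psi(y)|^p$ gives
\begin{equation}
\int_\Omega|u|^p\,d\nu \;\le\; C_1\int_{\partial_m\Omega}|\psi|^p\,d\nu \;+\; C_2\,I.
\end{equation}
It remains to absorb $I$ into the right-hand side of the $p$-Poincar\'e inequality. Splitting the outer variable according to whether $z\in\Omega$ or $z\in\partial_m\Omega$, the first piece is directly dominated by $\int_\Omega\int_{\Omega_m}|u_\psi(y)-u(x)|^p\,dm_x(y)\,d\nu(x)$; for the second, the reversibility identity $dm_z(w)\,d\nu(z)=dm_w(z)\,d\nu(w)$ swaps the roles of $z$ and $w$, and a further split by whether $w\in\Omega$ or $w\in\partial_m\Omega$ reduces matters to the same term plus a purely boundary-boundary contribution which, using $m_x(\partial_m\Omega)\le 1$, is bounded by $C_3\|\psi\|_{L^p(\partial_m\Omega)}^p$.

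The main technical hurdle is the averaging step. A priori, the chain from $x$ to $y$ may exit $\Omega_m$, along which $u_\psi$ is not canonically defined. This is circumvented by extending $u_\psi$ by $0$ outside $\Omega_m$ and noting that the doubling property still controls the portion of each averaging ball $B(x_i,\varepsilon/4)$ lying in $\Omega_m$, since every $x_i$ is within $\varepsilon$ of $\Omega_m$. A secondary concern is ensuring measurability and uniformity of the chain construction; this is handled by working with a countable covering of $\Omega_m$ by balls of radius $\varepsilon/4$ (whose number is finite thanks to doubling and bounded diameter) and chaining along this discrete skeleton, so that all constants depend only on $D,\varepsilon,C_d,p$. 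Of course the resulting $\lambda=\lambda(D,\varepsilon,C_d,p)$ degenerates as $\varepsilon\to 0$, consistent with the local/nonlocal correspondence of Theorem~\ref{thm:connectionwithlocal}.
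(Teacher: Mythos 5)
Your overall architecture (chain over $\sim\operatorname{diam}(\Omega)/\varepsilon$ steps, use doubling to bound the measure of $\varepsilon$-scale balls from below uniformly, then integrate and absorb the boundary terms via reversibility and invariance) is in the right spirit, and your treatment of the term $I$ and of the first/last links of the chain is fine. But there is a genuine gap at the step you yourself flag as the "main technical hurdle," and the fix you propose does not work. The near-geodesic joining $x\in\Omega$ to $y\in\partial_m\Omega$ is a curve in the ambient length space $X$, and it can leave $\Omega_m$ by an arbitrarily large amount: take $X=\mathbb{R}^2$ with Lebesgue measure and $\Omega$ a thin ``C''-shaped region; for $x$ near one tip and $y$ near the other, the Euclidean segment cuts across the opening, through points whose distance to $\Omega_m$ is comparable to $\operatorname{diam}(\Omega)$, not to $\varepsilon$. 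So your assertion that ``every $x_i$ is within $\varepsilon$ of $\Omega_m$'' is unjustified. Worse, extending $u_\psi$ by $0$ outside $\Omega_m$ destroys the estimate: a link of the chain with $x_i\notin\Omega_m$ and $x_{i+1}\in\Omega_m$ contributes $|u_\psi(x_{i+1})-0|^p$ (and, after averaging, $\dashint_{B(x_{i+1},\varepsilon/4)}|u_\psi|^p\,d\nu$), which is exactly the quantity the Poincar\'e inequality is supposed to bound and is in no way dominated by $I$, since $I$ only sees pairs $(z,w)\in\Omega_m\times\Omega_m$. The argument thus degenerates into bounding the left-hand side by itself. Since the proposition is true for such ``C''-shaped domains, the proof as written cannot be repaired merely by adjusting constants; the chain must be forced to stay inside $\Omega_m$.

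The paper's proof avoids this precisely by never constructing a global curve. It builds layers $B_0=\partial_m\Omega$, $B_j=\{x\in\Omega\setminus\bigcup_{k<j}B_k:\ d(x,B_{j-1})\le\varepsilon/2\}$, which are by construction subsets of $\Omega_m$ and exhaust $\Omega$ in finitely many steps by the finite-diameter assumption; the length-space and doubling hypotheses are used only locally, to find for each $x\in B_j$ a ball $B(y,\varepsilon/5)\subset B(x,\varepsilon)\cap B_{j-1}$ and hence a uniform lower bound $m_x(B_{j-1})\ge C$. The iteration of the resulting integral inequality from $B_0$ inward replaces your pointwise chaining. If you want to keep your pointwise approach, you would need to chain along the intrinsic layered structure of $\Omega$ relative to $\partial_m\Omega$ (i.e., through points of $\Omega_m$ only), at which point you are essentially reconstructing the paper's argument.
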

	
\begin{proof} Let $m_x = m_x^{\nu, \varepsilon}$, $\psi \in L^\infty(X, \nu)$  and we consider:
$$B_0:= \partial_m \Omega, \quad B_1:= \left\{ x \in \Omega \ : d(x, B_0) \leq \frac{\varepsilon}{2} \right\}, \quad B_2:= \left\{ x \in \Omega \setminus B_1 \ : d(x, B_1) \leq \frac{\varepsilon}{2} \right\},$$
$$B_j:= \left\{ x \in \Omega \setminus \bigcup_{k=1}^{j-1} B_k \ : d(x, B_{j-1}) \leq \frac{\varepsilon}{2} \right\}, \quad j=1,2, \ldots .$$
As $\Omega$ has finite diameter, we have
\begin{equation*}
\label{theab01}\exists\,  l \in \mathbb{N} \,:\ \nu\left(\Omega \setminus \bigcup_{j=1}^{l}B_j  \right) = 0.
\end{equation*}
We have
$$\int_{\Omega}	\int_{\Omega_m}  |u_{\psi}(y)-u(x)|^q \, dm_x(y) \, d\nu(x) \geq \int_{B_j}	\int_{B_{j-1}}  |u_{\psi}(y)-u(x)|^q \, dm_x(y) \, d\nu(x), \ \ j=1, \ldots, l.$$
Now, by the reversibility of $\nu$, we have
$$\int_{B_j}
		\int_{B_{j-1}}  |u_{\psi}(y)-u(x)|^q \, dm_x(y) \,
		d\nu(x) \geq $$
		$$\geq \frac{1}{2^q}\int_{B_j}
		\int_{B_{j-1}}  |u(x)|^q \, dm_x(y) \,
		d\nu(x) - \int_{B_j}
		\int_{B_{j-1}}  |u_{\psi}(y)|^q \, dm_x(y) \,
		d\nu(x) = $$ $$= \frac{1}{2^q}\int_{B_j} \left(
		\int_{B_{j-1}}  dm_x(y) \right)\,  |u(x)|^q \,
		d\nu(x) - \int_{B_{j-1}}
		\bigg(\int_{B_{j}} dm_y(x) \bigg) |u_{\psi}(y)|^q \,  \,
		d\nu(y) \geq$$
		$$\geq \frac{1}{2^q}\int_{B_j}
		m_x(B_{j-1})\,  |u(x)|^q \,
		d\nu(x) -
		\int_{B_{j-1}}  |u_{\psi}(y)|^q \,
		d\nu(y).$$
		Now, let us see that $m_x(B_{j-1})$ is bounded from below for $x \in B_j$. To this end, fix any $x \in B_j$ and take the ball $B(x, \varepsilon)$. As $X$ is a length space, there exists a point $y \in B(x, \varepsilon)$ such that $B(y, \frac{\varepsilon}{5}) \subset B(x, \varepsilon) \cap B_{j-1}$. Then, as $\nu$ is doubling and $y \in B(x, \varepsilon)$,  by Proposition \ref{prop:doubling} we have
		$$ m_x(B_{j-1}) \geq m_x \left(B\left(y, \frac{\varepsilon}{5}\right)\right) = \frac{\nu(B(y,\frac{\varepsilon}{5}))}{\nu(B(x, \varepsilon))} \geq C.$$
		We note that this constant does not depend on $\varepsilon$. Then
		$$\int_{\Omega}
		\int_{\Omega_m}  |u_{\psi}(y)-u(x)|^q \, dm_x(y) \,
		d\nu(x) \geq \frac{C}{2^q}\int_{B_j}
		|u(x)|^q \,
		d\nu(x) -
		\int_{B_{j-1}}  |u_{\psi}(y)|^q \,
		d\nu(y).$$
		We rewrite the above inequality as
		$$ \frac{C}{2^q}\int_{B_j} |u(x)|^q \, d\nu(x) \leq \int_{\Omega} \int_{\Omega_m}  |u_{\psi}(y)-u(x)|^q \, dm_x(y) \, 	d\nu(x) + \int_{B_{j-1}}  |u_{\psi}(y)|^q \,
		d\nu(y).$$
		Therefore, since $u_\psi(y) = \psi(y)$ if $y \in B_0$ and $\Omega = \bigcup_{j=1}^l B_j$, we iterate  the last inequality and get that there exists $\lambda >0$ such that
		$$	\lambda\int_\Omega \left|u(x) \right|^q \, d\nu(x)\le \int_{\Omega} \int_{\Omega_m}  |u_{\psi}(y)-u(x)|^q \, dm_x(y) \,	d\nu(x)+\int_{\partial_m\Omega}|\psi (y)|^q \, d\nu(y).$$
		
	\end{proof}
	
\begin{remark}{\rm
In fact, instead of finite diameter of $\Omega$ we may assume finite width, which in the setting of metric measure spaces is exactly the assumption that
$$\Omega \subset \{ x \in X: d(x, \partial_m \Omega) \leq M \}$$
for some $M < \infty$. In this case we also have $\nu(\Omega \backslash \bigcup_{j=1}^l B_j) = 0$ for some $l \in \mathbb{N}$.}
\end{remark}
	
\begin{remark}{\rm
The assumptions on $X$ and $\nu$ include the most typical cases considered in the literature, such as weighted Euclidean spaces, smooth manifolds and Carnot-Carath\'eodory spaces.}
\end{remark}

In fact, we may prove the Poincar\'e inequality in many more settings than only for the random walk $m_x^{\nu,\varepsilon}$. In order for the Poincar\'e inequality to hold, the measure $m_x$ has to be "uniformly distributed around $x$". In the next few results, we present alternative sufficient conditions; the proofs are very similar and the key point is the estimation of $m_x(B_{j-1})$ from below in a uniform way. In particular, we stress that the measure $m_x$ need not be radially distributed nor concentrated around $x$.
	
\begin{proposition} Let $[X,d,m]$ be a metric random walk space with an invariant and reversible measure $\nu$. With assumptions on $X, \Omega$ and $\nu$ as in Proposition \ref{prop:poincaredoubling}, suppose that $m_x$ is a uniformly distributed on an annulus $B(x,\varepsilon) \backslash B(x,\delta)$, i.e.
$$m_x = m_x^{\nu, \varepsilon, \delta} := \frac{\nu \res (B(x,\varepsilon) \backslash \overline{B(x,\delta)})}{\nu(B(x,\varepsilon) \backslash \overline{B(x,\delta)})}.$$
Assume that $\delta = \frac{a}{b} \varepsilon$ with $a,b \in \mathbb{N}$. Then $(m, \nu)$ satisfies a  $p$-Poincar\'e inequality in $\Omega$ for any $p \geq 1$.
\end{proposition}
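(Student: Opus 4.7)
The plan is to adapt the layering argument of Proposition \ref{prop:poincaredoubling} to the annular support of $m_x$. The essential new difficulty is that $m_x$ assigns no mass to $B(x,\delta)$, so from a point $x$ we can only step to points at distance between $\delta$ and $\varepsilon$. The rational condition $\delta = \frac{a}{b}\varepsilon$ makes it natural to build layers at the finer scale $\eta := \varepsilon/(2b)$, so that both $\delta = 2a\eta$ and $\varepsilon = 2b\eta$ are integer multiples of $\eta$.

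First, I would define the nested exhaustion
\begin{equation*}
B_0 := \partial_m \Omega, \qquad B_j := \left\{ x \in \Omega \setminus \bigcup_{k=1}^{j-1} B_k \,:\, d(x, B_{j-1}) \leq \eta \right\}, \quad j \geq 1,
\end{equation*}
and invoke the finite-diameter assumption on $\Omega$ to produce $l \in \mathbb{N}$ with $\nu\bigl(\Omega \setminus \bigcup_{j=0}^{l} B_j\bigr) = 0$.

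Second, and this is the crux of the argument, I would establish the uniform bound: there exists $C > 0$, depending only on the doubling constant of $\nu$ and on $a,b$, such that for every $j \geq a+b+1$ and every $x \in B_j$,
\begin{equation*}
m_x\!\left( \bigcup_{k \leq j-1} B_k \right) \geq C.
\end{equation*}
Fix such an $x$. Using the length-space structure of $X$, one finds $y$ along a near-geodesic from $x$ toward $B_0$ with $d(x,y) = \tfrac{a+b}{2b}\varepsilon$, so that $y$ belongs to a layer of index close to $j-(a+b)$. Setting $r := \tfrac{b-a}{4b}\varepsilon$, the ball $B(y,r)$ sits inside the annulus $B(x,\varepsilon) \setminus \overline{B(x,\delta)}$ (by the triangle inequality applied to the midpoint-and-half-width decomposition of the annulus) and is entirely contained in a few consecutive earlier layers. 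Since $y \in B(x,\varepsilon)$, Proposition \ref{prop:doubling} gives $\nu(B(y,r)) \geq C\,\nu(B(x,\varepsilon)) \geq C\,\nu(B(x,\varepsilon)\setminus \overline{B(x,\delta)})$ for a constant independent of $\varepsilon$ and $x$, and therefore $m_x(B(y,r)) \geq C$.

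Third, with this uniform lower bound in hand, the iteration from Proposition \ref{prop:poincaredoubling} carries over with only bookkeeping changes: using reversibility of $\nu$ and the elementary inequality $|r-s|^q \geq \tfrac{1}{2^q}|r|^q - |s|^q$, one bounds $\int_{B_j}|u|^q\,d\nu$ by the full nonlocal energy plus an integral of $|u_\psi|^q$ over earlier layers, and iterates at most $l$ times, absorbing the $\int_{\partial_m \Omega}|\psi|^q\,d\nu$ term at the base. The small number of initial layers $B_1,\dots,B_{a+b}$ that do not satisfy the hypothesis $j \geq a+b+1$ are handled the same way after observing that points in these layers can directly reach $B_0$ through the annulus (since $d(x,B_0) < \varepsilon$), with the same doubling estimate yielding the required lower bound on $m_x(B_0)$.

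The main obstacle is the uniform lower bound in step two. The annular support of $m_x$ rules out a direct jump to $B_{j-1}$; instead one must skip past the forbidden inner ball $B(x,\delta)$ and target a layer roughly $a+b$ indices earlier. For this target to be well-defined and the ball $B(y,r)$ to fit cleanly inside the annulus, the layer width must be compatible with both $\delta$ and $\varepsilon$, which is precisely the role of the rational condition $\delta = (a/b)\varepsilon$. Once the uniform lower bound is established, the remainder of the proof is essentially a transcription of the iteration in Proposition \ref{prop:poincaredoubling}.
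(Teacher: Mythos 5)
Your proposal follows essentially the same strategy as the paper's proof: layer $\Omega$ at the finer scale $\varepsilon/(2b)$ (the paper uses $\varepsilon/(4b)$), use the length-space structure and the doubling estimate of Proposition \ref{prop:doubling} to locate a ball of definite $m_x$-mass inside the annulus lying in layers roughly $a+b$ (resp.\ $4a+2$) indices closer to the boundary, and then iterate as in Proposition \ref{prop:poincaredoubling}. The only cosmetic difference is that the paper continues the layer decomposition into $\partial_m\Omega$ (defining $B_0, B_{-1},\dots$) so that a single iteration formula covers every $j$, whereas you treat the first $a+b$ layers as a separate base case; both variants are equally valid.
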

	
\begin{proof}
Similarly to the proof of Proposition \ref{prop:poincaredoubling}, we set
$$B_1:= \bigg\{ x \in \Omega \ : d(x, \partial_m\Omega) \leq \frac{\varepsilon}{4b} \bigg\}, \quad B_2:= \bigg\{ x \in \Omega \setminus B_1 \ : d(x, B_1) \leq \frac{\varepsilon}{4b} \bigg\},$$
$$B_j:= \left\{ x \in \Omega \setminus \bigcup_{k=1}^{j-1} B_k \ : d(x, B_{j-1}) \leq \frac{\varepsilon}{4b} \right\}, \quad j=2,3, \ldots .$$
As $\Omega$ has finite diameter (or width), we have
\begin{equation*}
\exists\,  l \in \mathbb{N} \,:\ \nu\left(\Omega \setminus \bigcup_{j=1}^{l}B_j \right) = 0.
\end{equation*}
Furthermore, we extend the construction of $B_j$ back into $\partial_m \Omega$ (note that in particular $B_0$ is defined differently to the set in the proof of Proposition \ref{prop:poincaredoubling}). We set
$$ B_{0} = \left\{ x \in \partial_m\Omega: d(x, \Omega) \leq \frac{\varepsilon}{4b} \right\}, \quad B_{-1}:= \left\{ x \in \partial_m\Omega \setminus B_{0} \ : d(x, B_{0}) \leq \frac{\varepsilon}{4b} \right\},$$
$$B_{-j}:= \left\{ x \in \partial_m \Omega \setminus \bigcup_{k=0}^{j-1} B_{-k} \ : d(x, B_{-j+1}) \leq \frac{\varepsilon}{4b} \right\}, \quad j=1,2, \ldots .$$
Now, let $j \geq 1$ and suppose that $x \in B_j$. As for $k \in \mathbb{N}$ we have
$$ \frac{(k-1)\varepsilon}{4b} \leq d(x, B_{j-k}) \leq \frac{k\varepsilon}{4b},$$
we set $j' = j - 4a - 2$ and see that there exists $y \in B(x,\varepsilon)$ such that
$$ B\left(y, \frac{\varepsilon}{16b}\right) \subset (B(x,\varepsilon) \backslash \overline{B(x,\delta)}) \cap B_{j'}.$$
Hence,  by Proposition \ref{prop:doubling} we have that
$$ m_x(B_{j'}) \geq m_x \left(B(y,\frac{\varepsilon}{16b})\right) = \frac{\nu(B(y,\frac{\varepsilon}{16b}))}{\nu(B(x,\varepsilon) \backslash \overline B(x,\delta))} \geq \frac{\nu(B(y,\frac{\varepsilon}{16b}))}{\nu(B(x,\varepsilon))} \geq C = C(b).$$
Hence, we may make a similar calculation as in the proof of Proposition \ref{prop:poincaredoubling} to obtain that for $j=1, \ldots, l$ we have
$$\int_{\Omega} \int_{\Omega_m}  |u_{\psi}(y)-u(x)|^q \, dm_x(y) \,
d\nu(x) \geq \int_{B_j}	\int_{B_{j'}}  |u_{\psi}(y)-u(x)|^q \, dm_x(y) \, d\nu(x) \geq $$
$$\geq \frac{1}{2^q}\int_{B_j}
\int_{B_{j'}}  |u(x)|^q \, dm_x(y) \, d\nu(x) - \int_{B_j} \int_{B_{j'}}  |u_{\psi}(y)|^q \, dm_x(y) \, d\nu(x) = $$
$$= \frac{1}{2^q}\int_{B_j} \left(\int_{B_{j'}}  dm_x(y) \right)\,  |u(x)|^q \,
d\nu(x) - \int_{B_{j'}} \bigg(\int_{B_{j}} dm_y(x) \bigg) |u_{\psi}(y)|^q \,  \, d\nu(y) \geq$$
$$\geq \frac{1}{2^q}\int_{B_j} m_x(B_{j'})\,  |u(x)|^q \, d\nu(x) - \int_{B_{j'}}  |u_{\psi}(y)|^q \, d\nu(y).$$
Since $m_x(B_{j'}) \geq C$, we rewrite this inequality as
$$ \frac{C}{2^q}\int_{B_j} \,  |u(x)|^q \, d\nu(x) \leq \int_{\Omega} \int_{\Omega_m}  |u_{\psi}(y)-u(x)|^q \, dm_x(y) \, d\nu(x) + \int_{B_{j'}}  |u_{\psi}(y)|^q \, d\nu(y).$$
We iterate this result and obtain the Poincar\'e inequality.
\end{proof}
	
\begin{corollary} Let $[X,d,m]$ be a metric random walk space with an invariant and reversible measure $\nu$. With assumptions on $X$ and $\nu$ as in Proposition \ref{prop:poincaredoubling}, suppose that for some $\alpha > 0$ we have $m_x \geq \alpha \, m_x^{\nu, \varepsilon}$ or $m_x \geq \alpha \, m_x^{\nu,\varepsilon,\delta}$. Then the $p$-Poincar\'e inequality holds for any $p \geq 1$.
\end{corollary}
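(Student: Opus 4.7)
The plan is to observe that this corollary follows by a direct transcription of the proofs of Proposition \ref{prop:poincaredoubling} and its annulus analogue, with a single extra factor of $\alpha$ carried through the argument. In both earlier proofs, essentially all of the dependence on the fine structure of $m_x$ is concentrated in one step: after partitioning $\Omega_m$ into successive layers $B_0, B_1, \ldots, B_l$ of width proportional to $\varepsilon$, a uniform lower bound of the form $m_x^{\nu,\varepsilon}(B_{j-1}) \geq C$ (respectively $m_x^{\nu,\varepsilon,\delta}(B_{j'}) \geq C(b)$) for $x \in B_j$ was established using the doubling property of $\nu$ together with the explicit formula for $m_x^{\nu,\varepsilon}$ or $m_x^{\nu,\varepsilon,\delta}$ via Proposition \ref{prop:doubling}.

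First, I would construct the same partition $\{B_j\}$ as in the corresponding earlier proposition, according to whether the hypothesis is $m_x \geq \alpha\, m_x^{\nu,\varepsilon}$ or $m_x \geq \alpha\, m_x^{\nu,\varepsilon,\delta}$. Next, I would note that the inequality of measures immediately gives $m_x(A) \geq \alpha\, m_x^{\nu,\varepsilon}(A)$ (respectively the annulus version) for every Borel set $A \subset X$. Combining this with the uniform estimate from the earlier proof yields
$$ m_x(B_{j-1}) \geq \alpha C \qquad \text{for all } x \in B_j, $$
with $\alpha$ and $C$ independent of $j$, and similarly for the annulus case with $B_{j'}$ and $C(b)$.

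The remainder of the argument is then verbatim from the previous two propositions: the reversibility swap
$$ \int_{B_j}\int_{B_{j-1}} \cdots \, dm_x(y)\,d\nu(x) = \int_{B_{j-1}}\int_{B_j} \cdots \, dm_y(x)\,d\nu(y), $$
the elementary inequality $|r-s|^q \geq 2^{-q}|r|^q - |s|^q$ applied pointwise to $|u_\psi(y)-u(x)|^q$, and the iteration from $B_l$ down to $B_0 \subset \partial_m\Omega$ depend only on the reversibility of $\nu$ with respect to $m$ (which is part of the hypothesis) and on the uniform lower bound just obtained. Consequently the same chain of inequalities closes and produces a $p$-Poincar\'{e} inequality in $\Omega$, merely with constant $\lambda$ rescaled by a factor of $\alpha$.

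The only point that requires any care is to check that the reversibility of $\nu$ with respect to $m$ is not disturbed by the domination hypothesis, but this is immediate: reversibility of $\nu$ with respect to $m$ is built into the assumption that $[X,d,m]$ is a metric random walk space with invariant reversible measure $\nu$, and nowhere in the argument is reversibility of the dominating measure $m^{\nu,\varepsilon}$ (or $m^{\nu,\varepsilon,\delta}$) used per se. Thus I expect no genuine obstacle; the corollary is essentially a bookkeeping exercise propagating the constant $\alpha$ through the computations of the preceding two propositions.
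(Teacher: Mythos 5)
Your proposal is correct and matches the paper's proof essentially verbatim: the paper likewise observes that the only change needed is in the lower bound for $m_x(B_{j-1})$, where the domination hypothesis gives $m_x(B_{j-1}) \geq \alpha\, m_x^{\nu,\varepsilon}(B(y,\tfrac{\varepsilon}{5})) \geq \alpha C$, after which the rest of the argument of Proposition \ref{prop:poincaredoubling} (and its annulus analogue) goes through unchanged. Your additional remark that only the reversibility of $\nu$ with respect to $m$ itself is used is a correct and worthwhile clarification.
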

	
\begin{proof}
The only thing that we have to change in the proof of Proposition \ref{prop:poincaredoubling} is the estimate of $m_x(B_{j-1})$ from below. Choose $y \in B(x, \varepsilon)$ as above, so that $B(y, \frac{\varepsilon}{5}) \subset B(x,\varepsilon) \cap B_{j-1}$. Then, we have
$$ m_x(B_{j-1}) \geq m_x\left(B(y, \frac{\varepsilon}{5})\right) \geq \alpha \, m_x^{\nu,\varepsilon}\left(B(y,\frac{\varepsilon}{5})\right) = \alpha \frac{\nu(B(y,\frac{\varepsilon}{5}))}{\nu(B(x, \varepsilon))} \geq \alpha C.$$
We proceed similarly for $m_x \geq \alpha \, m_x^{\nu, \varepsilon,\delta}$.
\end{proof}
In particular, the Corollary above covers the case of random walks $m_x^J$ given by radial weights $J$ defined in Example \ref{exammplee} and the case of measures $m_x^{\nu,\varepsilon,\delta}$ defined on annuli such that $\frac{\varepsilon}{\delta} \notin \mathbb{Q}$.
	
The next Corollary concerns the case of locally finite graphs.
	
\begin{corollary}\label{cor:graph}
Suppose that $G=(V(G),E(G))$ is a locally finite graph and let $(m^G_x, \nu_G)$ be defined as in Example \ref{JJ} (3). Let $\Omega$ be a finite subgraph of $V(G)$. Then $(m^G, \nu_G)$ satisfies a $p$-Poincar\'e inequality in $\Omega$   for any $p \geq 1$.
\end{corollary}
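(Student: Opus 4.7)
The strategy is to mimic the proof of Proposition~\ref{prop:poincaredoubling}, with the combinatorics of the graph $G$ replacing the length-space doubling geometry. Set $B_0 := \partial_{m^G}\Omega$ and, for $j \geq 1$, define inductively
$$B_j := \Big\{x \in \Omega \setminus \bigcup_{k=1}^{j-1} B_k \,:\, \exists\, y \in B_{j-1} \text{ with } y \sim x\Big\}.$$
Since $G$ is connected (as assumed in Example~\ref{JJ}(3)) and $\Omega \subsetneq V(G)$, every $x \in \Omega$ admits a shortest graph path ending in $V(G) \setminus \Omega$; the penultimate vertex of such a path belongs to $\Omega$, so the final vertex lies in $\partial_{m^G}\Omega$. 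Consequently every vertex of $\Omega$ is reachable from $\partial_{m^G}\Omega$ through a finite path inside $\Omega_m$, and because $\Omega$ is finite there exists $l \leq |\Omega|$ with $\Omega = \bigcup_{j=1}^l B_j$ (a disjoint union by construction).

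The crucial step is a uniform lower bound on $m^G_x(B_{j-1})$ for $x \in B_j$ that plays the role of the doubling constant from Proposition~\ref{prop:poincaredoubling}. For any such $x$ there exists $y \in B_{j-1}$ with $y \sim x$, hence
$$m^G_x(B_{j-1}) \geq m^G_x(\{y\}) = \frac{w_{xy}}{d_x}.$$
Finiteness of $\Omega$ combined with local finiteness of $G$ ensures that only finitely many pairs $(x,y)$ with $x \in \Omega$ and $y \sim x$ occur, and every weight $w_{xy}$ is strictly positive; therefore
$$\alpha := \min\Big\{\tfrac{w_{xy}}{d_x} \,:\, x \in \Omega,\ y \sim x\Big\} > 0,$$
and this lower bound is independent of $j$.

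With these two ingredients the chain of estimates from Proposition~\ref{prop:poincaredoubling} carries over almost verbatim. Restricting the double integral to $B_j \times B_{j-1}$, applying the elementary inequality $|a-b|^q \geq 2^{-q}|a|^q - |b|^q$, and invoking reversibility of $\nu_G$, one obtains
$$\frac{\alpha}{2^q}\int_{B_j}|u|^q\,d\nu_G \leq \int_\Omega\int_{\Omega_m}|u_\psi(y)-u(x)|^q\,dm^G_x(y)\,d\nu_G(x) + \int_{B_{j-1}}|u_\psi|^q\,d\nu_G$$
for each $j = 1,\dots,l$. Iterating upward from $j=1$, where $B_0 = \partial_{m^G}\Omega$ and $u_\psi = \psi$, the intermediate integrals $\int_{B_{j-1}}|u|^q\,d\nu_G$ are bounded recursively, and summing in $j$ produces a constant $C = C(\alpha,q,l)$ such that $\int_\Omega|u|^q\,d\nu_G \leq C\big(\int_\Omega\int_{\Omega_m}|u_\psi-u|^q\,dm^G_x\,d\nu_G + \int_{\partial_{m^G}\Omega}|\psi|^q\,d\nu_G\big)$, which is the desired Poincar\'e inequality with $\lambda = 1/C$. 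The only genuinely delicate point is the positivity of $\alpha$: it rests crucially on joint finiteness of $\Omega$ and local finiteness of $G$, which is precisely why the Poincar\'e inequality can (and, as the authors note, does) fail on infinite graphs.
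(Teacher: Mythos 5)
Your proposal is correct and follows essentially the same route as the paper: the paper also decomposes $\Omega$ into layers $B_j$ by graph distance to $\partial_{m^G}\Omega$, observes that each $x\in B_j$ has a neighbour in $B_{j-1}$ so that $\alpha_j=\inf_{x\in B_j}m^G_x(B_{j-1})>0$ by finiteness of $\Omega$ and local finiteness of $G$, and then runs the iteration from Proposition~\ref{prop:poincaredoubling} verbatim. Your inductive (BFS-style) construction of the layers and the explicit bound $m^G_x(B_{j-1})\geq w_{xy}/d_x$ are just slightly more detailed versions of the same argument.
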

	
\begin{proof}
We set $B_j = \{ x \in \Omega: \text{dist}(x, \partial_m \Omega) = j \}$, where $\text{dist}$ denotes the graph distance. Then, as $\Omega$ is finite, a finite number of $B_j$ covers the whole of $\Omega$ and $\alpha_j = \inf_{x \in B_j} m_x(B_{j-1})$ is strictly positive: as any vertex from $B_j$ has an edge leading to a point in $B_{j-1}$; hence $m_x(B_{j-1})$ is strictly positive for any $x \in B_j$. As the minimisation is over a finite set, $\alpha_j > 0$. Now, we proceed exactly as in the proof of Proposition \ref{prop:poincaredoubling}.
\end{proof}
	
Unfortunately, ergodicity is too weak an assumption to obtain the Poincar\'e inequality. Our main issue with the proof is whether we can define the sets $B_j$ in such a way that $\Omega$ is a finite union of $B_j$'s and we have a uniform bound on $m_x(B_{j-1})$ for $x \in B_j$. We present two examples to highlight this: in the first one, we define a simple metric random walk of type (2) in Example \ref{JJ}. In a second, more involved example, we show that for the case of metric random walk on graphs (as in type (3) in Example \ref{JJ}), if we drop finiteness of the subgraph $\Omega$, the $p$-Poincar\'e inequality may no longer hold, even if we have $\Omega = \{ x \in \Omega: \text{dist}(x, \partial_m \Omega) = 1 \}$.

\begin{example}\label{example:markovchain}{\rm
Let $X = \mathbb{N} \cup \{ 0 \}$. Set $\nu(\{n\}) = 2^{-n}$; in particular, $\nu(X)$ is finite. We define the following random walk $m_x$ on $X$: \\
$\bullet$ $m_0(\{n\}) = 4^{-n}$ for $n \geq 1$; $m_0(\{ 0 \}) = \frac{2}{3}$. \\
$\bullet$ for $n \geq 1$ we have $m_n(\{ 0 \}) = 2^{-n}$; $m_n(\{n \}) = 1 - 2^{-n}$; $m_n(\{k\}) = 0$ for $k \neq n$ with $k \geq 1$.
	
We check that $\nu$ is invariant and reversible with respect to the random walk $m_x$: as our measures are atomic, it is sufficient to check the conditions for singletons. Firstly, we check that
$$ \int_X m_x(\{0\}) d\nu(x) = \frac{2}{3} \cdot 1 + \sum_{n=1}^\infty 2^{-n} \cdot 2^{-n} = 1 = \nu(\{0\}),$$
$$ \int_X m_x(\{n\}) d\nu(x) = (1 - 2^{-n}) 2^{-n} + 4^{-n} \cdot 1 = 2^{-n} = \nu(\{n\}),$$
hence $\nu$ is invariant with respect to $m_x$. Moreover, $\nu$ is reversible:
$$ \int_{\{n\}} \int_{\{n\}} dm_x(y) d\nu(x) = \int_{\{n\}} \int_{\{n\}} dm_y(x) d\nu(y);$$
$$ \int_{\{k\}} \int_{\{n\}} dm_x(y) d\nu(x) = 0 \text{ if } k, n \geq 1, k \neq n;$$
$$ \int_{\{n\}} \int_{\{0\}} dm_x(y) d\nu(x) = 2^{-n} \nu(n) = 4^{-n};$$
$$ \int_{\{n\}} \int_{\{0\}} dm_y(x) d\nu(y) = 4^{-n} \nu(0) = 4^{-n}.$$
Now, we can construct the example. Let $\Omega = \mathbb{N}$. Then $\partial_m \Omega = \{0 \}$, the measure $\nu$ is ergodic and we can go from any two given subsets of positive measure in $X$ (even between any two points) by making two jumps with positive probability; we may say that the $m$-diameter $\text{diam}_m(X)$ is finite and equals $2$. Let us take $\psi \equiv 0$. Then
$$ \int_\Omega |u(x)|^q d\nu(x) = \sum_{n = 1}^\infty 2^{-n} |u(n)|^q.$$
On the other hand, we have
$$ \int_\Omega \int_{\Omega_m} |u_\psi(y) - u(x)|^q dm_x(y) d\nu(x) = \sum_{n = 1}^\infty |u(0) - u(n)|^q \cdot 2^{-n} \cdot 2^{-n} = \sum_{n = 1}^\infty 4^{-n} |u(n)|^q.$$
Hence, a Poincar\'e inequality cannot hold, as we can see for instance from the sequences of the form
$$ (1, 2^{\frac{1}{q}}, 4^{\frac{1}{q}}, ..., 2^{\frac{k}{q}}, 0, 0, 0, ...).$$
The left hand side of the Poincar\'e inequality equals $\lambda k$; the right hand side equals $\sum_{n = 1}^k 4^{-n} \cdot 2^n \leq 1$. As we let $k \rightarrow \infty$, we see that the inequality fails.}
\end{example}

\begin{example}{\rm
Let $X = \{ 2,3,4,...\} \times \{ -1, 1 \}$. For simplicity, we will denote $k := (k,-1)$ and $\overline{k} = (k,1)$. We see it as a graph with vertices in $X$ and with the following edges: \\
$\bullet$ horizontally: from $3n$ to $3n + 1$ with weight $2^{-n}$; the same for $\overline{3n}$ and $\overline{3n+1}$; \\
$\bullet$ horizontally: from $3n+1$ to $3n+2$ with weight $4^{-n}$; the same for $\overline{3n+1}$ and $\overline{3n+2}$; \\
$\bullet$ horizontally: from $3n+2$ to $3n+3$ with weight $2^{-n}$; the same for $\overline{3n+2}$ and $\overline{3n+3}$; \\
$\bullet$ vertically: from $3n$ to $\overline{3n}$ with weight $8^{-n}$; \\
$\bullet$ vertically: from $3n+1$ to $\overline{3n+1}$ with weight $8^{-n}$; \\
$\bullet$ vertically: from $3n+2$ to $\overline{3n+2}$ with weight $8^{-n}$.
	
We set $\nu$ and $m_x$ as for graphs. Then $\nu(k)$ is the degree of the vertex $k$ and we have
$$ \nu(3n) = \nu(\overline{3n}) = d_{3n} = 2^{-n+1} + 2^{-n} + 8^{-n};$$
$$ \nu(3n+1) = \nu(\overline{3n+1}) = d_{3n+1} = 2^{-n} + 4^{-n} + 8^{-n};$$
$$ \nu(3n+2) = \nu(\overline{3n+2}) = d_{3n+2} = 2^{-n} + 4^{-n} + 8^{-n}.$$
	
We set $\Omega = \{ 2,3,4,...\} \times \{ -1 \}$; then $\partial_m \Omega = \mathbb{N} \times \{ 1 \}$. We set $\psi \equiv 0$. Now, suppose that a $q-$Poincar\'e inequality holds. Let us fix $k > 1$ and plug in $u_k \in L^1(X, \nu)$ of the form
$$ u_k(n) = 0 \quad \text{for } n \leq 3k+1, \qquad u_k(n) = 1 \quad \text{for } n \geq 3k+2.$$
The Poincar\'e inequality on graphs is
$$ \lambda\sum_{x \in \Omega} d_x |u(x)|^q \leq \sum_{x \in \Omega} \sum_{y \in \Omega_m} w_{xy} |u_\psi(y) - u(x)|^q + \sum_{y \in \partial_m \Omega} d_y |\psi(y)|^q,$$
hence for such $u_k$ the Poincar\'e inequality reads
$$ LHS = \lambda (2^{-k} + 4^{-k} + 8^{-k}) + \lambda \sum_{n = k+1}^\infty 2 (2^{-n} + 4^{-n} + 8^{-n}) + (2^{-n+1} + 2^{-n} + 8^{-n})$$
and
$$ RHS = 4^{-k} + 4^{-k} + 8^{-k} + \sum_{n=k+1}^\infty 8^{-n}. $$
Hence
$$ \lambda 2^{-k} \leq LHS \leq RHS \leq 3 \cdot 4^{-k}.$$
As we pass with $k \rightarrow \infty$, we see that the Poincar\'e inequality cannot hold for any $\lambda > 0$.}
\end{example}

Of course, failure of the Poincar\'e inequality is due to the fact that $m_x(\partial_m \Omega)$ is unbounded from below. Moreover, these examples also show that when proving the inequality, ergodicity of the random walk $m$ is not sufficient and we cannot work with sets of the form $B_1 = \{ x \in \Omega: m_x(\partial_m\Omega) \geq c \}$ or $B_1 = \{ x \in \Omega: m_x(\partial_m \Omega) > 0 \}$ in place of $B_1 = \{ x \in \Omega: d(x,\partial_m \Omega) \leq \frac{\varepsilon}{2}\}$. This suggests that in order for the Poincar\'e inequality to hold the random walk $m$ has to bear some relation to the topology of the space.

Finally, we take on the question of necessity of Poincar\'e inequality for existence of solutions in the sense of Definition \ref{Defi.1.var}.  In the following Example, we show that if the set $\Omega$ does not support a nonlocal Poincar\'e inequality, then there may be no solutions in the sense of Definition \ref{Defi.1.var}.

\begin{example}{\rm
Let us take the space $X$ introduced in the previous Example and take $\Omega$ as above. Take $u \in BV_m(\Omega)$ and suppose that it satisfies Definition \ref{Defi.1.var} for some $\psi \in L^1(\partial_m \Omega)$. Since the measure $\nu$ is discrete, all properties in Definition \ref{Defi.1.var} are checked pointwise. Let us look at the property \eqref{1-lapla.var} at all points in $\Omega$. Taking $x = 3k+1$, we have
\begin{equation}
0 = \nu(3k+1) \int_{\Omega_m} g(3k+1,y) \, dm_{3k+1}(y) = 2^{-k} g(3k+1,3k) +
\end{equation}
\begin{equation}
+ 4^{-k} g(3k+1,3k+2) + 8^{-k} g(3k+1,\overline{3k+1}).
\end{equation}
We rewrite this as
\begin{equation}\label{eq:finalexample1}
g(3k+1,3k) = - 2^k 4^{-k} g(3k+1,3k+2) - 2^k 8^{-k} g(3k+1,\overline{3k+1}).
\end{equation}
Since $\| g \|_\infty \leq 1$, for $k \geq 1$ we have that $|g(3k+1,3k)| < 1$. Hence, property \eqref{1-lapla2.var} implies that $u(3k+1) = u(3k)$. Similarly, we take $x = 3k-1$ (for $k \geq 2$) and get
\begin{equation}
0 = \nu(3k-1) \int_{\Omega_m} g(3k-1,y) \, dm_{3k-1}(y) = 2^{-k+1} g(3k-1,3k) +
\end{equation}
\begin{equation}
+ 4^{-k+1} g(3k-1,3k-2) + 8^{-k+1} g(3k-1,\overline{3k-1}).
\end{equation}
We rewrite this as
\begin{equation}\label{eq:finalexample2}
g(3k-1,3k) = - 2^{k-1} 4^{-k+1} g(3k-1,3k-2) - 2^{k-1} 8^{-k+1} g(3k-1,\overline{3k-1}).
\end{equation}
Since $\| g \|_\infty \leq 1$, for $k \geq 1$ we have that $|g(3k-1,3k)| < 1$. Hence, property \eqref{1-lapla2.var} implies that $u(3k-1) = u(3k)$. Finally, we take $x=3k$ (for any $k$) and get
\begin{equation}
0 = \nu(3k) \int_{\Omega_m} g(3k,y) \, dm_{3k}(y) = 2^{-k+1} g(3k,3k-1) +
\end{equation}
\begin{equation}
+ 2^{-k} g(3k,3k+1) + 8^{-k+1} g(3k,\overline{3k}).
\end{equation}
We rewrite this as
\begin{equation}\label{eq:finalexample3}
g(3k,3k-1) = - 2^{k-1} 2^{-k} g(3k,3k+1) - 2^{k-1} 8^{-k} g(3k,\overline{3k}).
\end{equation}
In particular, for $k=1$ we have $|g(3,2)| < 1$, so $u(2) = u(3)$. We will combine all these estimates by using the antisymmetry of the function $g$. Specifically, we take equations \eqref{eq:finalexample2} and \eqref{eq:finalexample3} and notice that their left hand side are equal up to the change of the sign. Hence, we have
\begin{equation}
- 2^{k-1} 4^{-k+1} g(3k-1,3k-2) - 2^{k-1} 8^{-k+1} g(3k-1,\overline{3k-1}) = g(3k-1,3k) =
\end{equation}
\begin{equation}
= - g(3k,3k-1) = 2^{k-1} 2^{-k} g(3k,3k+1) + 2^{k-1} 8^{-k} g(3k,\overline{3k}).
\end{equation}
We divide by $2^{k-1}$ and rearrange to obtain that
\begin{equation}
4^{-k+1} g(3k-1,3k-2) = - 2^{-k} g(3k,3k+1) - 8^{-k} g(3k,\overline{3k}) - 8^{-k+1} g(3k-1,\overline{3k-1}).
\end{equation}
Now, divide equation \eqref{eq:finalexample1} by $2^{-k}$. Using the antisymmetry of $g$, we plug it into the equation above. We obtain
\begin{equation}
4^{-k+1} g(3k-1,3k-2) = 4^{-k} g(3k+1,3k+2) + 8^{-k} g(3k+1,\overline{3k+1}) +
\end{equation}
\begin{equation}
- 8^{-k} g(3k,\overline{3k}) - 8^{-k+1} g(3k-1,\overline{3k-1}),
\end{equation}
which we then rearrange to
\begin{equation}
g(3k+1,3k+2) = 4 g(3k-1,3k-2) - 4^k 8^{-k} (g(3k+1,\overline{3k+1}) +
\end{equation}
\begin{equation}
+ g(3k,\overline{3k}) + 8 g(3k-1,\overline{3k-1})).
\end{equation}
Assume that $g(3k-1,3k-2) = \pm 1$. Then, since $\| g \|_\infty \leq 1$, for $k \geq 2$ we have that $|g(3k+1,3k+2)| \geq 4 - 10 \cdot 4^k 8^{-k} \geq \frac32$, contradiction with $\| g \|_\infty \leq 1$. Hence, for $k \geq 2$ we have $u(3k-1) = u(3k-2)$.

We use all the obtained equalities to see that $u$ is constant on $\Omega$; for any $k$, we have $u(3k-1) = u(3k) = u(3k+1)$. For $k \geq 2$, we also have $u(3k-1) = u(3k-2)$; but this covers all points of this form in $\Omega$, since for $k=1$ there is no corresponding point in $\Omega$. Moreover, we can easily compute the value of this constant function. Let us take in \eqref{1-lapla.var} the point $x = 2$. We obtain
\begin{equation}
0 = \nu(2) \int_{\Omega_m} g(2,y) \, dm_{2}(y) = g(2,3) + g(2,\overline{2}).
\end{equation}
Assume that $g(2,\overline{2}) = \pm 1$; then we have $g(2,3) = \mp 1$. Now, we take in \eqref{1-lapla.var} the point $x = 3$, so by \eqref{eq:finalexample3} and antisymmetry of $g$ we obtain
\begin{equation}
|g(2,3)| = |-g(3,2)| = |\frac12 g(3,4) + \frac18 g(3,\overline{3})| \leq \frac58,
\end{equation}
contradiction. Hence, $|g(2,\overline{2})| < 1$, so property \eqref{1-lapla2.var} implies that $u \equiv u(2) = u(\overline{2})$.

Finally, we see that this means that the nonlocal least gradient problem in the sense of Definition \ref{Defi.1.var} has no solution in $\Omega$. Assume that a solution $u$ exists. By using the antisymmetry of $g$ and taking property \eqref{1-lapla.var} at points $x = 3k, 3k+1, 3k+2$ we get respectively
$$g(3k, 3k+1) = 2g(3k-1,3k) - 4^{-k} g(3k, \overline{3k});$$
$$g(3k+1, 3k+2) = 2^k g(3k,3k+1) - 2^{-k} g(3k+1, \overline{3k+1});$$
$$g(3k+2, 3k+3) = 2^{-k} g(3k+1,3k+2) - 4^{-k} g(3k+2, \overline{3k+2}).$$
We iterate these results to obtain
$$g(3k+2, 3k+3) = 2 g(3k-1,3k) - 4^{-k} (g(3k, \overline{3k}) + g(3k+1, \overline{3k+1}) + g(3k+2, \overline{3k+2}))$$
and similar formulas for the other cases. Assume that $g(3k+2,3k+3) \neq 0$ for some $k$; take $N$ large enough so that $|g(3k+2,3k+3)| \geq \frac{1}{2^{N-2}}$. Then, by the iterative formula above, we have
$$|g(3k+3N+2, 3k+3N+3)| \geq 2^N |g(3k+2,3k+3)| - 3 \sum_{n=k+1}^\infty 4^{-k+1} \geq 4 - 1 = 3,$$
contradiction. Hence, for all $k$ we have $g(3k+2,3k+3) = 0$. We deal similarly with other cases and see that $g(n,n+1) = 0$ for all $n$; however, if we take nonconstant boundary data $\psi$, since the solution is constant, this contradicts property \eqref{1-lapla2.var}. Hence, there are no solutions to the nonlocal least gradient problem in the sense of Definition \ref{Defi.1.var}.}
\end{example}

In this Example, the reason that solutions did not exist is not only the failure of the Poincar\'e inequality, but also the fact that the connections between $\Omega$ and $\partial_m \Omega$ have very low weights, so $\Omega$ and $\partial_m \Omega$ are effectively disconnected if one tries to minimize the functional $\mathcal{J}_\psi$ and the solution is forced to be constant. This phenomenon does not appear in Example \ref{example:markovchain}, where we can compute the solutions by hand.  Moreover, we extensively used antisymmetry of $g$; we point out that there exist minima of the functional $\mathcal{J}_\psi$, solutions in the sense of functions of $m$-least gradient, and functions that satisfy the $m$-median value property. The thing that fails is the Euler-Lagrange characterisation of these objects, for which we need a Poincar\'e inequality.

	\bigskip
	\noindent {\bf Acknowledgments.} The first author has been partially supported by the research project no. 2017/27/N/ST1/02418 funded by the National Science Centre, Poland, and by Integrated Development Programme of the University of Warsaw, co-financed by the European Social Fund via Operational Programme Knowledge Education Development 2014-2020, path 3.5. The first author also wishes to thank Universitat de Val\`encia for their hospitality. The second author has been partially supported by the Spanish MCIU and FEDER, project PGC2018-094775-B-100.


\begin{thebibliography}{00}
%

	\bibitem{ADiM} L. Ambrosio and S. Di Marino, {\it Equivalent definition of BV spaces and total variation on metric measures spaces}. J. Funct. Anal. {\bf 266} (2014), 4150--4188.
		
		\bibitem{AFP}
		{L. Ambrosio, N. Fusco and D. Pallara, }
		{Functions of Bounded Variation
			and Free Discontinuity Problems}.
		Oxford Mathematical Monographs, 2000.
	%
%
%
	%
%
		
		
		\bibitem{AMRT1} F. Andreu, J. M. Maz\'{o}n, J. D. Rossi and J. Toledo, {\it A nonlocal $p$-laplacian evolution equation with a nonhomogeneus Dirichlet boundary conditions}. SIAM J. Math.Anal. {\bf 40} (2009), 1815--1851.
		
		\bibitem{ElLibro} F. Andreu-Vaillo, J. M. Maz\'{o}n, J. D. Rossi and J. Toledo,       Nonlocal Diffusion
		Problems.   Mathematical Surveys and Monographs, vol. 165.
		AMS, 2010.
		%
%
		
		%
		
		
		\bibitem{BdGG} E. Bombieri, E. De Giorgi and E. Giusti,
		{\it Minimal cones and the Bernstein problem}.
		Invent. Math. {\bf 7}, 243--268 (1969).
		
%
	%
%
%
%
%
%
		%
		%
		%
		%
		%
		%
		%
		
		%
		%
		%
		%
		
		%
		%
		%
%
%
		
\bibitem{GGOB} Y. van Gennip, N. Guillen, B. Osting and A, Bertozzi, {\it Mean Curvature, Threshold Dynamic, and Phase Field Theory on Finite Graphs} Milan J. Math. {\bf 82} (2014), 3--65.

\bibitem{Gor} W. G\'{o}rny, {\it Planar least gradient problem: existence, regularity and anisotropic case.} Calc. Var. Partial Differ. Equ. {\bf 57} (4), Art. 98 (2018).

\bibitem{Gor2} W. G\'{o}rny, {\it (Non)uniqueness of minimizers in the least gradient problem.} J. Math. Anal. Appl. {\bf 468} (2018),  913--938.

\bibitem{JMN} R.L. Jerrard, A. Moradifam and A.I. Nachman, {\it  Existence and uniqueness of minimizers of general least
gradient problems.} J. Reine Angew. Math. {\bf 734} (2018), 71--97.
		
\bibitem{Haj} P. Hajlasz, {\it Sobolev spaces on metric-measure spaces, Heat kernels and analysis on manifolds, graphs, and metric spaces} (Paris, 2002), 173–218, Contemp. Math., 338, Amer. Math. Soc., Providence, RI, 2003.
		
%
\bibitem{HLL} O. Hern\'{a}ndez-Lerma  and J.\,B. Laserre, {\it Markov Chains and Invariant Probabilities}.  Birkh\"{a}user Verlag, Basel, 2003.
%
%
		
	%
\bibitem{MMS} N. Marola, M. Miranda Jr. and N. Shanmugalingam, {\it Characterizations of Sets of Finite Perimeter Using Heat Kernels in Metric Spaces}. Potential Anal. {\bf 45} (2016), 609--633.
		
		
\bibitem{MazPeRoTo} J. M. Maz\'{o}n, M. Perez-LLanos, J. D. Rossi and J. Toledo. {\it A nonlocal $1$-laplacian problem and median values}. Publ. Mat. {\bf 60} (2016), 27-53.
		
\bibitem{MazRoSe} J. M. Maz\'{o}n, J. D. Rossi and S. Segura de Leon. {\it Functions of Least Gradient  and $1$-Harmonic functions}. Indiana Univ. Math. J. {\bf 63} (2014), 1067--1084.
	
\bibitem{MRT1}  J. M. Maz\'{o}n, J. D. Rossi and J. Toledo, {\it Nonlocal Perimeter, Curvature and Minimal Surfaces for Measurable Sets}. Journal D'Analyse Mathematique {\bf 138} (2019), 4917-4976.
		
\bibitem{MRTLibro}  J. M. Maz\'{o}n, J. D. Rossi and J. Toledo, {\it Nonlocal Perimeter, Curvature and Minimal Surfaces for Measurable Sets}.  Frontiers in Mathematics, Birkh\"auser. 2019. DOI: 10.1007/978-3-030-06243-9
		
\bibitem{MST0} J. M. Maz\'{o}n, M. Solera and J. Toledo, {\it The Heat Flow  on Metric Random Walk Spaces}. Journal of Mathematical Analysis and Applications {\bf  483} (2020), 123645.

\bibitem{MST1} J. M. Maz\'{o}n, M. Solera and J. Toledo, {\it The Total Variation Flow in  Metric Random Walk Spaces}. To appear in Calc. Var. Partial Differ. Equ.
		
\bibitem{Mir} M. Miranda, {\it Comportamento delle successioni convergenti di frontiere minimali}. Rend. Semin. Mat. Univ. Padova {\bf 38} (1967), 238--257.
		
\bibitem{Miranda1}     M., Jr., Miranda, {\it Functions of bounded variation on “good” metric spaces.}	J. Math. Pures Appl. {\bf 82} (2003), 975–-1004.
		
		
		
\bibitem{Moradifam} A. Moradifam, {\it Existence and structure of minimizers of least gradient problems.} Indiana Univ. Math. J. {\bf 67} (2018), 1025--1037.

\bibitem{MNT} A. Moradifam,  A. Nachman,and A. Tamasan, {\it Conductivity imaging from one interior measurement in the
presence of perfectly conducting and insulating inclusions.} SIAM J. Math. Anal. {\bf 44} (2012), 3969--3990.
	
\bibitem{NTT} A. Nachman, A. Tamasan and  A. Timonov, {\it  Conductivity imaging with a single measurement of boundary
and interior data.} Inverse Probl. {\bf 23} (2007), 2551--2563.
	
\bibitem{O}  Y. Ollivier, {\it Ricci curvature of Markov chains on metric spaces.} J. Funct. Anal. {\bf 256} (2009), 810--864.
		
\bibitem{SWZ} P. Sternberg, G. Williams and W. P. Ziemer, {\it Existence, uniqueness, and regularity for functions of least gradient}. J. Reine Angew. Math. {\bf 430} (1992), 35--60.
		
\bibitem{SZ1} P. Sternberg and W. P. Ziemer, {\it The Dirichlet problem for functions of least gradient}. Ni, Wei-Ming (ed.) et al., Degenerate diffusions. Proceedings of the IMA workshop, held at the University of Minnesota, MN, USA, from May 13 to May 18, 1991. New York: Springer-Verlag. IMA Vol. Math. Appl. 47, 197--214, 1993.
		
		
		
		
	\end{thebibliography}
\end{document}